\def\dd{\mathrm{d}}
\def\rr{{\mathbb R}}
\def\rn{{{\rr}^n}}
\def\nn{{\mathbb N}}
\def\cb{{\mathcal B}}
\def\fz{\infty}
\def\dist{{\mathop\mathrm{\,dist\,}}}
\def\loc{{\mathop\mathrm{\,loc\,}}}
\def\lip{{\mathop\mathrm{\,Lip}}}
\def\lz{\lambda}
\def\dz{\delta}
\def\ez{\epsilon}
\def\kz{\kappa}
\def\bz{\beta}
\def\gz{{\gamma}}
\def\vz{\varphi}
\def\wz{\widetilde}
\def\bint{{\ifinner\rlap{\bf\kern.35em--}
\int\else\rlap{\bf\kern.45em--}\int\fi}\ignorespaces}
\def\bbint{{\ifinner\rlap{\bf\kern.35em--}
\hspace{0.078cm}\int\else\rlap{\bf\kern.45em--}\int\fi}\ignorespaces}
\def\diam{{\mathop\mathrm{\,diam\,}}}
\newtheorem{thm}{Theorem}[section]
\newtheorem{lem}[thm]{Lemma}
\newtheorem{rem}[thm]{Remark}
\newtheorem{cor}[thm]{Corollary}
\numberwithin{equation}{section}
\begin{document}

\arraycolsep=1pt

\title{\Large\bf A global second order Sobolev regularity for
 $p$-Laplacian type equations with  variable coefficients in bounded domains
 \footnotetext{\hspace{-0.35cm}
\endgraf The first author is supported by NSFC (No.12001041 \& No.11871088 \& No. 12171031)
and Beijing Institute of Technology Research Fund Program for Young Scholars.
 The second author is supported by China Postdoctoral Science Foundation funded project
 (No. BX20220328). The third author is supported by NSFC (No. 11871088 \& No.12025102) and by the Fundamental Research Funds for the Central Universities.
\endgraf }
}
\author{Qianyun Miao, Fa Peng,  Yuan Zhou}
\date{ }

\maketitle

\begin{center}
\begin{minipage}{13.5cm}\small
{\noindent{\bf Abstract.}\quad
Let $\Omega\subset\rr^n$ be a bounded convex domain  with $n\ge2$.
Suppose that  $A$ is uniformly elliptic and belongs to $W^{1,n}$ when $n\ge 3$  or $W^{1,q}$ for some $q>2$  when $n=2$.
For $1<p<\fz$,  we build up a global second order regularity estimate
$$\|D[|Du|^{p-2} Du]\|_{L^2(\Omega)}+\|D[ |\sqrt{A}Du|^{p-2} A Du]\|_{L^2(\Omega)} \le C \|f\|_{L^2(\Omega)} $$
for  inhomogeneous $p$-Laplace type equation
\begin{equation*}
  -\mathrm{div}\big(\langle A Du,Du\rangle ^{\frac{p-2}2} A Du\big)=f  \quad\rm{in }\ \Omega \mbox{ with Dirichlet/Neumann $0$-boundary.}
\end{equation*}
Similar result was also built up for certain bounded Lipschitz domain whose boundary is weakly second order differentiable  and satisfies some smallness assumptions.

}

\end{minipage}
\end{center}


\section{Introduction}\label{s1}

We first recall the $L^2$-integrability of second order derivatives (also called the Hessian or Calderon-Zygmund estimate)
 for the Poisson equation in bounded convex domains and its extension to divergence type elliptic equations;
for example \cite{ADN59,Grisvard, Horm63,LadU68,MazS09,A92} and references therein.
For these and also their  extension to more non-smooth domains we refer to for example \cite{ADN59,Grisvard, Horm63,LadU68,MazS09,A92}
and references therein.
To be precise, let $\Omega\subset\rn$  with $n\ge2$ be any  bounded convex domain.
Given  any $f\in L^2(\Omega)$,  the uniqueness  weak solution $u$ to the Poission equation  $$\mbox{$ \Delta u:=-{\rm div}(Du)=f$ in   $\Omega $} $$
 with Dirichlet $0$-boundary $u|_{\partial\Omega}=0$ satisfies
  \begin{equation}\label{secg}     u\in W^{2,2}(\Omega)\quad\mbox{and}\quad\|D^2u\|_{L^{2}(\Omega)}\le C  \|f\|_{L^2(\Omega)}
 \end{equation}
 for some constant $C$ depending on $n$.
Note that \eqref{secg}   also holds whenever $f\in L^2(\Omega)$ 
with $\int_\Omega f(x)\,dx=0$, and
$u$ is any weak solution to $\Delta u=f$ in $\Omega$ with Neumann $0$-boundary  $Du\cdot \nu|_{\partial\Omega}=0$.
Here and below  $\nu$ always denotes the outer  normal direction of the boundary $\partial\Omega$.
Regards of the Neumann $0$-boundary, the condition $\int_\Omega f(x)\,dx=0$ is  necessary  to  the existence of weak solutions,
and weak solutions are unique up to some additive constant.

%
%

Moreover, consider 
 the inhomogeneous  elliptic equation \begin{equation}\label{LA} \mbox{ $\mathcal L_Au:=-{\rm div}\, (A Du) = f$ in    $\Omega$},
\end{equation}
where and below we always suppose that
$A:\Omega\to\rr^{n\times n}$ is a symmetric  matrix-valued function satisfying the elliptic condition
\begin{equation}\label{ell} \frac1{L}|\xi|^2\le \langle A(x)\xi,\xi\rangle\le L|\xi|^2\quad\forall x\in\Omega,\quad \forall \xi\in\rr^n
\end{equation}
for some $L>1$; for short we write $A\in\mathcal E_L(\Omega)$ below. Under  the integrability condition for the distributional derivative $DA$:
\begin{equation} \label{sobA}\mbox{
$D A\in L^{ q} (\Omega, \rr^{n\times n})$ for some $q\ge n$ when $n\ge3$ or $q>n$ when $n=2$,}
\end{equation}
it was  shown  that  
 \eqref{secg} also holds, for some constant $C$ depending on $n, \Omega, q,L$   and $DA$, whenever
  $f\in L^2(\Omega)$  and $u$ is a weak solution to \eqref{LA} with Dirichlet
 $0$-boundary, or whenever   $f\in L^2(\Omega)$ with $\int_\Omega f(x)\,\dd x=0$ and $u$   is a weak solution to \eqref{LA}
   with Neumann $0$-boundary $ADu\cdot\nu|_{\partial\Omega}=0 $. 



Recently, Cianchi-Mazya \cite{CMaz} built up  a nonlinear version of above Hessian or Calderon-Zygmund estimate
 for the inhomogeneous $p$-Laplace equation
\begin{equation}\label{plap}\mbox{$  \Delta_pu:= -\mathrm{div}\left(|Du|^{p-2} Du\right)=f$ in 
$\Omega $}, \end{equation}  where $1<p<\infty$. Indeed, they proved
\begin{equation}\label{czp} |Du|^{p-2}Du\in W^{1,2}(\Omega)\quad\mbox{with the norm  bound}\quad
\|D[|Du|^{p-2}Du]\|_{L^2(\Omega)}\le C\|f\|_{L^2(\Omega)}
\end{equation}
for some  constant $C$ depending on $n,p,\Omega$, whenever
    $f\in L^2(\Omega)$ and $u$ is a generalized solution  to \eqref{plap} with   $u|_{\partial\Omega}=0$ or whenever   $f\in L^2(\Omega)$ with $ \int_\Omega f(x)\,\dd x=0$ and $u$ is a generalized solution  to \eqref{plap} with  $ Du\cdot\nu|_{\partial\Omega}=0$.
Recall that when $p\ne2$,  $\Delta_pu=f$ is a quasilinear degenerate/singular elliptic equation.
Note that when $1<p<2$, since $f\in L^2(\Omega)$ implies $f\in L^p(\Omega)$, the generalized solutions coincides with weak solutions;
  when $2<p<\fz$,  since $f\in L^2(\Omega)$ does not guarantee  $f\in L^p(\Omega)$, one can not define weak solutions via integration by parts,
the it is natural to work with generalized solutions; 
for more about generalized solutions see \cite{CMaz2017,CMaz}. These results were extended by \cite{CMaz2019,bcdm} to vector-valued case.

In this paper we consider the
 inhomogeneous $p$-Laplace type equation
\begin{equation}\label{p-Lap-sys}
\mathcal L_{A,p}u:=
  -\mathrm{div}\big(\langle A Du,Du\rangle ^{\frac{p-2}2} A Du\big)=f\quad\rm{in }\ \Omega,
\end{equation}
where the coefficient    $A\in \mathcal E_L(\Omega)$.  
Motivated by  above Hessian or Calderon-Zygmund estimates  
and their nonlinear version, it is  natural to ask,  under  the condition \eqref{sobA},
 whether   some similar global second order regularity  holds for generalized solutions to  \eqref{p-Lap-sys} with Dirichlet/Neumann 0-boundary.


 The main purpose of this paper is to answer the above question as below.

\begin{thm}\label{thm:convex}
Let $\Omega$ be a bounded convex domain of $\rn$ with $n\ge2$. Let $1<p<\fz$.
Suppose that  $A\in \mathcal E_L(\Omega)$ for some $L>1$  and   satisfies \eqref{sobA}. 

If $f\in L^{2}(\Omega)$ and $u$ is a generalized solution  to
 \eqref{p-Lap-sys} with $u|_{\partial\Omega}=0$, or if $f\in L^{2} (\Omega)$ with $\int_\Omega f(x)\,\dd x=0$ and $u$ is a generalized solution  to
 \eqref{p-Lap-sys} with $ADu\cdot\nu|_{\partial\Omega}=0$,  then
 we have
\begin{equation}\label{2nd-reg}
\left\{
\begin{split}&
\mbox{$|\sqrt{A}Du|^{p-2}ADu\in W^{1,2}(\Omega)$ and $| Du|^{p-2} Du\in W^{1,2}(\Omega)$
with   norm bounds }\\
& \quad\quad
 \|D[|\sqrt{A}Du|^{p-2}ADu]\|_{L^2(\Omega)}+ \|D[| Du|^{p-2} Du]\|_{L^2(\Omega)}\le C\|f\|_{L^{2}(\Omega)},
\end{split}\right.
\end{equation}
where   $C>0$ is a constant independent of $u$ and $f$.
\end{thm}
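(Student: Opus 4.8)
The plan is to follow the by-now standard route for such nonlinear Calderón–Zygmund estimates: first establish an a priori estimate for smooth solutions of a regularized, non-degenerate equation on a smooth approximating domain, and then remove the regularization by approximation. Write $\cb(\xi):=\langle A\xi,\xi\rangle^{(p-2)/2}A\xi$ for the (degenerate) nonlinearity and $\mathbf{H}(\xi):=|\sqrt A\xi|^{p-2}\sqrt A\xi$, so that $\cb(\xi)=\sqrt A\,\mathbf H(\xi)$ and, by the elliptic condition \eqref{ell}, $|\mathbf H(\xi)|\sim|\xi|^{p-1}$ and $\mathbf H$ is a monotone vector field with the usual structure constants depending only on $n,p,L$. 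For $\ez>0$ replace $\langle A\xi,\xi\rangle^{(p-2)/2}$ by $(\langle A\xi,\xi\rangle+\ez^2)^{(p-2)/2}$, replace $A$ by a smooth mollification $A_\ez$ still satisfying \eqref{ell} with $DA_\ez\to DA$ in $L^q$, and replace $\Omega$ by a smooth convex domain $\Omega_\ez$ increasing to $\Omega$ (for the Lipschitz case, by smooth domains whose second fundamental form and the relevant smallness quantities are controlled uniformly). For the resulting solutions $u_\ez\in C^\infty(\overline{\Omega_\ez})$ the whole argument is classical differential calculus; the crux is a uniform-in-$\ez$ bound.

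The heart of the matter is the differentiated Bochner/Reilly-type identity. Differentiating the regularized equation, testing against $D_k u_\ez$ summed over $k$, and integrating by parts produces, schematically,
\begin{equation}\label{proposal:bochner}
\int_{\Omega_\ez}\big|D\big[\mathbf H_\ez(\sqrt{A_\ez}Du_\ez)\big]\big|^2\,\dd x
\;\lesssim\;\int_{\Omega_\ez}|f|^2\,\dd x
\;+\;\int_{\Omega_\ez}|DA_\ez|^2|Du_\ez|^{2(p-1)}\,\dd x
\;+\;\mathcal{BD},
\end{equation}
where $\mathcal{BD}$ is a boundary term. On the convex domain the boundary term carries a favorable sign: for the Dirichlet problem it is (a multiple of) $-\int_{\partial\Omega_\ez}\mathrm{II}(\tau,\tau)\,|\partial_\nu u_\ez|^{p}$-type quantity with $\mathrm{II}\ge0$, and analogously for the Neumann problem, so $\mathcal{BD}\le0$ and may be dropped — exactly as in the linear case \eqref{secg} and in Cianchi–Mazya. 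For the Lipschitz domain one does not get a sign but instead an estimate $|\mathcal{BD}|\lesssim \delta\int|D^2 u_\ez|\cdots$ that is absorbed using the smallness hypothesis on $\partial\Omega$, plus a standard trace/interpolation argument. This step also needs the elementary pointwise comparison $|D[|Du|^{p-2}Du]|\lesssim|D[\mathbf H(\sqrt A Du)]|+|DA||Du|^{p-1}$, which follows from \eqref{ell} and lets one transfer the estimate to the second quantity in \eqref{2nd-reg}.

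The remaining difficulty is the first-order coefficient term $\int|DA_\ez|^2|Du_\ez|^{2(p-1)}$ on the right of \eqref{proposal:bochner}. Since $DA\in L^n$ (or $L^q$, $q>2$, when $n=2$), one bounds it by Hölder and the Sobolev embedding applied to $\mathbf H_\ez(\sqrt{A_\ez}Du_\ez)\in W^{1,2}$: writing $v_\ez:=\mathbf H_\ez(\sqrt{A_\ez}Du_\ez)$ one has $|Du_\ez|^{p-1}\sim|v_\ez|$, hence
\begin{equation}\label{proposal:abs}
\int_{\Omega_\ez}|DA_\ez|^2|v_\ez|^2\,\dd x
\;\le\;\|DA_\ez\|_{L^n}^2\,\|v_\ez\|_{L^{2n/(n-2)}}^2
\;\lesssim\;\|DA_\ez\|_{L^n}^2\,\|Dv_\ez\|_{L^2}^2
\;+\;\text{lower order},
\end{equation}
and the gradient term is absorbed into the left side of \eqref{proposal:bochner} — this is where the precise Sobolev exponent $q=n$ is used and where, for large $\|DA\|_{L^n}$, one must first localize (a covering argument on balls where $\|DA\|_{L^n}$ is small, patched with a Caccioppoli/cutoff argument) rather than absorb globally in one step; the $n=2$ case uses $L^q$ with $q>2$ together with $W^{1,2}\hookrightarrow L^s$ for all $s<\infty$. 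The "lower order" remainder, together with the left side after absorption, is then controlled by $\|f\|_{L^2}^2$ plus $\|Du_\ez\|_{L^p}^p$, and the latter is handled by the basic energy estimate for \eqref{p-Lap-sys} (test with $u_\ez$), giving $\|Du_\ez\|_{L^p}\lesssim\|f\|_{L^2}$ after using the Sobolev/Poincaré inequality. Finally, letting $\ez\to0$: the uniform bound gives weak $W^{1,2}$-limits of $\mathbf H_\ez(\sqrt{A_\ez}Du_\ez)$ and $|Du_\ez|^{p-2}Du_\ez$; one identifies the limits with the corresponding quantities of the generalized solution $u$ using monotonicity of $\cb$ and the uniqueness theory for generalized solutions recalled from \cite{CMaz2017,CMaz}, and lower semicontinuity of the $L^2$ norm under weak convergence yields \eqref{2nd-reg}. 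I expect the absorption step \eqref{proposal:abs} — in particular making the localization quantitative and independent of the mollification — to be the main technical obstacle, with the boundary-term analysis on Lipschitz domains a close second.
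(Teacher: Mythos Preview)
Your overall strategy matches the paper's exactly: regularize (equation, coefficients, domain), establish an a priori estimate via a pointwise Bochner/Reilly-type inequality, use convexity to drop the boundary term, absorb the $|DA|^2$ term via Sobolev (with localization when only $DA\in L^n$), and pass to the limit by weak compactness and lower semicontinuity. Two points, however, are glossed over in ways that matter.

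\textbf{The boundary term for variable $A$.} Your claim that the boundary term ``carries a favorable sign\ldots exactly as in the linear case and in Cianchi--Maz'ya'' hides the paper's main technical novelty. If one writes the divergence identity in the form that naturally arises from differentiating and testing (the one involving $\mathrm{div}(ADu)$), the resulting boundary integrand contains $DA|_{\partial\Omega}$, which is completely uncontrolled under the hypothesis $DA\in L^q(\Omega)$. The paper's Lemma~\ref{L21} and Remark~\ref{rem-K} make this explicit: one must use \emph{two different} forms of the divergence term~$\mathbf I$, namely \eqref{I1} (with $\mathrm{tr}(AD^2u)$) for the Dirichlet problem and \eqref{I2} (with $\mathrm{div}(ADu)$) for the Neumann problem. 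Only with the correct pairing does the boundary integrand reduce (Lemmas~\ref{lem:3.3.1a} and~\ref{l3.yy1}) to a purely geometric expression in $\mathcal B$ and the algebraic data of $A$, with no $DA$ appearing; convexity then gives the sign. Without this observation your Step~\eqref{proposal:bochner} does not close.

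\textbf{The lower-order term.} Your plan to bound $\|Du_\ez\|_{L^p}$ via the energy identity and Sobolev/Poincar\'e fails when $1<p<2n/(n+2)$ (nonempty for $n\ge3$), since then $W^{1,p}_0\not\hookrightarrow L^2$ and testing with $u_\ez$ does not yield a bound in terms of $\|f\|_{L^2}$ alone. The paper avoids this by invoking Cianchi--Maz'ya's $L^1$ estimate $\|(|Du|^2+\ez)^{(p-2)/2}|Du|\|_{L^1(\Omega)}\le C\|f\|_{L^1(\Omega)}$ (Corollary~\ref{L1-esti}), which holds for the full range $1<p<\infty$ and feeds directly into the Gagliardo--Nirenberg interpolation (Lemmas~\ref{lem:Veps-es1} and~\ref{cmineq2}).
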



Moreover, beyond  convex domains,
Cianchi-Mazya \cite{CMaz} proved that the above estimate \eqref{czp} holds for the equation \eqref{plap} in bounded Lipschitz domain $\Omega $ provided that
 \begin{equation}\label{exx1}\mbox{ the boundary $\partial\Omega\in W^2L^{n-1,\infty}$ when $n\ge3$
and $\partial\Omega\in W^2L^{1,\infty}\log L$ when $n=2$}\end{equation}
and that the weak second fundamental form $\mathcal{B}$ on $\partial\Omega$ satisfies
\begin{equation}\label{smallness}\lim_{r\to0}\Psi_\mathcal{B} (r) \le c\quad  \mbox{for some suitable small constant $c=c(\lip_\Omega, d_\Omega, n, p)$,}
\end{equation} where
\begin{equation}\label{B-asumm1}
   \Psi_ \mathcal{B}  (r)  :=\left\{\begin{array}{lc}\displaystyle
  \sup_{x\in\partial\Omega}\|\mathcal{B}\|_{L^{n-1,\infty}(\partial\Omega\cap B_r(x))}  \quad&\textrm{if}\quad n\geq 3,\\\displaystyle
   \sup_{x\in\partial\Omega}\|\mathcal{B}\|_{L^{n-1,\infty}\log L(\partial\Omega\cap B_{r}(x))} \quad&\textrm{if}\quad n=2.
\end{array}\right.
\end{equation}
Here $d_\Omega$ is the diameter of $\Omega$ and $\lip_\Omega$ is the Lipschitz constant of $\Omega$.
Recall that $\Omega$ is  a  Lipschitz domain, if, in a neighborhood $B(x,r) \cap\partial\Omega$ of each boundary point $x$,
$\Omega$ agrees with the subgraph $\Gamma(\phi)$ of a Lipschitz continuous function $\phi:\rr^{n-1}\to\rr$  of $(n-1)$-variables.
We say $\partial \Omega$ satisfies \eqref{exx1} if each such $\phi$ is twice weakly differentiable, and that its second-order derivatives belong to either the weak Lebesgue space $L^{n-1,\infty}$ if $n\geq 3$,
or the weak Zygmund space $L^{1,\infty}\log L$ if $n=2$. 
The assumption  \eqref{exx1} guarantee that  the weak second fundamental form $\mathcal{B}$ on $\partial\Omega$
belongs to the same weak type spaces with respect to the $(n-1)$-dimensional Hausdroff measure $\mathcal{H}^{n-1}$ on $\partial\Omega$,
and  hence    guarantee  $\lim_{r\to0}\Psi_ \mathcal B(r)<\fz$.
But   \eqref{exx1}  cannot give the smallness  of  $\Psi_ \mathcal B(r)$, that is, \eqref{smallness}.
As revealed by Cianchi-Mazya,  to get \eqref{secg} in the case $p=2$ and also \eqref{czp} for $1<p<\fz$
the smallness assumption \eqref{smallness} for $\lim_{r\to0}\Psi_ \mathcal{B}(r)$   is necessary and optimal;
\eqref{smallness}  can not be weaken to $\lim_{r\to0}\Psi_ \mathcal{B} (r)<\fz$. For more details we refer to \cite{CMaz}.


In this paper, we also show that  the convexity assumption on  $\Omega$ in Theorem 1.1 can be  reduced to the smallness  assumption \eqref{smallness} as in \cite{CMaz}.

\begin{thm}\label{thm}
Let $\Omega$ be a bounded  Lipschitz
 domain of $\rn$ with $n\ge2$ and  satisfy  \eqref{exx1}.  Let $1<p<\fz$.
Suppose that $A\in \mathcal E_L(\Omega)$ for some $L>1$  and  satisfies \eqref{sobA}. 

There exists a constant $\dz_\ast>0$ depending on $n,L$ such that if   $\lim_{r\to0}\Psi_ \mathcal B(r) \le \dz_\ast$,
then   \eqref{2nd-reg}   holds  whenever
    $f\in L^2(\Omega)$  and $u$ is a generalized  solution   to
  \eqref{p-Lap-sys} with
 $u|_{\partial\Omega}=0$ or whenever   $f\in L^2(\Omega)$ with $ \int_\Omega f(x)\,\dd x=0$ and $u$ is a generalized  solution   to
  \eqref{p-Lap-sys} with  $ ADu\cdot\nu|_{\partial\Omega}=0$.

\end{thm}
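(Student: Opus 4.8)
The plan is to deduce Theorem \ref{thm} from Theorem \ref{thm:convex} by an approximation-through-convex-domains argument, running in parallel with the strategy Cianchi-Mazya use to pass from convex domains to the class of Lipschitz domains satisfying \eqref{exx1}--\eqref{smallness}. First I would record, as in \cite{CMaz}, that any bounded Lipschitz domain $\Omega$ obeying \eqref{exx1} with $\lim_{r\to0}\Psi_{\mathcal B}(r)\le\dz_\ast$ can be exhausted from inside by a sequence of smooth domains $\Omega_k$ whose boundaries $\partial\Omega_k$ converge to $\partial\Omega$ in the relevant $W^2L^{n-1,\infty}$ (resp.\ $W^2L^{1,\infty}\log L$ when $n=2$) sense, so that $\limsup_k\limsup_{r\to0}\Psi_{\mathcal B_k}(r)\le\dz_\ast$ uniformly in $k$; the uniform smallness is exactly the hypothesis that makes all the constants in the a priori estimates independent of $k$. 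On each $\Omega_k$ one may further mollify the coefficient $A$ to get $A_k\in\mathcal E_{L'}(\Omega_k)\cap C^\infty$ with $L'$ close to $L$ and $\|DA_k\|_{L^q}$ bounded uniformly, and mollify $f$ to $f_k$ (preserving $\int f_k=0$ in the Neumann case). Let $u_k$ be the corresponding generalized (here in fact classical, by smoothness) solution of $\mathcal L_{A_k,p}u_k=f_k$ on $\Omega_k$ with the prescribed boundary condition.

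The core of the argument is the a priori estimate $\|D[|\sqrt{A_k}Du_k|^{p-2}A_kDu_k]\|_{L^2(\Omega_k)}+\|D[|Du_k|^{p-2}Du_k]\|_{L^2(\Omega_k)}\le C\|f_k\|_{L^2(\Omega_k)}$ with $C$ independent of $k$. The interior part of this estimate is purely local and follows from the interior second-order regularity machinery already used to prove Theorem \ref{thm:convex} (Cordes-type / differentiated-equation Bochner estimates, testing the differentiated equation against $D_s u$, absorbing the $DA$ terms by Sobolev embedding and the elliptic structure), so it transfers verbatim with $k$-uniform constants. The boundary part is where the domain geometry enters: one localizes near $\partial\Omega_k$, flattens the boundary with the bi-Lipschitz chart $\phi_k$, and integrates by parts twice; the boundary integrals that appear carry a factor of the second fundamental form $\mathcal B_k$, and these are controlled --- for the nonlinear quantities $\mathcal V_k:=|\sqrt{A_k}Du_k|^{p-2}A_kDu_k$ and $|Du_k|^{p-2}Du_k$ --- by $\Psi_{\mathcal B_k}(r)$ times the full second-order norm on a boundary collar, using the trace/interpolation inequality for weak-Lebesgue (resp.\ Zygmund) data on Lipschitz boundaries that Cianchi-Mazya establish. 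Choosing the collar thin and $k$ large, the prefactor is $\le C\dz_\ast$, which for $\dz_\ast$ small enough (depending only on $n,L$ through the structural constants) can be absorbed into the left-hand side. This absorption step is the main obstacle: one must keep careful track that the constant multiplying $\Psi_{\mathcal B_k}$ depends only on $n,L$ and the structure exponent $p$ --- not on $\Omega_k$, $A_k$, or $u_k$ --- which is precisely why the coefficient is only required to lie in $\mathcal E_L$ and satisfy \eqref{sobA}, the $DA$-contribution being relegated to lower-order terms handled by the interior estimate.

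Granted the $k$-uniform a priori bound, I would pass to the limit. The bound gives, along a subsequence, weak $W^{1,2}$ convergence of $\mathcal V_k$ and of $|Du_k|^{p-2}Du_k$ on every compact subset, hence (after the standard monotonicity/compactness argument that identifies the limit of $u_k$ as the generalized solution $u$ of $\mathcal L_{A,p}u=f$ on $\Omega$ with the given boundary condition --- using that generalized solutions are unique up to the additive constant in the Neumann case, and that $f_k\to f$ in $L^2$, $A_k\to A$ a.e.\ with uniform ellipticity) one concludes $|\sqrt{A}Du|^{p-2}ADu,\ |Du|^{p-2}Du\in W^{1,2}(\Omega)$ with the norm bound \eqref{2nd-reg}, the constant $C$ inherited from the a priori estimate and hence independent of $u$ and $f$. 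The only subtlety in the limiting step is to ensure the weak limits of the nonlinear vector fields coincide with the nonlinear vector fields of the limit solution; this is where one invokes a.e.\ convergence of $Du_k$ (a consequence of the $C^{1,\alpha}$ interior estimates together with the strong $L^p$ convergence of $u_k$), after which continuity of $\xi\mapsto|\sqrt{A}\xi|^{p-2}A\xi$ closes the argument. I expect no difficulty beyond bookkeeping once the domain-approximation lemma and the interior estimate from the proof of Theorem \ref{thm:convex} are in hand; the genuinely new input over \cite{CMaz} is merely verifying that the variable coefficient $A$ does not spoil the $k$-uniformity of the boundary estimate, which it does not because $DA$ enters only through Sobolev-subcritical lower-order terms.
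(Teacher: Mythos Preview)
Your overall architecture --- approximate $\Omega$ by smooth domains with uniformly small $\Psi_{\mathcal B}$, smooth out $A$ and $f$, prove a $k$-uniform a priori bound, then pass to the limit --- matches the paper's strategy, and your identification of the boundary-absorption step as the crux is correct. But there is one genuine gap and one structural misstatement.

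The gap is the missing $\epsilon$-regularization of the operator. You assert that once $A_k$, $f_k$, $\Omega_k$ are smooth the solution $u_k$ of $\mathcal L_{A_k,p}u_k=f_k$ is ``in fact classical, by smoothness''. This is false: for $p\ne 2$ the operator $\mathcal L_{A,p}$ is degenerate or singular on $\{Du=0\}$, and solutions are in general only $C^{1,\alpha}$, not $C^2$ let alone $C^3$. The pointwise Bochner-type inequality underlying both the interior and boundary estimates (Lemma~\ref{L21} here, and its analogue in \cite{CMaz}) requires $u\in C^3$. The paper therefore works with the uniformly elliptic regularization $\mathcal L_{\epsilon,A,p}u=-\mathrm{div}\big((|\sqrt A Du|^2+\epsilon)^{(p-2)/2}ADu\big)$, $\epsilon\in(0,1]$, whose solutions on smooth domains with smooth data are smooth up to the boundary; the a priori estimate is proved uniformly in $\epsilon$ (Theorem~\ref{thm:smooth}) and one sends $\epsilon\to 0$ via Lemma~\ref{approx}. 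Without this extra layer your ``differentiated-equation Bochner estimates'' have no legitimate object to act on.

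The structural misstatement is that Theorem~\ref{thm} is not deduced from Theorem~\ref{thm:convex}. Convex domains give ${\bf K}(\phi)\le 0$ for free (Lemma~\ref{boundaryterm}(i)); for general smooth domains one instead needs the quantitative bound of Lemma~\ref{boundaryterm}(ii), and the resulting a priori estimate is Theorem~\ref{thm:smooth}, not Theorem~\ref{thm:convex-smooth}. The paper's proof of Theorem~\ref{thm} then simply reruns the three-parameter limit $(k\to\infty,\ \epsilon\to 0,\ \ell\to\infty)$ from the proof of Theorem~\ref{thm:convex}, replacing Theorem~\ref{thm:convex-smooth} by Theorem~\ref{thm:smooth} and the convex approximation Lemma~\ref{appconvex} by Lemma~\ref{appdomain}. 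Note that the latter produces $\Omega\Subset\Omega_k$, i.e.\ approximation from \emph{outside}, so that $f^\ell\in C_c^\infty(\Omega)$ is automatically compactly supported in every $\Omega_k$; your exhaustion from inside would require additional work to retain the boundary condition and the support of $f$.
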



 In   the statements of Theorems \ref{thm:convex} and \ref{thm}, the annoying dependence of constant $C$ on the main parameters  is unclear.
Here we clarify them  respectively.

\begin{rem}\label{rem1.3}\rm
 (i) Regards of  the constant $C$  in Theorem \ref{thm:convex}, we consider two cases.

 \begin{enumerate}

\item[$\bullet$] In the case  $A\in \dot W^{1,q}(\Omega) $ for some   $q>n\ge2$,
the constant $C $  in Theorem 1.1  depends  only on $n,p$, $L$, $d_\Omega=\diam \Omega$, $ |\Omega|$ and $C_{{\rm ext},q}(\Omega)\|DA\|_{L^q(\Omega)}$.
In the proof, we extend $A\in \dot W^{1,q}(\Omega) $ as   $\wz A\in \dot
W^{1,q}(\rn)$ so that $ \|D\wz A\|_{L^q(\rn)}\le C_{{\rm ext},q}(\Omega)\|DA\|_{L^q(\Omega)}$,
where  $C_{{\rm ext},q}(\Omega)$ is the norm of the extension operator.

\item[$\bullet$]  In the case $A \in W^{1,n}({\Omega})$ with   $n\ge 3$,
we know that  $\lim _{r\to0} \Phi_{ A,\Omega}(r)=0$, where   $$\Phi_{A,\Omega}( r):=  \sup_{x\in\overline \Omega}\|DA\|_{L^n(B(x,r)\cap\Omega)}.$$
We  extend $A\in \dot W^{1,n}(\Omega) $ to   $\wz A\in \dot W^{1,n}(\rn)$ so that
$$\Phi_ {\wz A,\Omega^t}(r)\le  C_{{\rm ext},n}(\Omega) \Phi_ {A,\Omega }(C_{{\rm ext},n}(\Omega) r) \quad \mbox{whenever $0<t<r$},$$  where  $C_{{\rm ext},n}(\Omega)$ is a constant and
$\Omega ^t$ is the $t$-neighbourhood of $\Omega$.  To obtain   \eqref{2nd-reg}  in Theorem \ref{thm:convex}, we  require that
$ \Phi_ {A,\Omega }( C_{{\rm ext},n}(\Omega)  r_A)\le \dz_\sharp/ C_{{\rm ext},n}(\Omega) $  for  some  sufficiently small $\dz_\sharp>0$  depending on $n,p,L$  and some $r_A\in(0,1)$.
The constant $C$  in  Theorem \ref{thm:convex} depends only on $n,p,L$ $r_A$, $\lip_\Omega$,  $d_\Omega$ and $|\Omega|$.

\end{enumerate}



(ii) 
The constant $C$  in Theorem \ref{thm} not only depends on    parameters   as stated in (i) above in a similar way, but also depends on
some constant caused by the condition \eqref{smallness}, that is, depends on $1/r_\Omega$, where  $r_\Omega\in(0,1)$  satisfies
$ \Psi_{\mathcal B}(r_\Omega)\le \dz_\ast$  and $\dz_\ast$ is given in \eqref{smallness}.


%
%
%
\end{rem}


To prove Theorems \ref{thm:convex} and \ref{thm}, 
we consider some regularized equation of   \eqref{p-Lap-sys}, that is, the equation
\begin{equation}\label{app-ellip-syseps0}
 \mathcal L_{\ez,A,p}u=-\mathrm{div}\,\left((|\sqrt{A}Du|^2 + \epsilon)^{\frac{p-2}{2}} ADu\right)=f\ \rm{in }\ \Omega \mbox{ with
$u|_{\partial\Omega}=0$   or $ADu\cdot\nu|_{\partial\Omega}=0$},
\end{equation}
where $\ez\in(0,1]$ and   $\Omega,A,f$ satisfies

(S1) $A\in   \mathcal E_L(\rn)\cap  C^\fz(\rn)$,

(S2) $\Omega$ is a bounded smooth  domain,


(S3) $f\in C^\fz_c( \Omega) $ (and additionally, $\int_\Omega f(x)\,dx=0 $  in the case Neumann 0-boundary).

\noindent
Note that,  under assumptions (S1)-(S3),
  weak solutions   to the  regularized equation \eqref{app-ellip-syseps0} with Dirichlet/Neumann 0-boundary are always smooth in $\overline \Omega$.

We  establish the following global quantitative second order regularity
for the regularized equation \eqref{app-ellip-syseps0}    in Theorems \ref{thm:convex-smooth} and \ref {thm:smooth}.
From them, via an standard approximation argument  we
 conclude   Theorems \ref{thm:convex} and
\ref{thm}  (including  
Remark \ref{rem1.3}) respectively; we refer to Section 7 for details.

\begin{thm}\label{thm:convex-smooth}
Let $1<p<\fz$, $\ez\in(0,1]$, and  $\Omega$ be a bounded convex domain.
  Suppose  that  $\Omega$, $A$ and $f$ satisfy assumptions (S1)-(S3).
Let $u$ be  any weak solution    to
\eqref{app-ellip-syseps0}.

\begin{enumerate}
\item[(i)]  If $\|DA \|_{L^q(\Omega)}\le R_\sharp $ for some $q>n\ge 2$ and $0<R_\sharp<\fz$, then
  \begin{equation}\label{2nd-reg-es0}
  \|D[( Du|^2+\epsilon)^{\frac{p-2}{2}} Du]\|_{L^{ 2}(\Omega)}+ \|D[(|\sqrt{A}Du|^2+\epsilon)^{\frac{p-2}{2}}ADu]\|_{L^{ 2}(\Omega)}\leq C\|f\|_{L^2(\Omega)}
\end{equation}
with  the constant
   $C$   depending only on $n,p$, $q$, $L$ and $R_\sharp$.

\item[(ii)]  There exists a constant $\delta_\sharp>0$ depending on $n,p,L ,d_\Omega,\lip_\Omega
$ such that
if $\Phi_{A,\Omega}(r_\sharp)\le \delta_\sharp  $
for some $r_\sharp>0$, then  \eqref{2nd-reg-es0} holds with    the constant
 depending only on $n,p$, $L$ and $r_\sharp$.
\end{enumerate}
\end{thm}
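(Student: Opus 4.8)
The plan is to work with the regularized equation \eqref{app-ellip-syseps0}, where solutions are smooth up to $\overline\Omega$, and to derive the estimate via a Bochner-type (Cordes--Nirenberg) differential identity combined with a careful boundary integration by parts that exploits either convexity (here) or the smallness of the second fundamental form (in Theorem \ref{thm:smooth}). Write $V_\ez(\xi):=(|\sqrt A\,\xi|^2+\ez)^{(p-2)/2}A\xi$ and $W_\ez(\xi):=(|\xi|^2+\ez)^{(p-2)/2}\xi$, so that the equation reads $-\mathrm{div}(V_\ez(Du))=f$. The first step is to differentiate the equation in each direction $e_k$, test against $\partial_k u$ (after suitable truncation to justify everything — standard since $u\in C^\infty(\overline\Omega)$), and sum over $k$. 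After integrating by parts this produces, schematically,
\begin{equation*}
\int_\Omega \langle \partial_\xi V_\ez(Du)\,D^2u, D^2u\rangle\,dx
= \int_\Omega f\,\Delta u\,dx + (\text{terms involving } DA) + (\text{boundary terms}),
\end{equation*}
where $\partial_\xi V_\ez$ is the (positive definite, with $p$- and $\ez$-dependent but structurally controlled bounds) Jacobian of the nonlinearity. The algebraic heart of the matter is the pointwise inequality identifying the left-hand side, up to constants depending only on $n,p,L$, with $|D[W_\ez(Du)]|^2+|D[V_\ez(Du)]|^2$; this is the nonlinear/variable-coefficient analogue of the identity used by Cianchi--Mazya and should be isolated as a pointwise linear-algebra lemma. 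The $DA$ terms are handled by Hölder and the Sobolev embedding $W^{1,n}\hookrightarrow$ BMO-type/exponential class (for $q=n$) or $W^{1,q}\hookrightarrow L^\infty$-type control (for $q>n$): one absorbs $\int |DA|\,|Du|^{p-1}|D^2u|\lesssim \Phi_{A,\Omega}(r)\,\|D[V_\ez(Du)]\|_{L^2}^2 + (\text{lower order})$ using a covering argument at scale $r_\sharp$, which is exactly why the smallness of $\Phi_{A,\Omega}(r_\sharp)$ enters in part (ii), while in part (i) the bound $\|DA\|_{L^q}\le R_\sharp$ gives the analogous absorption with a constant depending on $R_\sharp$ via a localization/iteration on small balls.

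The genuinely hard part is the boundary term. Differentiating twice and integrating by parts on a domain with boundary produces a boundary integral that, after decomposing $D^2u$ into tangential and normal components along $\partial\Omega$ and using the boundary condition ($u=0$ or $ADu\cdot\nu=0$, which forces certain tangential derivatives of $u$ or of the flux to vanish), reduces — modulo lower-order curvature terms — to an integral over $\partial\Omega$ of the second fundamental form contracted with (nonnegative powers of) tangential gradient data, of the form $\int_{\partial\Omega}\mathcal B(\nabla_T u,\nabla_T u)\,(\ldots)\,d\mathcal H^{n-1}$. Here is where convexity is used decisively: for a convex domain $\mathcal B\ge 0$, so this boundary term has the favorable sign and can simply be dropped (for the Dirichlet problem; the Neumann case requires the parallel computation with the flux $V_\ez(Du)$ in place of $u$, and again convexity gives the right sign). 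Making this rigorous on a merely convex (hence only Lipschitz, a.e.-twice-differentiable) domain requires first establishing the identity on a smooth approximating convex domain and passing to the limit, or equivalently invoking the by-now-standard machinery (Grisvard-type / Cianchi--Mazya) that the relevant boundary integral is well-defined and retains its sign under the convexity hypothesis. I would carry out the computation first assuming $\partial\Omega\in C^2$ and convex so all integrations by parts are literal, record the sign of the boundary term, and only at the end note the approximation.

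With the boundary term discarded, one is left with
\begin{equation*}
\min\{1,p-1\}\,\big(\|D[W_\ez(Du)]\|_{L^2(\Omega)}^2+\|D[V_\ez(Du)]\|_{L^2(\Omega)}^2\big)
\le \int_\Omega f\,\Delta u\,dx + C\,\Phi_{A,\Omega}(r_\sharp)\,(\text{same quantity}) + (\text{l.o.t.}),
\end{equation*}
and it remains to (a) absorb the $DA$-term by choosing $\delta_\sharp$ (resp.\ using $R_\sharp$) small enough relative to the structural constant, (b) control $\int_\Omega f\,\Delta u$ by $\|f\|_{L^2}\|D^2u\|_{L^2}$ and then bound $\|D^2u\|_{L^2}$ by the left-hand side — this last point is itself a nonlinear interpolation: on the degenerate set $Du\approx 0$ one needs the $\ez$-regularization to relate $D^2u$ to $D[W_\ez(Du)]$, and one uses $|D^2u|\lesssim (|Du|^2+\ez)^{(2-p)/2}|D[W_\ez(Du)]|$ when $p\le 2$ together with a Young's inequality splitting, while for $p\ge 2$ the reverse role is played, yielding in all cases $\|D^2u\|_{L^2}\lesssim \|f\|_{L^2}+$(controlled), with constants independent of $\ez$. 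Finally a Cauchy--Schwarz/Young step absorbs everything and produces \eqref{2nd-reg-es0} with the claimed dependence of the constant on $n,p,q,L,R_\sharp$ in case (i) and on $n,p,L,r_\sharp$ in case (ii). I expect the boundary-term sign analysis and the $\ez$-uniform passage between $D^2u$ and $D[W_\ez(Du)]$ to be the two steps demanding the most care.
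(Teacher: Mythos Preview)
Your overall strategy has the right shape, but there is a genuine gap in the step where you bound $\int_\Omega f\,\Delta u$ by $\|f\|_{L^2}\|D^2u\|_{L^2}$ and then try to control $\|D^2u\|_{L^2}$ by $\|D[W_\ez(Du)]\|_{L^2}$ \emph{uniformly in $\ez$}. The pointwise relation $|D^2u|\approx (|Du|^2+\ez)^{(2-p)/2}|D[W_\ez(Du)]|$ carries a weight that is not uniformly bounded: for $p>2$ the factor $(|Du|^2+\ez)^{(2-p)/2}$ is as large as $\ez^{(2-p)/2}$ on the set where $Du$ vanishes, and for $p<2$ it requires an a priori $L^\infty$ bound on $Du$ that is unavailable at this stage. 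No Young-inequality splitting rescues this, since after absorbing the good term you are left with $\int_\Omega f^2(|Du|^2+\ez)^{2-p}$, which has the same defect. The paper avoids this entirely by \emph{squaring the equation pointwise}: one expands $f^2=(\mathrm{div}\,V_{A,\ez}(Du))^2$ algebraically as a coercive term $\ge \tfrac12\min\{1,(p-1)^2\}(|\sqrt A Du|^2+\ez)^{p-2}|\sqrt A D^2u\sqrt A|^2$, minus $C|DA|^2|V_{A,\ez}(Du)|^2$, plus an exact divergence. After multiplying by $\phi^2$ and integrating, the right side is simply $\|\phi f\|_{L^2}^2$, the divergence becomes the boundary term, and no bare $D^2u$ ever appears. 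This is the key structural choice, and it is not equivalent to differentiate-and-test.

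Two smaller points. First, for Dirichlet data one has $\nabla_T u=0$ on $\partial\Omega$, so the boundary integrand cannot literally be ``$\mathcal B(\nabla_T u,\nabla_T u)$ times something''; the actual expression (with variable $A$) is $\big[A_{\nu,\nu}\,\mathrm{tr}(\mathcal B A_{T,T})+\mathcal B(A_{T,\nu},A_{T,\nu})\big](\partial_\nu u)^2$, and showing this is $\le 0$ for convex $\Omega$ uses $-\mathcal B\ge 0$ together with the positivity of $A$. The paper in fact needs two different divergence forms for the Dirichlet and Neumann cases precisely so that $DA|_{\partial\Omega}$ never enters the boundary term. Second, in part~(i) no localization is needed: one takes $\phi\equiv 1$ on $\Omega$, bounds $\int_\Omega|DA|^2|V_{A,\ez}(Du)|^2$ via H\"older and Gagliardo--Nirenberg with a free small parameter (this is where $q>n$ is used), and closes with the a priori estimate $\|V_{A,\ez}(Du)\|_{L^1(\Omega)}\le C\|f\|_{L^1(\Omega)}$. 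The covering at scale $r_\sharp$ is only for part~(ii).
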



\begin{thm}\label{thm:smooth}
 Let $1<p<\fz$ and $\ez\in(0,1]$.   Suppose  that  $\Omega$, $A$ and $f$ satisfy
  (S1)-(S3).
Let $u$ be a weak solution to
\eqref{app-ellip-syseps0}.

\begin{enumerate}
\item[(i)]  There exists a constant $\dz_\ast>0$ depending only on $n,p,L,\lip_\Omega, d_\Omega$ such that
if   $\Psi_{\mathcal B}(r_\ast)\le \dz_\ast$ for some $0<r_\ast<1$, and
if
$\|D A\|_{L^q(\Omega)}\le R_\sharp$ for some $q>n$ and $0<R_\sharp<\fz$, then
 \eqref {2nd-reg-es0} holds  with the constant
  $C$  depending  only on $n,p$, $L $ and $R_\sharp $ and $r_\ast$ .

\item[(ii)]   There exist  a constant  $\dz_\ast>0$  depending only on $n,p,L,\lip_\Omega,d_\Omega,r_\Omega$,  and  another constant $\delta_\sharp>0$  depending only on $n,p,L,\lip_\Omega,d_\Omega $,
 such that  if
$\Psi_{\mathcal B}(r_\ast)\le \dz_\ast$ for some $0<r_\ast<1$  and if $\Phi_{A,\Omega}(r_\sharp)\le \delta_\sharp$ for some $r_\sharp>0$,  then  \eqref {2nd-reg-es0} holds with
  the constant
  $C$  depending only on $n,p$, $L$, $r_\ast$ and $r_\sharp$.
\end{enumerate}
\end{thm}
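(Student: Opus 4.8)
\emph{Sketch of the argument for Theorem~\ref{thm:smooth}.}
The plan is to run the same integral scheme that underlies the convex case (Theorem~\ref{thm:convex-smooth}), the only new difficulty being the boundary term, which is no longer of favourable sign but can be absorbed once $\Psi_{\mathcal B}(r_\ast)$ is small. Since under (S1)--(S3) any weak solution $u$ of \eqref{app-ellip-syseps0} is smooth up to $\overline\Omega$, I would first differentiate the equation in each coordinate direction, test the differentiated equation against $\partial_k u$ times a cutoff, sum over $k$ and integrate by parts. This produces a differential identity whose leading term controls $\int_\Omega (|\sqrt A Du|^2+\ez)^{p-2}|D^2u|^2$ with a constant depending only on $p,L$; by elementary pointwise algebra together with \eqref{ell} and the integrability of $DA$ from \eqref{sobA}, this quantity in turn dominates the left-hand side of \eqref{2nd-reg-es0}. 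On the right-hand side of the identity one finds an $f$-term handled by Cauchy--Schwarz, interior terms carrying $DA$ (already treated in the convex case), and a boundary integral over $\partial\Omega$ against the weak second fundamental form $\mathcal B$.

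Next I would localize. Cover $\overline\Omega$ by finitely many balls of radius comparable to $r_\ast$; on balls lying well inside $\Omega$ the required bound is the interior estimate used in the convex case, and no boundary geometry enters. Near a boundary point $x_0$, in the chart $B(x_0,r_\ast)\cap\Omega=B(x_0,r_\ast)\cap\Gamma(\phi)$ I would perform the bi-Lipschitz change of variables flattening $\partial\Omega$ to $\{y_n=0\}$; the transformed function solves an equation of the same structural type on a half-ball, with a new elliptic matrix $\widetilde A$ (ellipticity constant close to $L$ at small scale) whose distributional derivative is controlled by $DA$ pulled back plus the second derivatives $D^2\phi$ of the chart --- the latter being exactly where $\mathcal B$ appears. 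For the flat half-space the half-space analogue of Theorem~\ref{thm:convex-smooth} (with Dirichlet, respectively conormal-Neumann, data) yields the local estimate, while the non-flatness contributes a boundary integral essentially of the form $\int_{\partial\Omega\cap B(x_0,r_\ast)}|\mathcal B|\,(|\sqrt A Du|^2+\ez)^{p-2}|Du|^2$. By the sharp trace/interpolation inequalities of \cite{CMaz} available under \eqref{exx1} --- in the weak Lebesgue space $L^{n-1,\infty}$ when $n\ge3$ and the weak Zygmund space $L^{1,\infty}\log L$ when $n=2$ --- this boundary term is bounded by $c\,\Psi_{\mathcal B}(r_\ast)$ times the full second-order energy plus controllable lower-order terms. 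Choosing $\dz_\ast$ small enough (depending on $n,p,L,\lip_\Omega,d_\Omega$, and in part (ii) also using $r_\Omega$ to fix the localization scale) lets me absorb it into the left-hand side.

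It then remains to dispose of the $DA$-terms. Summing the interior and boundary local estimates over the finite cover, I would be left with a bound in terms of $\|f\|_{L^2(\Omega)}$ plus a term controlled by $\sup_{x}\|DA\|_{L^n(B(x,r)\cap\Omega)}$ times the second-order energy. In part (i), where $q>n$, the hypothesis $\|DA\|_{L^q(\Omega)}\le R_\sharp$ forces this supremum to decay as $r\to0$ (Morrey-type self-improvement), so shrinking the localization scale makes the $DA$-contribution a small multiple of the energy and it too gets absorbed, with final constant depending only on $n,p,L,R_\sharp,r_\ast$. In part (ii), where only $A\in W^{1,n}$ is available this self-improvement fails, and I would instead invoke the explicit hypothesis $\Phi_{A,\Omega}(r_\sharp)\le\dz_\sharp$ with $\dz_\sharp$ chosen small depending on $n,p,L,\lip_\Omega,d_\Omega$, running the same absorption and obtaining a constant depending on $n,p,L,r_\ast,r_\sharp$. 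Finally, deducing both quantities in \eqref{2nd-reg-es0} from the controlled weighted Hessian is a pointwise algebraic matter (using, for the $ADu$ field, that the extra contribution $\sim(|\sqrt A Du|^2+\ez)^{(p-2)/2}|DA|\,|Du|$ lies in $L^2$ by Sobolev embedding and the equation).

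The main obstacle is the boundary estimate of the previous paragraph: one must produce the trace inequality in precisely the Lorentz/Zygmund scale that matches the optimal threshold of Cianchi--Mazya, and, more delicately, verify that the variable matrix $A$ --- in particular the fact that the natural conormal boundary operator is $ADu\cdot\nu$ rather than $Du\cdot\nu$ --- does not destroy the structure of this boundary term after flattening. Tracking how the ellipticity constant and the $W^{1,n}$ (resp. $W^{1,q}$) norm of $A$ transform under the chart, uniformly in $\ez\in(0,1]$, is the technical heart of the argument.
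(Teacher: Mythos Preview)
Your high-level strategy---localize at scale $r_\ast$, control the boundary contribution by $\Psi_{\mathcal B}(r_\ast)$ via a trace inequality, absorb it when $\Psi_{\mathcal B}$ is small, then handle the $DA$-terms as in the convex case---matches the paper's. But the implementation diverges at two points, and one of them is a genuine gap.

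First, the paper does \emph{not} flatten the boundary. It works directly on $\Omega$: the starting point is not ``differentiate and test'' but the pointwise inequality of Lemma~\ref{L21}, obtained by squaring $f=\mathcal L_{\ez,A,p}u$ and extracting a divergence ${\bf I}$. Integrating against $\phi^2$ gives Lemma~\ref{L2} with a boundary term ${\bf K}(\phi)$; explicit intrinsic calculations on $\partial\Omega$ (Lemmas~\ref{lem:3.3.1a} and~\ref{l3.yy1}) bound the integrand of ${\bf K}(\phi)$ by $C|\mathcal B|\,|V_{A,\ez}(Du)|^2$, after which the Cianchi--Maz'ya trace inequality (Lemma~\ref{CMineq}) yields ${\bf K}(\phi)\le C_\ast\Psi_{\mathcal B}(r)\bigl[\|\phi DV_{A,\ez}(Du)\|_{L^2}^2+\||D\phi|V_{A,\ez}(Du)\|_{L^2}^2\bigr]$. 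A partition of unity at scale $r_\ast$ plus absorption gives Lemma~\ref{lem2.8}, and the $|DA|$-term is then dispatched exactly as you describe. A subtle point you miss: two different divergence forms \eqref{I1} and \eqref{I2} of ${\bf I}$ are used, one for the Dirichlet and one for the Neumann problem (Remark~\ref{rem-K}); the ``wrong'' choice would leave $DA|_{\partial\Omega}$ in the boundary integrand, which is uncontrolled under \eqref{sobA}.

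Second, your flattening step has a real problem. After the change of variables the transformed matrix $\widetilde A$ satisfies $|D\widetilde A|\lesssim |DA|+|D^2\phi|$, and under \eqref{exx1} one only has $D^2\phi\in L^{n-1,\infty}$ (respectively $L^{1,\infty}\log L$ when $n=2$) as a function of $(n-1)$ variables. In particular $D\widetilde A\notin L^n$ of the half-ball in general, so neither branch of Theorem~\ref{thm:convex-smooth} applies to the flattened problem, and the sentence ``the half-space analogue of Theorem~\ref{thm:convex-smooth} yields the local estimate'' does not go through. You then write that ``the non-flatness contributes a boundary integral'', but once you have flattened, the boundary is $\{y_n=0\}$ with $\mathcal B\equiv0$; the curvature has been moved into the \emph{interior} coefficient $\widetilde A$, not into a surface term. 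Extracting a trace-type control on the $D^2\phi$-part of $\int |D\widetilde A|^2|V_{\widetilde A,\ez}(D\widetilde u)|^2$ from inside the half-ball is not the estimate of Lemma~\ref{CMineq} and would need a separate argument. The paper avoids all of this by never changing variables: the second fundamental form enters only through the intrinsic boundary identities of Section~5.
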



 We prove  Theorem \ref{thm:convex-smooth} and  \ref{thm:smooth}  in Section 2  with the aid of key Lemma \ref{L21}, Lemma \ref{L2}, Lemma \ref{boundaryterm}, Lemmas \ref{lem:Veps-es1} and \ref{cmineq2},
whose proofs are postponed to Section 3,  Section 4, Section 5 and  Section 6 correspondingly.

The main  novelty is that, instead of Euclidean gradient $D$, we consider the intrinsic (Riemannian) gradient $\sqrt A D$, which allows
 us to combine the approach of Cianchi-Mazya for the equation $ \Delta_pu=f$ and also the classical approach to Calderon-Zygmund estimates for the equation $\mathcal L_Au=f$.
Moreover, we use different analytic properties of the boundary  of domains when dealing with Dirichlet $0$-boundary and Neumann $0$-boundary; see key Lemma \ref{L21}.



\section{Proofs of  Theorems \ref{thm:convex-smooth}\&\ref{thm:smooth}}

Let $1<p<\fz$ and $\ez\in(0,1]$.   Suppose  that  $\Omega$, $A$ and $f$ satisfy
  (S1)-(S3). Let $u$ be a weak solution to
\eqref{app-ellip-syseps0}.  We are going to  prove   \eqref{2nd-reg-es0}  and then Theorems \ref{thm:convex-smooth}\&\ref{thm:smooth}.
For simplicity of notation, we always write
 $$V_{ A,\ez }(    Du) = (|\sqrt{A}Du|^2+\epsilon)^{\frac{p-2}{2}}ADu\quad\mbox{and}\quad U_{A,\ez}(Du)=(|\sqrt A Du|^2+\ez)^{\frac{p-2}2}|\sqrt AD^2u\sqrt A|.$$
and also $$V_{ \ez }(    Du)= (| Du|^2+\epsilon)^{\frac{p-2}{2}} Du  \quad\mbox{and} \quad U_{ \ez}(Du)=(|Du|^2+\ez)^{\frac{p-2}2}|D^2u|.$$
 Then \eqref{2nd-reg-es0} reads  as
 \begin{equation}\label{end-reg-es0-1}\|DV_{A,\ez}(Du)\|_{L^2(\Omega)}+\|DV _{\ez}(Du)
\|_{L^2(\Omega)}\le C\|f\|_{L^2(\Omega)}.
 \end{equation}
Note that   $$|DV _{\ez}(Du)|\le C(n,p)  U _{\ez}(Du) \le C(n,p,L) U_{A,\ez}(Du),$$
we only need to show
 \begin{equation}\label{end-reg-es0-2}\|DV_{A,\ez}(Du)\|_{L^2(\Omega)}+\|U_{A,\ez}(Du)
\|_{L^2(\Omega)}\le C\|f\|_{L^2(\Omega)}.
 \end{equation}


We proceed as below to prove \eqref{end-reg-es0-2}.
Firstly, we establish  the  following fundamental inequality. 



\begin{lem}\label{L21}
 One has
\begin{equation}\label{div-inequal}
f^2=(\mathcal L_{\ez,A,p}u)^2   \ge \frac12\min\{1,(p-1)^2\} [U_{A,\ez}(Du)]^2 - C  |DA|^2|V_{A,\ez}(Du)|^2
+{\bf I}\\
\end{equation}
where
\begin{equation}\label{I1}{\bf I}=  \mathrm{div}\,\big\{(|\sqrt{A}Du|^2+\epsilon)^{p-2}[\mathrm{tr}(AD^2u)ADu- AD^2uADu ]\big\},
\end{equation}
or
\begin{equation}\label{I2}{\bf I}=  \mathrm{div}\,\big\{(|\sqrt{A}Du|^2+\epsilon)^{p-2}[\mathrm{div}\,(ADu)ADu-(ADu\cdot D )ADu]\big\}.\end{equation}
Here  $C >1$ is  a constant depending on $n,p,L$.
\end{lem}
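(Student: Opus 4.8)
The plan is to start from the equation $\mathcal L_{\ez,A,p}u=f$ and expand the divergence explicitly in terms of $D^2u$ and $DA$. Writing $g:=(|\sqrt A Du|^2+\ez)^{\frac{p-2}2}$, we have
$$f=-\mathrm{div}(g\,ADu)=-g\,\mathrm{div}(ADu)-Dg\cdot ADu,$$
and the term $Dg$ produces a factor $(p-2)(|\sqrt A Du|^2+\ez)^{\frac{p-4}2}$ times $D(|\sqrt A Du|^2)$. The derivative $D(|\sqrt A Du|^2)=D\langle ADu,Du\rangle$ splits into a ``good'' piece $2\langle AD^2u,Du\rangle$ (bilinear in $D^2u$ and $Du$) plus a ``bad'' piece involving $\langle (DA)Du,Du\rangle$. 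The first step is to isolate these pieces and regroup $f$ as the sum of (a) a purely second-order quasilinear expression $\mathcal Q$ in $u$ evaluated against $A$, (b) error terms proportional to $|DA||V_{A,\ez}(Du)|$, and (c) a remainder that we will recognise as a divergence $\bf I$.

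The second step, which is the heart of the matter, is to show that $\mathcal Q^2$ controls $\frac12\min\{1,(p-1)^2\}[U_{A,\ez}(Du)]^2$ modulo a divergence. This is the point where we pass to the \emph{intrinsic} (Riemannian) gradient $\sqrt A Du$: introducing $w:=\sqrt A D^2 u\sqrt A$ (a symmetric matrix) and $v:=\sqrt A Du$, the quasilinear operator, up to lower-order $DA$-terms, becomes $\mathrm{tr}(w)+(p-2)\frac{\langle w v,v\rangle}{|v|^2+\ez}$ times $g$, and we must compare its square with $g^2|w|^2$. The algebraic inequality needed is
$$\Big(\mathrm{tr}(w)+(p-2)\tfrac{\langle wv,v\rangle}{|v|^2+\ez}\Big)^2\ge \tfrac12\min\{1,(p-1)^2\}|w|^2 + \mathrm{tr}\big(\text{(something)}\big)\text{-type divergence},$$
i.e.\ the classical Cianchi--Mazya pointwise identity for $\Delta_p$, transplanted to the Euclidean Hessian of $u$ via the matrix $A$. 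I would reproduce their computation: $(\mathrm{tr}\,w)^2 = |w|^2 + \mathrm{div}$-type terms (this is exactly the pointwise identity behind $\|D^2u\|_2^2=\|\Delta u\|_2^2$ up to boundary), and the cross term $2(p-2)\mathrm{tr}(w)\frac{\langle wv,v\rangle}{|v|^2+\ez}$ together with the square term is handled by completing the square / Cauchy--Schwarz, costing the factor $\min\{1,(p-1)^2\}$ and the constant $\tfrac12$. The freedom in choosing $\bf I$ as either \eqref{I1} or \eqref{I2} corresponds to whether one writes the second-order identity using $\mathrm{tr}(AD^2u)=\mathrm{div}(ADu)-DA:Du$ before or after extracting the divergence; both are obtained by the same manipulation with the Schwarz symmetry $\partial_i\partial_j u=\partial_j\partial_i u$, just organised differently.

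The third step is bookkeeping: collect every term that carries at least one factor of $DA$ and is at most quadratic in $\sqrt A D^2u$ (hence at most linear in $U_{A,\ez}(Du)$) or zeroth order in $D^2u$, and bound it by $\ez_0[U_{A,\ez}(Du)]^2 + C(\ez_0)|DA|^2|V_{A,\ez}(Du)|^2$ via Young's inequality, absorbing the $[U_{A,\ez}(Du)]^2$-part into the main term on the right (this is why the main coefficient is $\tfrac12\min\{1,(p-1)^2\}$ rather than $\min\{1,(p-1)^2\}$). Here one uses the ellipticity \eqref{ell} to pass freely between $|\sqrt A D^2u\sqrt A|$, $|AD^2u|$, $|D^2u|$ and between $|\sqrt A Du|$, $|ADu|$, $|Du|$ at the cost of powers of $L$, and one uses that $(|\sqrt A Du|^2+\ez)^{p-2}$ and $(|\sqrt A Du|^2+\ez)^{\frac{p-2}2}$ differ by a factor $(|\sqrt A Du|^2+\ez)^{\frac{p-2}2}$ which is already packaged inside $U_{A,\ez}$ and $V_{A,\ez}$. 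All terms involving derivatives of $A$ of second order do not appear because $\mathcal L_{\ez,A,p}u=f$ only differentiates $A$ once.

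The main obstacle I anticipate is the algebraic inequality in the second step in the degenerate/singular regime $p\ne2$: controlling the indefinite cross term $(p-2)\mathrm{tr}(w)\frac{\langle wv,v\rangle}{|v|^2+\ez}$ uniformly in $\ez$ and in the direction of $v$, while still producing a genuine divergence remainder (not merely a nonnegative leftover) so that \eqref{div-inequal} holds as a pointwise identity-with-inequality rather than only after integration. The trick, following Cianchi--Mazya, is that $(\mathrm{tr}\,w)^2-|w|^2$ is \emph{exactly} a divergence of a vector field built from first derivatives of $u$ (no sign needed), so the whole negative part can be shunted into $\bf I$; the remaining genuinely quadratic form in $w$ after completing the square is then a nonnegative multiple of $|w|^2$ with the stated constant. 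A secondary technical point is making sure the divergence structure survives the presence of the variable weight $(|\sqrt A Du|^2+\ez)^{p-2}$: one checks that the $\ez$-independent combination $\mathrm{tr}(AD^2u)ADu-AD^2uADu$ (resp.\ its $\mathrm{div}$-form) is precisely what makes $\mathrm{div}\{(|\sqrt A Du|^2+\ez)^{p-2}[\cdots]\}$ reproduce the needed cross terms after the product rule, with the leftover again absorbable into the $|DA|^2|V_{A,\ez}(Du)|^2$ error.
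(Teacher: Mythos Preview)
Your plan is essentially the paper's: pass to the intrinsic variables $v=\sqrt A\,Du$, $w=\sqrt A\,D^2u\,\sqrt A$, subtract the divergence ${\bf I}$ from $f^2$, and show the remainder equals (up to $|DA|^2|V_{A,\ez}(Du)|^2$ errors handled by Young) the quadratic form
\[
(|v|^2+\ez)^{p-2}\Big[|w|^2+2(p-2)\tfrac{|wv|^2}{|v|^2+\ez}+(p-2)^2\tfrac{(v^Twv)^2}{(|v|^2+\ez)^2}\Big].
\]
Two points need sharpening. First, for $1<p<2$ your ``completing the square / Cauchy--Schwarz'' is not enough: the naive bound $|wv|^2\le|w|^2|v|^2$ yields only $(2p-3)|w|^2$, which is negative for $p<3/2$. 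What is actually needed is the linear-algebra inequality (the paper's Lemma~3.3)
\[
|M|^2\ge 2|M\xi|^2-(M\xi\cdot\xi)^2\qquad\text{for symmetric }M,\ |\xi|\le1,
\]
applied with $M=w$, $\xi=v/\sqrt{|v|^2+\ez}$; together with the splitting $1=(p-1)^2+p(2-p)$ and one Cauchy--Schwarz step this gives the form $\ge(p-1)^2|w|^2$. Second, the paper does not derive the two choices of ${\bf I}$ by a single computation ``organised differently'': it first proves the inequality with ${\bf I}$ as in \eqref{I2} and coefficient $\tfrac34$, then bounds the pointwise difference between \eqref{I1} and \eqref{I2} by $\eta[U_{A,\ez}(Du)]^2+C\eta^{-1}|DA|^2|V_{A,\ez}(Du)|^2$; choosing $\eta=\tfrac14\min\{1,(p-1)^2\}$ yields \eqref{I1} with the final coefficient $\tfrac12$. (Also, your item (c) in the first paragraph is misplaced: the divergence ${\bf I}$ is not a summand of $f$; it arises only after squaring.)
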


We introduce different $ {\bf I}$ as in \eqref{I1} and \eqref{I2} and, later in Lemma \ref{boundaryterm}, will use them for Dirichlet and Neumann $0$-boundary respectively.
This is  crucial and also necessary for us to
get Theorem \ref{thm:convex-smooth}\& \ref{thm:smooth} (and hence Theorem \ref{thm:convex}\& \ref{thm}) under the merely regularity assumption   $DA\in L^q(\Omega) $ 
as in (S2).
See Remark \ref{rem-K}(i)  for detailed reasons.

We prove \eqref{div-inequal} with {\bf I} given by \eqref{I2} and $\frac12 $ replaced by $\frac34$ via  considering the intrinsic (Riemannian) gradient $D_Au=\sqrt{A}Du$ and  borrowing some ideas from Mazya-Cianchi \cite[Lemma ]{CMaz}, see Lemma 3.1  for the details.
The additional term  $C|DA|^2V_{A,\ez}(Du)^2$ in \eqref{div-inequal} appears in a natural way.
One may use
  a similar argument to prove  \eqref{div-inequal} with {\bf I} given by \eqref{I1}. But
 instead, we bound  the difference between \eqref{I1} and \eqref{I2}    by
 $$\dz U_{A,\ez}(Du)^2  + \frac1\dz C|DA|^2V_{A,\ez}(Du)^2\quad\mbox{
for any $\dz>0$,  }$$
 see Lemma 3.2 for details.
This allows us to get \eqref{div-inequal}  with ${\bf I}$ given by \eqref{I1}.

In the special case $ A=I_n$,  our proof does give
 $$ [{\rm div}((|Du|^2+\ez)^{\frac{p-2}2}Du)]^2\ge  \min\{1,(p-1)^2\} (| Du|^2+\epsilon)^{p-2}| D^2u|^2-{\bf I},
$$
where     $$
 {\bf I}=\mathrm{div}\,\big\{(|Du|^2+\epsilon)^{p-2}[\Delta u Du-  D^2uDu ]\big\},   $$
no  matter which is given by \eqref{I1} or \eqref{I2}; see Remark 3.6.
This inequality  with the coefficient $\min\{1,(p-1)^2\}$ replaced by some    $\kz >0$ was first proved by Mazya-Cianchi \cite{mc}.
When $1<p<2$, our argument gets a clear coefficient $(p-1)^2$ and also  simplifies their original argument technically; see Remark 3.6 for more details.
Moreover, the key  inequality \eqref{I2-eq12}
used by \cite{CMaz} and also here can be proved in a simple way; see Remark 3.5 and   Lemma 3.3.

 Multiplying both sides of  \eqref{div-inequal} by some test functions and integrating,
 we could conclude the following Lemma. Note that the boundary term    ${\bf K}(\phi)$  as in \eqref{K1}  below
and  the  boundary term  ${\bf K}(\phi)$ as in \eqref{K2}  below
come,  respectively, from  \eqref{I1}   and  from \eqref{I2}.

\begin{lem}\label{L2}  
For any $\phi\in C_c^\fz(\rn)$, one has
\begin{align}\label{e3.x3}
& \|\phi  DV_{A,\ez}(Du) \|^2_{L^2(\Omega)} + \|U_{A,\ez}(Du) \|^2_{L^2(\Omega)}\nonumber\\
&\le C\|\phi f\|^2_{L^2(\Omega)} +C _1\||DA|\phi V_{A,\ez}(Du)\|^2_{L^2(\Omega)}  +C_2 \||D\phi| V_{A,\ez}(Du)\|^2_{L^2(\Omega)}  +{\bf K}(\phi)
\end{align}
%
where
\begin{equation}\label{K1}
  {\bf K}(\phi) = 
  - C
\int_{\partial\Omega}\phi^2(|\sqrt{A}Du|^2+\epsilon)^{p-2} [ \mathrm{tr}(AD^2u)ADu - AD^2uADu ]\cdot\nu\dd\mathcal{H}^{n-1}(x),\\
\end{equation}
or
\begin{equation}\label{K2}
{\bf K}(\phi)  =-C\int_{\partial\Omega}\phi^2(|\sqrt{A}Du|^2+\epsilon)^{p-2} [\mathrm{div}(ADu)ADu -  (ADu\cdot D )ADu ]\cdot\nu\dd\mathcal{H}^{n-1}(x).
\end{equation}
Here $C$, $C_1$ and $C_2$  are positive constants depending  only on  $n, p, L$.
 \end{lem}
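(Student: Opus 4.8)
The plan is to derive \eqref{e3.x3} by integrating the pointwise inequality \eqref{div-inequal} of Lemma \ref{L21} against the test weight $\phi^2$ over $\Omega$. Concretely, multiply both sides of \eqref{div-inequal} by $\phi^2\in C_c^\infty(\rn)$ (nonnegative), integrate over $\Omega$, and rearrange to get
\begin{align*}
\tfrac12\min\{1,(p-1)^2\}\int_\Omega \phi^2 [U_{A,\ez}(Du)]^2\,\dd x
&\le \int_\Omega \phi^2 f^2\,\dd x + C\int_\Omega \phi^2 |DA|^2 |V_{A,\ez}(Du)|^2\,\dd x\\
&\quad - \int_\Omega \phi^2\,{\bf I}\,\dd x.
\end{align*}
The first two terms on the right are already of the form claimed (the constant $\tfrac12\min\{1,(p-1)^2\}$ can be absorbed into the $C$'s on the left/right since it depends only on $p$). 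So the crux is the divergence term $-\int_\Omega \phi^2\,{\bf I}\,\dd x$, which must be split, via the divergence theorem on the smooth bounded domain $\Omega$, into an interior piece (producing $D(\phi^2)$ hitting the vector field, hence terms bounded by $|D\phi|\,\phi\,|V_{A,\ez}(Du)|\,U_{A,\ez}(Du)$) and a boundary piece (producing exactly ${\bf K}(\phi)$ as in \eqref{K1} or \eqref{K2}, according to whether ${\bf I}$ is taken in the form \eqref{I1} or \eqref{I2}).

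Next I would estimate the interior piece. Writing ${\bf I}=\mathrm{div}\,\mathbf{W}$ with $\mathbf{W}=(|\sqrt AДu|^2+\ez)^{p-2}[\cdots]$, integration by parts gives $-\int_\Omega\phi^2\,\mathrm{div}\,\mathbf{W}\,\dd x = \int_\Omega \nabla(\phi^2)\cdot\mathbf{W}\,\dd x - \int_{\partial\Omega}\phi^2\,\mathbf{W}\cdot\nu\,\dd\mathcal H^{n-1}$. The boundary integral is precisely $-{\bf K}(\phi)$ up to the constant $C$, matching \eqref{K1}/\eqref{K2}. For the bulk term, one checks the pointwise bound $|\mathbf{W}|\le C(n,p,L)\,(|\sqrt AДu|^2+\ez)^{(p-2)/2}|V_{A,\ez}(Du)|\,|\sqrt A D^2u\sqrt A|$ — more simply, $|\mathbf{W}|\lesssim |V_{A,\ez}(Du)|\,U_{A,\ez}(Du)$ after using ellipticity \eqref{ell} to compare $|ADu|$ with $(|\sqrt AДu|^2+\ez)^{1/2}$ times $(|\sqrt AДu|^2+\ez)^{(p-2)/2}$ — so that
$$\Big|\int_\Omega \nabla(\phi^2)\cdot\mathbf{W}\,\dd x\Big|\le C\int_\Omega \phi\,|D\phi|\,|V_{A,\ez}(Du)|\,U_{A,\ez}(Du)\,\dd x.$$
Then Young's inequality $ab\le \eta a^2+\tfrac1{4\eta}b^2$ with a small $\eta$ splits this into $\eta\int_\Omega\phi^2[U_{A,\ez}(Du)]^2$, which is absorbed into the left-hand side using the coercive coefficient $\tfrac12\min\{1,(p-1)^2\}$, plus $C_\eta\int_\Omega|D\phi|^2|V_{A,\ez}(Du)|^2$, which is exactly the $C_2\||D\phi|V_{A,\ez}(Du)\|_{L^2}^2$ term. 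This produces the claimed bound for $\|U_{A,\ez}(Du)\|_{L^2(\Omega)}^2$ with an extra ${\bf K}(\phi)$ on the right.

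Finally, to get the $\|\phi\,DV_{A,\ez}(Du)\|_{L^2(\Omega)}^2$ term on the left, I would use the pointwise comparison $|DV_{A,\ez}(Du)|\le C(n,p,L)\,U_{A,\ez}(Du)+C(n,p,L)|DA||V_{A,\ez}(Du)|$: differentiating $V_{A,\ez}(Du)=(|\sqrt AДu|^2+\ez)^{(p-2)/2}ADu$ by the product and chain rules produces a term with $D^2u$ (controlled by $U_{A,\ez}(Du)$, with the scalar factors $(|\sqrt AДu|^2+\ez)^{(p-2)/2}$ handled using $|p-2|$ and the elementary inequality $|t^2+\ez|^{(p-4)/2}t^2\le |t^2+\ez|^{(p-2)/2}$ and the bound $|t|\le (t^2+\ez)^{1/2}$) and a term with $DA$ (giving $|DA||V_{A,\ez}(Du)|$ after using \eqref{ell}). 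Multiplying by $\phi$, squaring, integrating, and adding $\int_\Omega\phi^2[U_{A,\ez}(Du)]^2$ to the already-proved estimate for $\|U_{A,\ez}(Du)\|_{L^2(\Omega)}^2$, the $\phi^2|DA|^2|V_{A,\ez}(Du)|^2$ contribution merges into the $C_1$ term and we arrive at \eqref{e3.x3}. The main obstacle is bookkeeping: carefully tracking which constants depend only on $n,p,L$, making sure the small parameter $\eta$ from Young's inequality is chosen depending only on $p$ so that the absorption is legitimate, and being careful that all the $\ez$-uniform pointwise inequalities (the $|DV_{A,\ez}|$ and $|\mathbf{W}|$ bounds) hold for all $1<p<\infty$ including the ranges $p<2$ and $p>2$ where the scalar exponents behave differently; none of this is deep, but it must be done uniformly in $\ez\in(0,1]$.
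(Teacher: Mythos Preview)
Your proposal is correct and follows essentially the same route as the paper: multiply \eqref{div-inequal} by $\phi^2$, integrate, apply the divergence theorem to split $-\int_\Omega\phi^2\mathbf{I}$ into the boundary term ${\bf K}(\phi)$ and an interior term, absorb the latter by Young's inequality, and then use the pointwise bound $|DV_{A,\ez}(Du)|\le C\,U_{A,\ez}(Du)+C\,|DA|\,|V_{A,\ez}(Du)|$ to put $\|\phi\,DV_{A,\ez}(Du)\|_{L^2}^2$ on the left. One small correction: your pointwise bound $|\mathbf{W}|\lesssim |V_{A,\ez}(Du)|\,U_{A,\ez}(Du)$ is only valid for the form \eqref{I1}; for \eqref{I2} the vector field involves $\nabla(ADu)$ and hence picks up an extra $|DA|\,|V_{A,\ez}(Du)|^2$ contribution, which, after Young's inequality, simply merges into the $C_1\||DA|\phi V_{A,\ez}(Du)\|_{L^2}^2$ term (exactly as the paper does in \eqref{J-es}).
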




Thanks to
 Lemma \ref{L2}, to get \eqref{end-reg-es0-1}  we only  need to bound the last three terms  in the right hand side of \eqref{e3.x3}, that is,
$$  C_1\||DA|\phi V_{A,\ez}(Du)\|^2_{L^2(\Omega)} ,\quad C_2 \||D\phi| V_{A,\ez}(Du)\|^2_{L^2(\Omega)}, \quad\mbox{and}\quad {\bf K}(\phi).$$

The boundary term ${\bf K}(\phi)$ is bounded as in Lemma \ref{boundaryterm}.
\begin{lem}\label{boundaryterm}

(i) If $\Omega$ is bounded smooth convex domain, then    ${\bf K}(\phi)\le 0$ whenever $\phi\in C_c^\fz(\rn)$.

(ii) If $\Omega$ is bounded smooth   domain,   then   \begin{align*}
   {\bf K}(\phi)
 \le C_\ast \Psi_{\mathcal B}(r)[\|\phi   D V_{A,\ez}(Du) \|_{L^2(\Omega)}^2+ \||D \phi   | V_{A,\ez}(Du)\|_{L^2(\Omega)}^2]
\end{align*}
 whenever $\phi\in C_c^\fz(B(z,r))$ with  $z\in\overline \Omega$ and $0<r<1$. Here $C_\ast>0$ is a constant depending only on $ n,L$.
\end{lem}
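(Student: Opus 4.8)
The plan is to bound the boundary integral
\[
{\bf K}(\phi)=-C\int_{\partial\Omega}\phi^2(|\sqrt{A}Du|^2+\epsilon)^{p-2}\bigl[{\bf W}\bigr]\cdot\nu\,\dd\mathcal H^{n-1}(x),
\]
where ${\bf W}$ is either $\mathrm{tr}(AD^2u)ADu-AD^2uADu$ (case \eqref{K1}) or $\mathrm{div}(ADu)ADu-(ADu\cdot D)ADu$ (case \eqref{K2}), by expressing its normal component on $\partial\Omega$ in terms of the (weak) second fundamental form $\mathcal B$. For part (i), the key point is that on a convex domain $\mathcal B$ is sign-definite, so after the tangential/normal decomposition the integrand is pointwise $\le 0$. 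For part (ii) we estimate the resulting surface integral by $\Psi_{\mathcal B}(r)$ times a trace norm of $DV_{A,\ez}(Du)$, and then use a trace/Sobolev inequality on the half-ball chart to pass from the boundary to the interior norms appearing in the statement.

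First I would reduce to the Dirichlet case \eqref{K1} (the Neumann case \eqref{K2} is handled by the analogous computation, exploiting instead that the Neumann boundary condition makes $ADu$ tangential). On $\partial\Omega$, for a solution vanishing on the boundary, $Du$ is parallel to $\nu$, so $Du=(\partial_\nu u)\nu$; differentiating the identity $u\equiv 0$ along $\partial\Omega$ twice yields the classical relation expressing the tangential second derivatives of $u$ through $\mathcal B$ and $\partial_\nu u$, namely for tangential directions $\tau_i,\tau_j$ one has $D^2u(\tau_i,\tau_j)=-\mathcal B_{ij}\,\partial_\nu u$. Plugging this into $[\mathrm{tr}(AD^2u)ADu-AD^2uADu]\cdot\nu$ and keeping track of the matrix $A$ (here the intrinsic viewpoint $D_Au=\sqrt A Du$ is what makes the algebra clean), the normal component collapses to an expression of the form $-(\text{positive factor})\cdot(|\sqrt A Du|^2+\epsilon)^{p-2}\,\langle \mathcal B\,\sqrt A Du_{\mathrm{tan}},\sqrt A Du_{\mathrm{tan}}\rangle$ up to terms involving only tangential data. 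When $\Omega$ is convex, $\mathcal B\ge 0$ (with the sign convention making the second fundamental form of a convex domain nonnegative), so the whole integrand has the sign that makes ${\bf K}(\phi)\le 0$; this gives (i).

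For (ii), the same algebraic identity gives a pointwise bound
\[
\bigl|{\bf W}\cdot\nu\bigr|\le C(n,L)\,|\mathcal B|\,|V_{A,\ez}(Du)|\,\bigl|(|\sqrt AD u|^2+\epsilon)^{\frac{p-2}{2}}\sqrt A D^2 u\,\text{(tangential)}\bigr|
\]
on $\partial\Omega$, hence
\[
{\bf K}(\phi)\le C(n,L)\int_{\partial\Omega}\phi^2\,|\mathcal B|\,|V_{A,\ez}(Du)|\,|D V_{A,\ez}(Du)|\,\dd\mathcal H^{n-1}.
\]
Covering $\mathrm{supp}\,\phi\cap\partial\Omega$ by the Lipschitz graph chart, flattening the boundary, and applying Hölder's inequality with the Lorentz-space pairing $L^{n-1,\infty}$ against $L^{n-1,1}$ (and the Zygmund refinement when $n=2$) lets us pull out $\|\mathcal B\|_{L^{n-1,\infty}(\partial\Omega\cap B_r)}\le\Psi_{\mathcal B}(r)$, leaving a trace-norm product $\|\phi\, w\|_{L^{2(n-1)/(n-2),\ldots}(\partial\Omega)}\|\phi\,DV_{A,\ez}(Du)\|_{L^2(\partial\Omega)}$-type quantity in $w:=V_{A,\ez}(Du)$. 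A boundary trace inequality on the flattened half-ball, together with the Sobolev embedding, converts these surface norms into $\|\phi\,DV_{A,\ez}(Du)\|_{L^2(\Omega)}+\||D\phi|\,V_{A,\ez}(Du)\|_{L^2(\Omega)}$, which yields the claimed estimate with $C_\ast=C_\ast(n,L)$ after absorbing the constants from the chart (whose bi-Lipschitz norms are controlled by $\lip_\Omega$, but one checks these do not enter $C_\ast$ because $\Psi_{\mathcal B}$ is already the scale-invariant quantity carrying that information).

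The main obstacle I expect is the rigorous trace/interpolation step in (ii): one must pass from the surface integral of $|\mathcal B|\,|w|\,|Dw|$ to interior norms while (a) only assuming $\mathcal B$ in the weak-Lorentz class $L^{n-1,\infty}$ (resp. the weak Zygmund class when $n=2$), which forces the use of the sharp Lorentz-space trace embedding rather than a naive Hölder bound, and (b) keeping the final constant $C_\ast$ dependent only on $n$ and $L$ — in particular uniform as $\epsilon\to 0$ and independent of $\lip_\Omega$ and $d_\Omega$, which requires isolating the scaling carefully so that the Lipschitz and diameter constants are absorbed into the already-present factor $\Psi_{\mathcal B}(r)$ and the cutoff derivative term. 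The algebraic identity for ${\bf W}\cdot\nu$ in terms of $\mathcal B$ (for both \eqref{K1} and \eqref{K2}) is a finite computation using $Du\parallel\nu$ on $\partial\Omega$ for the Dirichlet case and $ADu\perp\nu$ for the Neumann case, and I would record it as a short lemma before doing the integral estimates.
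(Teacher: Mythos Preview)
Your plan for part (i) is essentially correct and matches the paper: one computes ${\bf W}\cdot\nu$ on $\partial\Omega$ using the boundary condition and shows it is a quadratic form in $\mathcal B$ which has a sign on convex domains. The precise identity in the Dirichlet case is
\[
-[\mathrm{tr}(AD^2u)ADu-AD^2uADu]\cdot\nu=\bigl[A_{\nu,\nu}\,\mathrm{tr}(\mathcal B A_{T,T})+\mathcal B(A_{T,\nu},A_{T,\nu})\bigr]\Bigl(\frac{\partial u}{\partial\nu}\Bigr)^2,
\]
and for Neumann one gets $-{\bf W}\cdot\nu=\mathcal B((ADu)_T,(ADu)_T)$; both are $\le 0$ when $-\mathcal B\ge 0$.

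Your approach to part (ii), however, has a genuine gap. The pointwise bound you propose,
\[
|{\bf W}\cdot\nu|\le C(n,L)\,|\mathcal B|\,|V_{A,\ez}(Du)|\,\bigl|(|\sqrt A Du|^2+\epsilon)^{\frac{p-2}{2}}\sqrt A D^2u\bigr|,
\]
is not what the boundary computation actually gives, and more importantly it would not lead to the claimed estimate. The crucial fact you are missing is that \emph{all second derivatives of $u$ cancel} in ${\bf W}\cdot\nu$ once the boundary condition is used: the identity above shows the right-hand side involves only $\mathcal B$, $A$, and $(\partial_\nu u)^2$. Hence the correct pointwise bound is simply
\[
-{\bf W}\cdot\nu\le C(n,L)\,|\mathcal B|\,|ADu|^2,
\]
which gives ${\bf K}(\phi)\le C\int_{\partial\Omega}\phi^2\,|\mathcal B|\,|V_{A,\ez}(Du)|^2\,\dd\mathcal H^{n-1}$. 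If you instead carry a factor of $|DV_{A,\ez}(Du)|$ to the boundary as you propose, then any trace or H\"older argument passing back to the interior would require control of $D^2V_{A,\ez}(Du)$ in $L^2(\Omega)$, i.e.\ third derivatives of $u$, which are unavailable.

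Once the correct quadratic bound is in hand, the passage to interior norms is a single application of the Cianchi--Maz'ya trace inequality
\[
\int_{\partial\Omega\cap B_r(x)}v^2\,|\mathcal B|\,\dd\mathcal H^{n-1}\le C\,\Psi_{\mathcal B}(r)\int_{\Omega\cap B_r(x)}|Dv|^2\,\dd y
\]
with $v=\phi\,V_{A,\ez}(Du)$; expanding $|D(\phi V_{A,\ez}(Du))|^2$ gives exactly the two terms in the statement. Your proposed route through Lorentz--H\"older pairing, flattening, and a separate Sobolev trace step is unnecessary and, as written, would not close because of the spurious $D^2u$ factor.
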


See Section 5 for the proof of Lemma \ref{boundaryterm}.
Here are some necessary remarks for the proof.
\begin{rem}\label{rem-K}\rm
(i) When $u$ has Dirichlet  $0$-boundary, to get  some suitable  estimate of ${\bf K}(\phi)$ as above,
we have to use  ${\bf K}(\phi)$ as given in \eqref{K1}, which comes from \eqref{I1}.
Otherwise, if we use the  ${\bf K}(\phi)$ as given in \eqref{K2},
then $ DA |_{\partial\Omega}$ will appear in the upper bound of ${\bf K}(\phi)$. However,  no  assumption is made on $ DA |_{\partial\Omega}$ in this paper;
 $DA\in L^q(\Omega)$ as in (S2) does  not give any information of $DA|_{\partial\Omega}$.

Similarly, when $u$ has Neumann $0$-boundary, to get   some suitable  estimate of ${\bf K}(\phi)$ as above,
we also have to use  ${\bf K}(\phi)$ as given in \eqref{K2}, which comes from \eqref{I2}.

(ii)  Besides (i), the proof of Lemma \ref{boundaryterm} (i) relies on the convex geometry;
the proof of Lemma \ref{boundaryterm}  (ii) relies on trace formula by Cianchi-Mazya \cite{CMaz}.
 Some careful calculation/obervation are also necessary.

\end{rem}

Concerning the  term $C\||DA|\phi V_{A,\ez}(Du)\|^2_{L^2(\Omega)} $,
we use the Gargliardo-Nirenberg-Sobolev inequality to get the following upper bound via the $L^q$-norm of $DA$ with $q> n\ge 2$, and  also local $L^n$-norm of $DA$ with $n\ge3$; for the proof see  Section 6.

\begin{lem}\label{lem:Veps-es1}
(i) Given  $q>n$, for any $0<\eta<1$  we have
  \begin{align*}
 \|DA||V_{A,\ez}(Du)|\|_{L^2(\Omega)}^2
  &\le   \eta\|DA\|^2_{ L^{q }(\Omega  )}   \|  D  V_{A,\ez}(Du)  \|^2_{L^2(\Omega)} + \frac C\eta \|DA\|^2_{ L^{q }(\Omega  )}
   \|  V_{A,\ez}(Du) \|^2_{L^1(\Omega)}.
  \end{align*}

(ii)  Given any $\phi\in C_c^\fz(B(z,r))$ for some $z\in\overline\Omega$ and $0<r<1$, we have
  \begin{align*}
   \||DA|\phi|V_{A,\ez}(Du)|\|_{L^2(\Omega)}^2
  &\le  C_\sharp\|DA\|^2_{ L^{n }(\Omega \cap B(z,r))}   \|\phi  D  V_{A,\ez}(Du)  \|^2_{L^2(\Omega)}\\
&+ C_3\|DA\|^2_{ L^{n }(\Omega \cap B(z,r))}   \big[
  \| | D\phi    |V_{A,\ez}(Du) \|^2_{L^2(\Omega)} + C   \|\phi  V_{A,\ez}(Du) \|^2_{L^1(\Omega)}\big].
  \end{align*}
\end{lem}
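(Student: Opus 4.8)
\textbf{Proof proposal for Lemma \ref{lem:Veps-es1}.}
The plan is to estimate the weighted $L^2$-norm $\||DA|\,|V_{A,\ez}(Du)|\|_{L^2}$ by applying H\"older's inequality to split off the factor $|DA|$ in its natural Lebesgue exponent, and then to absorb the remaining factor involving $V_{A,\ez}(Du)$ by a Gagliardo--Nirenberg--Sobolev interpolation, finally using Young's inequality with parameter $\eta$ (resp.\ the smallness of the local $L^n$-norm of $DA$) to move the top-order term $\|D V_{A,\ez}(Du)\|_{L^2}$ to the good side.

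For part (i): write $\||DA|\,|V_{A,\ez}(Du)|\|_{L^2(\Omega)}^2=\int_\Omega |DA|^2 |V_{A,\ez}(Du)|^2$ and apply H\"older with exponents $q/2$ and $(q/2)'$ to obtain $\|DA\|_{L^q(\Omega)}^2\,\|V_{A,\ez}(Du)\|_{L^{2s}(\Omega)}^2$ where $s=(q/2)'=q/(q-2)$, so $2s=2q/(q-2)$. Since $q>n$, the exponent $2s$ satisfies $2<2s<2n/(n-2)=2^\ast$ (for $n\ge3$; for $n=2$ any finite exponent is admissible), hence $2s$ lies strictly between $1$ and the Sobolev conjugate of $1$, namely $n/(n-1)$ is too small --- so instead one uses the GNS inequality $\|g\|_{L^{2s}(\Omega)}\le C\|Dg\|_{L^{2}(\Omega)}^\theta\|g\|_{L^{1}(\Omega)}^{1-\theta}$ with the scaling-correct $\theta\in(0,1)$ determined by $\frac1{2s}=\theta(\frac12-\frac1n)+(1-\theta)$, applied to $g=V_{A,\ez}(Du)$ (which is smooth up to $\overline\Omega$, so the inequality on the domain $\Omega$ with its $W^{1,2}$ and $L^1$ norms is legitimate, the implied constant depending on $\Omega$). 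Squaring and then applying Young's inequality $ab\le \frac\eta{c}a^{1/\theta}+C_\eta b^{1/(1-\theta)}$ to the product $\|DV_{A,\ez}(Du)\|_{L^2}^{2\theta}\|V_{A,\ez}(Du)\|_{L^1}^{2(1-\theta)}$, one gets exactly the stated bound, with the coefficient of the top-order term being an arbitrarily small multiple of $\|DA\|_{L^q(\Omega)}^2$ after renaming $\eta$.

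For part (ii): here the multiplier $\phi$ is supported in a small ball $B(z,r)$, so one works with $h:=\phi\,V_{A,\ez}(Du)$. The point is to use the borderline exponent $q=n$, for which H\"older gives $\int_\Omega |DA|^2 h^2\le \|DA\|_{L^n(\Omega\cap B(z,r))}^2\,\|h\|_{L^{2n/(n-2)}(\Omega)}^2 = \|DA\|_{L^n(\Omega\cap B(z,r))}^2\,\|h\|_{L^{2^\ast}(\Omega)}^2$, and then the Sobolev embedding $\|h\|_{L^{2^\ast}(\Omega)}\le C_\sharp \|Dh\|_{L^2(\Omega)}$ --- no $L^1$ term is needed at this step for the homogeneous Sobolev inequality on $\rn$, but since $h$ has compact support one may as well use $W^{1,2}\hookrightarrow L^{2^\ast}$ directly. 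Expanding $Dh=\phi\,DV_{A,\ez}(Du)+(D\phi)\,V_{A,\ez}(Du)$ and using $(a+b)^2\le 2a^2+2b^2$ produces the term $C_\sharp\|DA\|_{L^n}^2\|\phi\,DV_{A,\ez}(Du)\|_{L^2}^2$ plus $C_\sharp\|DA\|_{L^n}^2\||D\phi|V_{A,\ez}(Du)\|_{L^2}^2$; the extra $\|\phi V_{A,\ez}(Du)\|_{L^1}^2$ term in the statement is absorbed by replacing the pure homogeneous Sobolev inequality with the inhomogeneous one $\|h\|_{L^{2^\ast}}\le C(\|Dh\|_{L^2}+\|h\|_{L^1})$ valid on a bounded domain, which is cleaner and harmless since it only adds a lower-order term. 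Here the constant $C_\sharp$ is the Sobolev constant (independent of $r,z$), and the fact that $\|DA\|_{L^n(\Omega\cap B(z,r))}\to 0$ as $r\to0$ when $DA\in L^n$ is what will later let us absorb this term.

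The main obstacle --- really a bookkeeping point rather than a deep one --- is to keep the coefficient of the top-order term $\|DV_{A,\ez}(Du)\|_{L^2}^2$ in the correct quantitative form: in (i) it must be an arbitrarily small factor $\eta$ times $\|DA\|_{L^q}^2$ (this requires that the GNS exponent $\theta$ satisfies $\theta<1$, which is exactly the gain from $q>n$ rather than $q=n$, and then Young's inequality can shift all the $\eta$-dependence onto the lower-order $L^1$ term with constant $C/\eta$), whereas in (ii) the coefficient is the \emph{non}-small but \emph{localizable} quantity $C_\sharp\|DA\|_{L^n(\Omega\cap B(z,r))}^2$. One must also be slightly careful that all GNS/Sobolev inequalities are applied to functions that are genuinely in the relevant spaces: since $\Omega$, $A$, $f$ satisfy (S1)--(S3) the solution $u$ is smooth up to $\overline\Omega$, so $V_{A,\ez}(Du)\in C^\infty(\overline\Omega)$ and every integral above is finite; the constants then depend only on $n$ (and on $\Omega$ through the Sobolev/GNS constants, which for part (ii) can be taken to be the dimensional constant on $\rn$ because of the compact support).
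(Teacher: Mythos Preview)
Your proposal is correct and follows essentially the same route as the paper: H\"older to split off $\|DA\|_{L^q}^2$ (resp.\ $\|DA\|_{L^n(\Omega\cap B(z,r))}^2$), then the Gagliardo--Nirenberg--Sobolev inequality (Lemma~\ref{G-N-S}) applied to $V_{A,\ez}(Du)$ (resp.\ $\phi V_{A,\ez}(Du)$), followed in part~(i) by Young's inequality with exponent $1/\theta$ to separate the interpolation product. The only caveat is that in (ii) the function $h=\phi V_{A,\ez}(Du)$ need not vanish on $\partial\Omega$ when $B(z,r)$ meets the boundary, so you must use the Sobolev inequality on $\Omega$ (with constant depending on $\lip_\Omega$) rather than on $\rn$ --- this is exactly the inhomogeneous version you end up invoking, but the constant $C_\sharp$ is then not purely dimensional.
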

Towards the  term $C\||D\phi| V_{A,\ez}(Du)\|^2_{L^2(\Omega)} $,
similarly to Lemma \ref{lem:Veps-es1}, one has 
the following.
\begin{lem}\label{cmineq2}  For any $\eta>0$    we have
  \begin{align*}
 \|V_{A,\ez}(Du)\|_{L^2(\Omega)}^2
  &\le \eta \|D V_{A,\ez}(Du)\|_{L^2(\Omega)}^2+ C  \|V_{A,\ez}(Du)\| ^2_{L^1(\Omega)}.
  \end{align*}
\end{lem}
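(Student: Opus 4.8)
The plan is to recognize that the asserted inequality is a purely functional (interpolation) inequality: writing $w:=V_{A,\ez}(Du)$, which under (S1)--(S3) is smooth on $\overline\Omega$ (weak solutions of \eqref{app-ellip-syseps0} are smooth up to the boundary, and $(|\sqrt ADu|^2+\ez)^{\frac{p-2}2}$ is smooth since $|\sqrt ADu|^2+\ez\ge\ez>0$ and $A\in C^\fz$), hence $w\in W^{1,2}(\Omega,\rn)$, no structural property of $V_{A,\ez}$ is used. Thus it suffices to prove that for every $\eta>0$ there is a constant $C$, depending only on $\eta$, $n$ and the geometry of $\Omega$, with
\[
\|w\|_{L^2(\Omega)}^2\le \eta\,\|Dw\|_{L^2(\Omega)}^2+C\,\|w\|_{L^1(\Omega)}^2\qquad\text{for every }w\in W^{1,2}(\Omega,\rn).
\]

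First I would apply the Gagliardo--Nirenberg interpolation inequality componentwise on $\Omega$. Since $\Omega$ is a bounded $W^{1,2}$-extension domain (smooth by (S2) in the regularized setting; convex in Theorem \ref{thm:convex-smooth}, resp. bounded Lipschitz satisfying \eqref{exx1} in Theorem \ref{thm:smooth}), one has
\[
\|w\|_{L^2(\Omega)}\le C\,\|w\|_{W^{1,2}(\Omega)}^{\theta}\,\|w\|_{L^1(\Omega)}^{1-\theta},\qquad \theta=\frac{n}{n+2}\in(0,1),
\]
with $C$ depending only on $n$, $d_\Omega$ and $\lip_\Omega$. (Equivalently, for $n\ge3$ combine the embedding $W^{1,2}(\Omega)\hookrightarrow L^{2n/(n-2)}(\Omega)$ with the elementary interpolation of $L^2$ between $L^1$ and $L^{2n/(n-2)}$, and for $n=2$ use $W^{1,2}(\Omega)\hookrightarrow L^4(\Omega)$ together with interpolation of $L^2$ between $L^1$ and $L^4$.) Next I would apply Young's inequality with exponents $1/\theta$ and $1/(1-\theta)$ to obtain, for any $\dz>0$,
\[
\|w\|_{L^2(\Omega)}\le \dz\,\|w\|_{W^{1,2}(\Omega)}+C_\dz\,\|w\|_{L^1(\Omega)},
\]
then square both sides, use $\|w\|_{W^{1,2}(\Omega)}^2=\|w\|_{L^2(\Omega)}^2+\|Dw\|_{L^2(\Omega)}^2$ together with $(a+b)^2\le 2a^2+2b^2$, and absorb the resulting term $2\dz^2\|w\|_{L^2(\Omega)}^2$ into the left-hand side by requiring $\dz\le\tfrac12$. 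This gives $\|w\|_{L^2(\Omega)}^2\le 4\dz^2\|Dw\|_{L^2(\Omega)}^2+4C_\dz^2\|w\|_{L^1(\Omega)}^2$; given the prescribed $\eta$ I then choose $\dz$ so that $4\dz^2=\min\{\eta,\tfrac12\}$ and relabel the constant. Taking $w=V_{A,\ez}(Du)$ completes the proof.

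The argument is routine; the only points deserving attention are that $\Omega$ be admissible for the Gagliardo--Nirenberg inequality (guaranteed by smoothness in (S2), convexity, or \eqref{exx1}, all of which make $\Omega$ an extension domain with geometric constant controlled by $d_\Omega,\lip_\Omega$), and that the interpolation exponent $\theta=n/(n+2)$ be strictly less than $1$, so that the $\|w\|_{L^2(\Omega)}^2$ term produced upon squaring can indeed be absorbed — which holds for every $n\ge2$. The resulting $C$ depends on $\eta$, $n$, $d_\Omega$ and $\lip_\Omega$ but is independent of $u$, $f$ and $\ez$, as needed for the subsequent approximation.
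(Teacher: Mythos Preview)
Your proposal is correct and follows essentially the same route as the paper: both apply Gagliardo--Nirenberg interpolation with $\theta=\tfrac{n}{n+2}$ to $w=V_{A,\ez}(Du)$ and then Young's inequality. The only cosmetic difference is that the paper invokes the mean-subtracted form $\|v-v_\Omega\|_{L^2}^2\le C\|Dv\|_{L^2}^{2\theta}\|v\|_{L^1}^{2(1-\theta)}$ (Lemma~\ref{G-N-S}), so no absorption of an $\|w\|_{L^2}^2$ term is needed, whereas you use the full $W^{1,2}$ norm and absorb; both variants are standard and equivalent.
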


Observe that   the $L^1$-norm of $V_{A,\ez}(Du)$ or $\phi V_{A,\ez}(Du)$ appears in Lemma 2.4 and Lemma 2.5.
To handle them, we need the following $L^1$-estimate by   \cite{CMaz2017}:
\begin{equation}
  \|(|Du|^2 + \epsilon)^{\frac{p-2}{2}} |Du|  \|_{L^1(\Omega)}  \leq C \|f\|_{L^1(\Omega)}.
\end{equation}
 As a consequence we have
\begin{cor}\label{L1-esti}  We have $
 \|V_{A,\ez}(Du)\|^2_{L^1(\Omega)}   \leq C \|f\|^2 _{L^2(\Omega)}.
$
\end{cor}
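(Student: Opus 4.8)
The plan is to deduce Corollary~\ref{L1-esti} from the quoted $L^1$-estimate
$$
  \|(|Du|^2+\epsilon)^{\frac{p-2}{2}}|Du|\|_{L^1(\Omega)}\le C\|f\|_{L^1(\Omega)}
$$
by two elementary reductions: first replacing $|\sqrt ADu|$ by $|Du|$ inside $V_{A,\ez}(Du)$ using ellipticity \eqref{ell}, and then passing from $\|f\|_{L^1(\Omega)}$ to $\|f\|_{L^2(\Omega)}$ via H\"older's inequality on the bounded domain $\Omega$.

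First I would estimate the pointwise size of $V_{A,\ez}(Du)=(|\sqrt ADu|^2+\epsilon)^{\frac{p-2}2}ADu$. By \eqref{ell} we have $|ADu|\le L|Du|$ and $\tfrac1L|Du|^2\le|\sqrt ADu|^2\le L|Du|^2$, hence $|\sqrt ADu|^2+\epsilon$ is comparable to $|Du|^2+\epsilon$ with constants depending only on $L$ (note $\epsilon\le1$, or simply that $\min\{1,1/L\}(|Du|^2+\epsilon)\le|\sqrt ADu|^2+\epsilon\le\max\{1,L\}(|Du|^2+\epsilon)$). Therefore, whether $p\ge2$ or $1<p<2$, one gets
$$
  |V_{A,\ez}(Du)|\le C(n,p,L)\,(|Du|^2+\epsilon)^{\frac{p-2}2}|Du|
$$
pointwise in $\Omega$. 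Integrating and applying the quoted estimate yields $\|V_{A,\ez}(Du)\|_{L^1(\Omega)}\le C(n,p,L)\|f\|_{L^1(\Omega)}$.

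Next I would upgrade the right-hand side: since $\Omega$ is bounded, H\"older's inequality gives $\|f\|_{L^1(\Omega)}\le|\Omega|^{1/2}\|f\|_{L^2(\Omega)}$, so that $\|V_{A,\ez}(Du)\|_{L^1(\Omega)}\le C\|f\|_{L^2(\Omega)}$ with $C$ additionally depending on $|\Omega|$. Squaring both sides gives exactly $\|V_{A,\ez}(Du)\|_{L^1(\Omega)}^2\le C\|f\|_{L^2(\Omega)}^2$, which is the claim. I do not anticipate a genuine obstacle here; the only point requiring a little care is checking that the comparison $(|\sqrt ADu|^2+\epsilon)^{\frac{p-2}2}\le C(|Du|^2+\epsilon)^{\frac{p-2}2}$ holds uniformly in $\epsilon\in(0,1]$ in the singular range $1<p<2$ (where $\tfrac{p-2}2<0$), but this is immediate from the two-sided comparability of the bases with $L$-dependent constants. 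One should also note that the quoted $L^1$-estimate of \cite{CMaz2017} is stated for the $p$-Laplacian $\Delta_p$, i.e. for the constant-coefficient regularized equation; to apply it to a solution of \eqref{app-ellip-syseps0} one either invokes the coefficient version of that estimate or, more directly, observes that $V_{A,\ez}(Du)$ and the corresponding quantity for the equation under consideration are controlled by $\|f\|_{L^1}$ through the same duality/truncation argument as in \cite{CMaz2017}, now carried out with the structure condition \eqref{ell}; since this is standard I would simply cite it.
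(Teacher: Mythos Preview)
Your proposal is correct and follows exactly the route the paper intends: the paper does not spell out a proof but simply records the $L^1$-estimate from \cite{CMaz2017} and states the corollary ``as a consequence,'' and your two reductions (ellipticity to compare $|V_{A,\ez}(Du)|$ with $(|Du|^2+\epsilon)^{(p-2)/2}|Du|$, then H\"older on the bounded domain to pass from $\|f\|_{L^1}$ to $\|f\|_{L^2}$, then squaring) are precisely the missing details. Your caveat about \cite{CMaz2017} being stated for $\Delta_p$ is well taken, but that reference in fact treats general quasilinear operators under standard structure conditions, so the estimate applies directly to \eqref{app-ellip-syseps0} without a separate argument.
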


 Now, with   Lemma \ref{L2}-Lemma \ref{lem:Veps-es1} and Corollary \ref{L1-esti} in hand, we conclude \eqref {end-reg-es0-1}  as below.

If $\Omega$ is convex,  Lemma \ref{boundaryterm} (i) gives ${\bf K}(\phi) \le0 $.
\begin{enumerate}
\item [(i)] Case  $DA\in L^q(\Omega)$ with $q>n$, we  choose  a  test function $\phi$ with  $\phi=1$ in $\Omega$ so that   $C\||D\phi| V_{A,\ez}(Du)\|^2_{L^2(\Omega)} =0$.
Then  \eqref {end-reg-es0-1} from Lemma \ref{L2} and Lemma \ref{lem:Veps-es1} (i) with sufficiently small $\eta>0$ so that  the coefficient $ \eta\|DA\|_{L^q(\Omega)}$    is small.
This give Theorem \ref{thm:convex-smooth}(i).
\vspace{-0.8cm}
\item [(ii)] Case $DA\in L^n(\Omega)$ with $n\ge 3$, we  find small $0\le r_\sharp\le 1$ so that the coefficient
 $C_\sharp\|DA\|^2_{ L^{n }(\Omega \cap B(z,r))}$ appeared in Lemma 2.4(ii) is small.
Then cover $\Omega$ by a family of balls $B_k$ with radius $r_\sharp/4\le r_k<r_\sharp $, and  denote by  $\{\phi_k\}$ be an associated  unity of partition.
Apply  \eqref{e3.x3} to such such $\phi_k$, and use  Lemma 2.4(ii) to bound $ \||DA|\phi_k V_{A,\ez}(Du)\|^2_{L^2(\Omega)}$.
Summation over all $\phi_k$ and using lemma \ref{lem:Veps-es1} we get the desired upper bound as in Theorem \ref{thm:convex-smooth}(ii).
See Section 7 for details.
\end{enumerate}

For general domain $\Omega$, we use Lemma \ref{boundaryterm} (ii) to bound ${\bf K}(\phi)$.
We find $0<r_\ast\le 1$ so that  the coefficient $ C_\ast \Psi_{\mathcal B}(r)$ appeared in Lemma \ref{boundaryterm} is sufficiently small.
Then cover $\Omega$ by a family of balls $B_k$ with radius $r_\ast/4\le r_k<r_\ast $, and  denote by  $\{\psi_k\}$ be an associated  unity of partition.
Apply  \eqref{e3.x3} to such such $\psi_k$, and apply Lemma \ref{lem:Veps-es1}(ii) to bound $ \||DA|\psi_k V_{A,\ez}(Du)\|^2_{L^2(\Omega)}$. Summation over all $\psi_k$ we obtain
$L^2$-norm of $DV_{A,\ez}(Du)$ is bounded by the summation of the $L^2$-norm of  $f$ ,$|DA|V_{A,\ez}(Du)$  and $V_{A,\ez}(Du)$, see Lemma \ref{lem2.8} below.
\begin{enumerate}
\item [(iii)] Case  $DA\in L^q(\Omega)$ with $q>n$,
then \eqref {end-reg-es0-1}  follows from this and Lemma \ref{boundaryterm} (ii), Lemma \ref{lem:Veps-es1} and Corollary \ref{L1-esti}.  This gives Theorem \ref{thm:smooth}(i).
\vspace{-0.3cm}
\item [(iv)] Case $DA\in L^n(\Omega)$ with $n\ge 3$,
we need to localized Lemma \ref{lem2.8} via a unit of partition  as in the proof of Theorem \ref{thm:convex-smooth} (ii). Then using the argument therein we derive \eqref{end-reg-es0-1}.
This gives Theorem \ref{thm:smooth}(ii).
See Section 2.2 for details.
\end{enumerate}

%

 \subsection{Proof of  Theorem \ref{thm:convex-smooth}}
Suppose that $\Omega$ is convex.  By Lemma 2.2(i), we have  ${\bf K}(\phi)\le 0$. Thus \eqref{e3.x3} gives
\begin{align}\label{e3.x3-con}
& \|\phi  DV_{A,\ez}(Du) \|^2_{L^2(\Omega)} + \|\phi U_{A,\ez}(Du) \|^2_{L^2(\Omega)}\nonumber\\
&\le C\|\phi f\|^2_{L^2(\Omega)} +C_1 \||DA|\phi V_{A,\ez}(Du)\|^2_{L^2(\Omega)}  +C_2 \||D\phi| V_{A,\ez}(Du)\|^2_{L^2(\Omega)}
\end{align}

\begin{proof} [Proof of Theorem \ref{thm:convex-smooth} (i).]
 Choose $\phi\in C^\fz_c(\rn)$ such that $\phi=1$ in $\Omega$. Then $D\phi=0$ in $\Omega$ and hence \eqref{e3.x3-con} reads as
\begin{align}\label{e3.x3-con1}
& \|   DV_{A,\ez}(Du) \|^2_{L^2(\Omega)} + \|U_{A,\ez}(Du) \|^2_{L^2(\Omega)} \le C\| f\|^2_{L^2(\Omega)} +C_1 \||DA| V_{A,\ez}(Du)\|^2_{L^2(\Omega)}
\end{align}

By Lemma \ref{lem:Veps-es1}  (i) and choosing $\eta$ sufficiently small such that
  $  C_ 1\eta\|DA\|^2_{ L^{q }}\le \frac14$, we obtain
  \begin{align*}
C_1 \||DA| V_{A,\ez}(Du)\|^2_{L^2(\Omega)}
&\le \frac14 \|   DV_{A,\ez}(Du) \|^2_{L^2(\Omega)}+ C \|DA\|^2_{ L^{q }}
\|V_{A,\ez}(Du)\| ^2_{L^1(\Omega)}.
  \end{align*}
Thanks to  this and  $ \|V_{A,\ez}(Du)\|_{L^1(\Omega)}^2\le   C\|f\|_{L^2(\Omega)}$ given in Corollary \ref{L1-esti},  from \eqref{e3.x3-con1} we  conclude
$$
 \frac34 \|   DV_{A,\ez}(Du) \|^2_{L^2(\Omega)} + \| U_{A,\ez}(Du) \|^2_{L^2(\Omega)}\nonumber\\
 \le C\| f\|^2_{L^2(\Omega)},
$$
that is, \eqref{end-reg-es0-1} holds.
\end{proof}
\begin{proof}[Proof of Theorem \ref{thm:convex-smooth}(ii).]
Since $DA\in L^n(\Omega)$, then there exists $r_\sharp>0$ depends on $n$ and $A$ such that
 $$C_1C_\sharp\|DA\|^2_{ L^{n }(\Omega \cap B(x,r_\sharp))}\le \frac14,\quad\quad\forall x\in\overline\Omega,$$
where $C_\sharp$ is as in Lemma \ref{lem:Veps-es1} (ii).
By this inequality and Lemma \ref{lem:Veps-es1}  (ii),  for any
  $\phi\in C^\infty_c(B(x,r_\sharp))  $,  one has
  \begin{align}\label{xxx1}
   C_1 \||DA|\phi V_{A,\ez}(Du)\|^2_{L^2(\Omega)}
  &\le  \frac14  \|\phi  D  V_{A,\ez}(Du)  \|^2_{L^2(\Omega)}\nonumber\\
&+     \frac{C_3}{4C_\sharp} [
  \| | D\phi    |V_{A,\ez}(Du) \|^2_{L^2(\Omega)} + C   \|\phi  V_{A,\ez}(Du) \|^2_{L^1(\Omega)}].
  \end{align}
Inserting this inequality into \eqref{e3.x3-con} yields
\begin{align}\label{g2}
& \frac34\|\phi  DV_{A,\ez}(Du) \|^2_{L^2(\Omega)} + \|\phi U_{A,\ez}(Du) \|^2_{L^2(\Omega)}\nonumber\\
&\le C\|\phi f\|^2_{L^2(\Omega)} +  \Big[C_2+\frac{C_3}{4C_\sharp} \Big] \||D\phi| V_{A,\ez}(Du)\|^2_{L^2(\Omega)} + C   \|\phi  V_{A,\ez}(Du) \|^2_{L^1(\Omega)}.
\end{align}

Next let $\{B_{r_k}\}_{1\le k\le  N}$ be a  covering of $\overline{\Omega}$ by balls $B_{r_k}$, with $r_\sharp/4\le r_k\leq r_\sharp$,
such that either $B_{r_k}$ is center on $\partial\Omega$, or $B_{r_k} \Subset\Omega$.
Note that the covering can be chosen in the way that the multiplicity $N$ of overlapping among the balls $B_{r_k}$ depends only on $n$.
Let $\{\phi_k\}_{k\in N}$ be a family of functions such that $\phi_{k}\in C^\infty_c(B_{r_k})$ and $|\nabla \phi_k|\le C_4(r_\sharp)^{-1}$, and that
$\{\phi_k^2\}_{1\le k\le  N}$ is a partition of unity associated with the covering $\{B_{r_k}\}_{k\in N}$.
Thus $\sum_{k\in N}\phi_k^2=1$ in $\overline{\Omega}$. By applying inequality \eqref{g2} with $\phi=\phi_k$ for each $k$, and summing the resulting inequalities, one obtains
\begin{align}\label{g2a}
& \frac34\|  DV_{A,\ez}(Du) \|^2_{L^2(\Omega)} + \| U_{A,\ez}(Du) \|^2_{L^2(\Omega)}\nonumber\\
&\le C\| f\|^2_{L^2(\Omega)} +  N C_4^2\Big[C_2+ \frac{C_3}{4C_\sharp}\Big ] \frac1{r_\sharp^2}\|V_{A,\ez}(Du)\|^2_{L^2(\Omega)} + C  \|  V_{A,\ez}(Du) \|^2_{L^1(\Omega)}.
\end{align}
Choose $\eta$ small enough so that $$N C_4^2\Big[C_2+\frac{C_3}{4C_\sharp} \Big] \frac1{r_\sharp^2}\eta\le \frac14.$$
According to Lemma \ref{cmineq2},  we have
\begin{align}\label{g2a}
& \frac12\|  DV_{A,\ez}(Du) \|^2_{L^2(\Omega)} + \| U_{A,\ez}(Du) \|^2_{L^2(\Omega)} \le C\| f\|^2_{L^2(\Omega)}  + C  \|  V_{A,\ez}(Du) \|^2_{L^1(\Omega)}.
\end{align}
By $ \|V_\ez(AD)\|_{L^1(\Omega)}^2\le  C\|f\|_{L^2(\Omega)}^2$, we obtain the desired \eqref{end-reg-es0-1}.
 \end{proof}


\subsection{ Proof of Theorem \ref{thm:smooth}}

\begin{lem}\label{lem2.8}
\begin{align}\label{e3.x3-gen3}
& \frac34 \| DV_{A,\ez}(Du) \|^2_{L^2(\Omega)} + \|U_{A,\ez}(Du) \|^2_{L^2(\Omega)}\le C \| f\|^2_{L^2(\Omega)} +C _1\||DA|V_{A,\ez}(Du)\|^2_{L^2(\Omega)} .
\end{align}
\end{lem}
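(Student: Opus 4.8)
The plan is to combine Lemma~\ref{L2} (the localized energy inequality) with Lemma~\ref{boundaryterm}(ii) (the boundary-term bound via the second fundamental form) and Lemma~\ref{lem:Veps-es1}(i) (the $L^q$-bound for $q>n$), using a partition of unity adapted to a scale on which $\Psi_{\mathcal B}$ is small. First I would fix $r_\ast\in(0,1)$ small enough that $C_\ast\Psi_{\mathcal B}(r_\ast)$ is below a threshold to be determined (this is exactly where the hypothesis $\Psi_{\mathcal B}(r_\ast)\le\dz_\ast$ of Theorem~\ref{thm:smooth} is used, with $\dz_\ast$ depending only on $n,p,L,\lip_\Omega,d_\Omega$ through $C_\ast$). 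Then I would cover $\overline\Omega$ by finitely many balls $B_{r_k}$ with $r_\ast/4\le r_k\le r_\ast$, each either centered on $\partial\Omega$ or compactly contained in $\Omega$, with bounded overlap $N=N(n)$, and take an associated partition of unity $\{\psi_k^2\}$ with $\psi_k\in C_c^\fz(B_{r_k})$ and $|D\psi_k|\le C(n)/r_\ast$.

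Next I would apply \eqref{e3.x3} with $\phi=\psi_k$, bound the boundary term ${\bf K}(\psi_k)$ by Lemma~\ref{boundaryterm}(ii), absorb the resulting $C_\ast\Psi_{\mathcal B}(r_\ast)\|\psi_k DV_{A,\ez}(Du)\|_{L^2}^2$ term into the left-hand side (this forces the smallness choice of $r_\ast$), and sum over $k$. Using $\sum_k\psi_k^2=1$, $\sum_k|D\psi_k|^2\lesssim N/r_\ast^2$, and the bounded overlap, the sum of the $\|U_{A,\ez}(Du)\|_{L^2(\Omega)}^2$ pieces reconstitutes $\|U_{A,\ez}(Du)\|_{L^2(\Omega)}^2$ up to a constant, and likewise for $\|DV_{A,\ez}(Du)\|_{L^2(\Omega)}^2$; the $\||D\psi_k|V_{A,\ez}(Du)\|_{L^2}^2$ terms collapse into $C(N,r_\ast)\|V_{A,\ez}(Du)\|_{L^2(\Omega)}^2$, which by Lemma~\ref{cmineq2} (with $\eta$ small) is in turn controlled by a small multiple of $\|DV_{A,\ez}(Du)\|_{L^2(\Omega)}^2$ plus $C\|V_{A,\ez}(Du)\|_{L^1(\Omega)}^2$. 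After absorbing all the $DV_{A,\ez}(Du)$ contributions on the left (choosing the thresholds so the total absorbed coefficient is at most $\tfrac14$, leaving $\tfrac34$), one is left with
\begin{align*}
\tfrac34\|DV_{A,\ez}(Du)\|^2_{L^2(\Omega)}+\|U_{A,\ez}(Du)\|^2_{L^2(\Omega)}
\le C\|f\|^2_{L^2(\Omega)}+C_1\||DA|V_{A,\ez}(Du)\|^2_{L^2(\Omega)},
\end{align*}
which is exactly \eqref{e3.x3-gen3}, using Corollary~\ref{L1-esti} to replace $\|V_{A,\ez}(Du)\|_{L^1(\Omega)}^2$ by $C\|f\|_{L^2(\Omega)}^2$. (Note the $\||DA|V_{A,\ez}(Du)\|_{L^2}^2$ term is deliberately kept on the right; it is disposed of afterwards using Lemma~\ref{lem:Veps-es1} in the respective cases (i)/(ii) of Theorem~\ref{thm:smooth}.)

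I expect the main obstacle to be the bookkeeping of constants in the absorption step: one must verify that the coefficient $C_\ast\Psi_{\mathcal B}(r_\ast)$ multiplying $\|\psi_k DV_{A,\ez}(Du)\|_{L^2}^2$, together with the contribution coming through Lemma~\ref{cmineq2} applied to the $|D\psi_k|$-terms, can \emph{simultaneously} be made small — and crucially that the threshold $\dz_\ast$ for $\Psi_{\mathcal B}$ can be chosen \emph{independently} of $r_\ast$ itself (so that it depends only on $n,p,L,\lip_\Omega,d_\Omega$), since $r_\ast$ is only then selected to make $\Psi_{\mathcal B}(r_\ast)\le\dz_\ast$. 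This works because the $|D\psi_k|$-terms are handled by Lemma~\ref{cmineq2} with a \emph{free} parameter $\eta$ chosen after $r_\ast$ is fixed, decoupling the two smallness requirements. A secondary technical point is checking that Lemma~\ref{boundaryterm}(ii) genuinely applies to each boundary ball $B_{r_k}$ with $r_k<1$, which is guaranteed by the constraint $r_\ast<1$ in the covering.
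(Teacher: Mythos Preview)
Your proposal is correct and follows essentially the same route as the paper: localize via a partition of unity $\{\psi_k^2\}$ at scale $r_\ast$, apply Lemma~\ref{L2} with $\phi=\psi_k$, bound ${\bf K}(\psi_k)$ by Lemma~\ref{boundaryterm}(ii), sum, then use Lemma~\ref{cmineq2} to absorb the $\|V_{A,\ez}(Du)\|_{L^2}^2$ contributions coming from the $|D\psi_k|$ terms, and finally invoke Corollary~\ref{L1-esti}. Your observation that the two smallness requirements decouple (the threshold $\dz_\ast$ for $\Psi_{\mathcal B}$ is fixed first from $C_\ast$, while the free parameter $\eta$ in Lemma~\ref{cmineq2} is chosen afterwards in terms of $r_\ast$) is exactly the mechanism the paper relies on, and is the key point of the argument.
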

\begin{proof}
Applying Lemma \ref{L2} and Lemma \ref{boundaryterm}(ii), we have
\begin{align}\label{e3.x3-gen}
& \|\phi  DV_{A,\ez}(Du) \|^2_{L^2(\Omega)} + \|\phi U_{A,\ez}(Du) \|^2_{L^2(\Omega)}\nonumber\\
&\le C\|\phi f\|^2_{L^2(\Omega)} +C _1\||DA|\phi V_{A,\ez}(Du)\|^2_{L^2(\Omega)}\nonumber \\
&\quad +[C_2+C_\ast\Psi_\cb(r)] \||D\phi| V_{A,\ez}(Du)\|^2_{L^2(\Omega)}
+ C_\ast\Psi_\cb(r) \|\phi V_{A,\ez}(Du)\|^2_{L^2(\Omega)}
\end{align}

Next let $\{B_{r_k}\}_{1\le k\le  N}$ be a  covering of $\overline{\Omega}$ by balls $B_{r_k}$, with $r_\ast/4\le r_k\leq r_\ast$,
such that either $B_{r_k}$ is center on $\partial\Omega$, or $B_{r_k} \Subset\Omega$.
Note that the covering can be chosen in the way that the multiplicity $N$ of overlapping among the balls $B_{r_k}$ depends only on $n$.
Let $\{\psi_k\}_{k\in N}$ be a family of functions such that $\psi_{k}\in C^\infty_c(B_{r_k})$ and $|D\psi_k|\le C_4(r_*)^{-1}$, and that
$\{\psi_k^2\}_{1\le k\le  N}$ is a partition of unity associated with the covering $\{B_{r_k}\}_{k\in N}$.  By applying inequality \eqref{g2} with $\phi=\psi_k$ for each $k$, and summing the resulting inequalities, one obtains
\begin{align*}
& \| DV_{A,\ez}(Du) \|^2_{L^2(\Omega)} + \|U_{A,\ez}(Du) \|^2_{L^2(\Omega)}\nonumber\\
&\le C\| f\|^2_{L^2(\Omega)} +C _1\||DA|V_{A,\ez}(Du)\|^2_{L^2(\Omega)}\nonumber \\
&\quad +NC_4^2[C_2+C_\ast\Psi_\cb(r_\ast)] \frac1{r_\ast^2}\| V_{A,\ez}(Du)\|^2_{L^2(\Omega)}
+ C_\ast\Psi_\cb(r_\ast) \| V_{A,\ez}(Du)\|^2_{L^2(\Omega)}
\end{align*}
Choose $\eta>0$ small enough such that
$$\Big\{NC_4^2[C_2+C_\ast\Psi_\cb(r_\ast)] \frac1{r_\ast^2}+ C_\ast\Psi_\cb(r_\ast)\Big\}\eta\le \frac14.$$
Applying Lemma 2.6 we have
\begin{align*}
& \frac34 \| DV_{A,\ez}(Du) \|^2_{L^2(\Omega)} + \|U_{A,\ez}(Du) \|^2_{L^2(\Omega)}\nonumber\\
&\le C \| f\|^2_{L^2(\Omega)} +C _1\||DA|V_{A,\ez}(Du)\|^2_{L^2(\Omega)}  + C\| V_{A,\ez}(Du)\|^2_{L^1(\Omega)}
\end{align*}
By Corollary  \ref{L1-esti} we further have \eqref{e3.x3-gen} as desired.
\end{proof}

\begin{proof}[Proof of Theorem \ref{thm:smooth}(i)]
Since \eqref{e3.x3-gen3} is similar to \eqref{e3.x3-con1},  by exactly the same argument as the proof of
Theorem \ref{thm:convex-smooth} (i), we obtain Theorem \ref{thm:smooth}(i).
\end{proof}

\begin{proof}[Proof of Theorem \ref{thm:smooth}(ii)]
It suffices to show that
\begin{align}\label{yyy1} C _1\||DA|V_{A,\ez}(Du)\|^2_{L^2(\Omega)} \le  \frac12 \| D  V_{A,\ez}(Du)  \|^2_{L^2(\Omega)} +  C\|f\|_{L^2(\Omega)}^2
  \end{align}
To this end, let $\{B_{r_k}\}_{1\le k\le  N}$ be a  covering of $\overline{\Omega}$ as in the  proof of Theorem \ref{thm:convex-smooth} (ii)
and    correspondingly
$\{\phi_k\}_{k\in N}$ be therein.  Write
\begin{align*}
C _1\||DA|V_{A,\ez}(Du)\|^2_{L^2(\Omega)} & = \sum_k C _1\||DA|\phi_k V_{A,\ez}(Du)\|^2_{L^2(\Omega)}.
  \end{align*}
Note that  $C _1\||DA|\phi_k V_{A,\ez}(Du)\|^2_{L^2(\Omega)}$ is bounded by \eqref{xxx1} with $\phi=\phi_k$.
Thus
 \begin{align}\label{xxx2}
 & C _1\||DA|V_{A,\ez}(Du)\|^2_{L^2(\Omega)} \nonumber\\
  &\le  \frac14 \sum_k \|\phi_k  D  V_{A,\ez}(Du)  \|^2_{L^2(\Omega)}
 +     \frac{C_3}{4C_\sharp} \Big[
 \sum_k  \| | D\phi_k    |V_{A,\ez}(Du) \|^2_{L^2(\Omega)} + C  \sum_k  \|\phi _k V_{A,\ez}(Du) \|^2_{L^1(\Omega)}\Big]\nonumber\\
&\le  \frac14 \| D  V_{A,\ez}(Du)  \|^2_{L^2(\Omega)} +     \frac{C_3}{4C_\sharp  } \Big[  \frac{NC_4^2}{  r_\sharp^2}
  \|   V_{A,\ez}(Du) \|^2_{L^2(\Omega)} + C   \| V_{A,\ez}(Du) \|^2_{L^1(\Omega)}\Big].
  \end{align}
Choose $\eta$ small enough so that
 $$\frac{C_3}{4C_\sharp  }   \frac{NC_4^2}{  r_\sharp^2}\eta\le \frac14.$$
Applying Lemma \ref{cmineq2},  we have
$$ C _1\||DA|V_{A,\ez}(Du)\|^2_{L^2(\Omega)} \le  \frac12 \| D  V_{A,\ez}(Du)  \|^2_{L^2(\Omega)} +    C   \| V_{A,\ez}(Du) \|^2_{L^1(\Omega)}.
$$
By $ \|V_\ez(AD)\|_{L^1(\Omega)}^2\le  C\|f\|_{L^2(\Omega)}^2$, one has  \eqref{yyy1} as desired.
%
\end{proof}

\section{Proof of key Lemma  \ref{L21}}\label{S2}

 In this section, we  always let
  $1<p<\fz$ and $\ez\in(0,1]$, and
suppose  that  $\Omega$ is a smooth domain and  $A\in \mathcal E_L(\Omega)\cap C^\fz(\Omega)$.
To get Lemma \ref{L21}, it is suffices to prove the following two lemmas.

\begin{lem}\label{lem:key-ineq}
For any   $u\in C^3(\Omega)$,  we have
\begin{align}\label{div-inequal-1}
 (\mathcal L_{\ez,A,p}u)^2&\ge \frac34\min\{1,(p-1)^2\} [U_{A,\ez}(Du)]^2
- C|DA|^2|V_{A,\ez}(Du)|^2\nonumber\\
& \quad
+   \mathrm{div}\,\big\{(|\sqrt{A}Du|^2+\epsilon)^{p-2}[\mathrm{div}\,(ADu)ADu-(ADu\cdot D )ADu]\big\}\  \mbox{in $\Omega$},
\end{align}
where  $C >1$ is  a constant depending on $n,p,L$.
\end{lem}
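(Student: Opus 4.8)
The plan is to reduce the whole statement to a pointwise computation performed in the intrinsic (Riemannian) gradient $w:=\sqrt A\,Du$ and the intrinsic Hessian $H:=\sqrt A\,D^2u\,\sqrt A$, so that the variable‑coefficient operator looks, modulo terms carrying a derivative of $A$, exactly like the flat $\epsilon$‑regularized $p$‑Laplacian. First I would expand $\mathcal L_{\epsilon,A,p}u=-\mathrm{div}\big(g^{(p-2)/2}ADu\big)$, with $g:=|\sqrt A Du|^2+\epsilon=|w|^2+\epsilon$, by the product rule, using $Dg=(DA)[Du,Du]+2D^2u\,ADu$ and $\mathrm{div}(ADu)=\mathrm{tr}(AD^2u)+(\mathrm{div}\,A)\cdot Du$. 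Collecting the terms free of $DA$ into a principal part
$$P:=-g^{(p-2)/2}\Big[\mathrm{tr}(H)+(p-2)g^{-1}\langle Hw,w\rangle\Big],$$
where one uses the cyclic‑trace identities $\mathrm{tr}(AD^2u)=\mathrm{tr}(H)$, $\langle D^2u\,ADu,ADu\rangle=\langle Hw,w\rangle$ and $\langle w,w\rangle=\langle ADu,Du\rangle$, one gets $\mathcal L_{\epsilon,A,p}u=P+E$ with $|E|\le C(n,p,L)|DA|\,|V_{A,\epsilon}(Du)|$, the powers of $g$ being balanced via $|Du|^2\le L\,g$. Since also $|P|\le C(n,p)\,g^{(p-2)/2}|H|=C(n,p)\,U_{A,\epsilon}(Du)$, Young's inequality gives, for any $\delta>0$,
$$(\mathcal L_{\epsilon,A,p}u)^2\ \ge\ P^2-2|P|\,|E|\ \ge\ P^2-\delta\,[U_{A,\epsilon}(Du)]^2-C_\delta\,|DA|^2|V_{A,\epsilon}(Du)|^2 .$$

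Next I would prove $P^2\ge \min\{1,(p-1)^2\}\,[U_{A,\epsilon}(Du)]^2+\mathbf I+(\text{controllable }DA\text{-error})$ with $\mathbf I$ the divergence in \eqref{I2}. For this I expand $\mathbf I=\mathrm{div}\{g^{p-2}[\mathrm{div}(ADu)ADu-(ADu\cdot D)ADu]\}$ directly: for $b=ADu$ one has $\mathrm{div}\big(\mathrm{div}(b)b-(b\cdot D)b\big)=(\mathrm{div}\,b)^2-\mathrm{tr}((Db)^2)$, and $Db=AD^2u+(DA)[\cdot,Du]$, $\mathrm{div}\,b=\mathrm{tr}(H)+(\mathrm{div}\,A)\cdot Du$. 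Expanding $g^{p-2}[(\mathrm{div}\,b)^2-\mathrm{tr}((Db)^2)]$ and $\langle Dg^{p-2},\mathrm{div}(b)b-(b\cdot D)b\rangle$, discarding every term carrying a factor $DA$ (each of the form $g^{p-2}|DA|\,|D^2u|\,|Du|$ or $g^{p-2}|DA|^2|Du|^2$, hence absorbed into $\delta[U_{A,\epsilon}]^2+C_\delta|DA|^2|V_{A,\epsilon}|^2$ by Young and the ellipticity bounds $|D^2u|\le L|H|$, $|Du|\le L|ADu|$), and using $\mathrm{tr}((AD^2u)^2)=|H|^2$ and $\langle D^2u\,ADu,\ AD^2u\,ADu\rangle=|Hw|^2$, one arrives at
$$P^2-\mathbf I\ =\ [U_{A,\epsilon}(Du)]^2+(p-2)^2 g^{p-4}\langle Hw,w\rangle^2+2(p-2)g^{p-3}|Hw|^2+(\text{controllable }DA\text{-error}).$$
When $p\ge 2$ the two middle terms are nonnegative and $\min\{1,(p-1)^2\}=1$, so the required bound is immediate.

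For $1<p<2$ one normalizes by $g^{p-2}$ and sets $m:=|w|^2/g\in[0,1]$, $\hat w:=w/|w|$, $\alpha:=\langle H\hat w,\hat w\rangle$, $\beta:=|H\hat w-\alpha\hat w|$; the remaining inequality becomes
$$|H|^2+(p-2)^2 m^2\alpha^2+2(p-2)m(\alpha^2+\beta^2)\ \ge\ (p-1)^2|H|^2 .$$
Using the elementary bound $|H|^2\ge \alpha^2+2\beta^2$ (valid since $n\ge2$: the $\hat w$‑row of $H$ contributes $\alpha^2+\beta^2$ and the $\eta$‑row contributes at least $\beta^2$), this reduces, after dividing by $(p-2)<0$, to $[(p-2)m^2+2m-p]\alpha^2+2(m-p)\beta^2\le0$; here $m-p<0$ and the downward parabola $m\mapsto(p-2)m^2+2m-p$ has roots $1$ and $p/(2-p)>1$, so it is $\le0$ on $[0,1]$. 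This is the "simple proof" of the key scalar inequality alluded to after Lemma \ref{L21}. Combining the two steps and choosing the Young constants so that the total coefficient lost on $[U_{A,\epsilon}(Du)]^2$ is at most $\tfrac14\min\{1,(p-1)^2\}$ yields \eqref{div-inequal-1}.

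The main obstacle I anticipate is not any single inequality but the bookkeeping in the second step: one must check that \emph{every} term produced when both $P^2$ and $\mathbf I$ are expanded and which carries at least one factor of $DA$ genuinely fits the template $\delta[U_{A,\epsilon}(Du)]^2+C_\delta|DA|^2|V_{A,\epsilon}(Du)|^2$, which forces keeping exact track of the powers of $g$ and inserting the ellipticity bounds $|Du|^2\le Lg$, $|D^2u|\le L|H|$, $|Du|\le L|ADu|$ at the right places. The sign analysis for $1<p<2$ is the other delicate point, but, as shown, it collapses to the one‑variable parabola estimate once $H$ is decomposed along $w$.
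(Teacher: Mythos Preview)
Your proposal is correct and follows essentially the same route as the paper. Both arguments pass to the intrinsic quantities $w=\sqrt A\,Du$, $H=\sqrt A\,D^2u\,\sqrt A$, isolate the ``clean'' expression
\[
g^{p-2}|H|^2+2(p-2)g^{p-3}|Hw|^2+(p-2)^2g^{p-4}\langle Hw,w\rangle^2,
\]
and absorb every $DA$-contaminated term by Young's inequality into $\delta[U_{A,\ez}]^2+C_\delta|DA|^2|V_{A,\ez}|^2$; the paper does this by expanding $\Pi=(\mathcal L_{\ez,A,p}u)^2-\mathbf I$ directly into $\Pi_1+\Pi_2+\Pi_3+\Pi_4$, while you first split $\mathcal L_{\ez,A,p}u=P+E$ and then expand $P^2-\mathbf I$, but the algebra is the same. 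For $1<p<2$ both proofs rest on the matrix inequality $|H|^2\ge 2|H\xi|^2-\langle H\xi,\xi\rangle^2$ for $|\xi|\le1$ (your $|H|^2\ge\alpha^2+2\beta^2$ is exactly this with $\xi=\hat w$); the only cosmetic difference is that the paper finishes with a Cauchy--Schwarz step, whereas you recast the remaining scalar inequality as the sign of the parabola $(p-2)m^2+2m-p$ on $[0,1]$ together with $m<p$, which is an equally valid (and arguably cleaner) endgame.
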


\begin{lem}\label{lem:key-ineq-2} For any   $u\in C^3(\Omega)$ and any $\eta\in(0,1)$,  we have
\begin{align}\label{div-inequal-2}
& \left|\mathrm{div}\,\big\{(|\sqrt{A}Du|^2+\epsilon)^{p-2}[\mathrm{div}\,(ADu)ADu-(ADu\cdot D )ADu]\big\}\right. \nonumber\\
&\quad\quad\quad\quad\quad
-\left.
 \mathrm{div}\,\big\{(|\sqrt{A}Du|^2+\epsilon)^{p-2}[\mathrm{tr}\,(AD^2u)ADu - AD^2u ADu]\big\}\right| \nonumber\\
&\le \eta [U_{A,\ez}(Du)]^2
 + \frac C\eta |DA|^2|V_{A,\ez}(Du)| ^2\quad\mbox{in $\Omega$},
\end{align}
where  $C >1$ is  a constant depending on $n,p,L$.
\end{lem}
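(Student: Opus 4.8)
The plan is to recognise the quantity on the left of \eqref{div-inequal-2} as the divergence of a \emph{single} vector field that is free of second derivatives of $A$. Write $G$ for the difference of the two bracketed vector fields, so that the expression to be estimated equals $\mathrm{div}\big\{(|\sqrt{A}Du|^2+\ez)^{p-2}G\big\}$. Using only the identities $\mathrm{div}(ADu)-\mathrm{tr}(AD^2u)=(\partial_lA_{lj})(Du)_j$ and the analogous one for the directional term $(ADu\cdot D)ADu-AD^2uADu$, a coordinate computation gives
\begin{equation*}
G_k=(\partial_lA_{lj})(Du)_j(ADu)_k-(ADu)_i(\partial_iA_{kj})(Du)_j,
\end{equation*}
so that $G$ depends only on $A$, $DA$ and $Du$ — no factor $D^2u$ occurs in $G$ — and $|G|\le C(n,L)|DA||Du|^2$.

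First I would expand $\mathrm{div}\big\{(|\sqrt{A}Du|^2+\ez)^{p-2}G\big\}$ by the product rule. Differentiating the coefficients in the two halves of $G$ produces \emph{a priori} the two second-order contributions $\sum_{i,j,k}(\partial_k\partial_iA_{ij})(Du)_j(ADu)_k$ and $-\sum_{i,j,k}(\partial_i\partial_kA_{ij})(Du)_j(ADu)_k$, and these cancel identically since mixed partials of $A$ commute ($A\in C^\fz$). This cancellation is the heart of the matter: it is exactly what keeps $D^2A$ out of the estimate, which is essential because no hypothesis beyond $DA\in L^q$ is at our disposal. What survives involves only $A$, $DA$, $Du$ and $D^2u$, and is bounded pointwise, with a constant depending on $n,p,L$, by
\begin{align*}
&(|\sqrt{A}Du|^2+\ez)^{p-2}\big(|DA|\,|D^2u|\,|Du|+|DA|^2|Du|^2\big)\\
&\qquad+(|\sqrt{A}Du|^2+\ez)^{p-3}\big(|DA|\,|D^2u|\,|Du|^3+|DA|^2|Du|^4\big),
\end{align*}
the weight exponent $p-3$ entering only through $D\big[(|\sqrt{A}Du|^2+\ez)^{p-2}\big]$.

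It then remains to rewrite this bound in terms of $U_{A,\ez}(Du)$, $V_{A,\ez}(Du)$ and $|DA|$ using only the ellipticity \eqref{ell}: the elementary inequalities $|Du|\le L|ADu|$, $|D^2u|\le L|\sqrt{A}D^2u\sqrt{A}|$, and $|Du|^2\le L|\sqrt{A}Du|^2\le L(|\sqrt{A}Du|^2+\ez)$ turn each of the four terms above into either $C|DA|^2|V_{A,\ez}(Du)|^2$ or $C|DA|\,U_{A,\ez}(Du)\,|V_{A,\ez}(Du)|$, and Young's inequality applied to the latter (placing $U_{A,\ez}(Du)$ on the $\eta$-side) yields the claimed estimate $\eta[U_{A,\ez}(Du)]^2+\frac C\eta|DA|^2|V_{A,\ez}(Du)|^2$. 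Besides the $D^2A$ cancellation, the only delicate point is that the term with weight exponent $p-3<0$ (in the singular range $1<p<2$) must be controlled \emph{uniformly in $\ez$}; this is precisely the role of the $\ez$-uniform bound $|Du|^2\le L(|\sqrt{A}Du|^2+\ez)$, which guarantees that no negative power of $\ez$ is ever generated. I expect the cancellation observation, together with this $\ez$-uniform bookkeeping, to be the main obstacle; the rest is routine.
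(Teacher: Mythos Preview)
Your proposal is correct and follows essentially the same route as the paper. Both arguments identify the difference vector field $G=({\rm div}\,A\cdot Du)\,ADu-[(ADu\cdot D)A]Du$ (the paper records this as identity \eqref{diff}), expand $\mathrm{div}\big\{(|\sqrt{A}Du|^2+\ez)^{p-2}G\big\}$ via the product rule into a ``weight times $\mathrm{div}\,G$'' piece and a ``gradient-of-weight dotted with $G$'' piece, observe that the two $D^2A$ contributions in $\mathrm{div}\,G$ cancel by equality of mixed partials, and finish with ellipticity and Young's inequality; the paper leaves the $D^2A$ cancellation implicit in its ``direct calculation'', whereas you make it explicit, which is a welcome clarification.
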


\begin{proof}[Proof  of Lemma \ref{L21}] The inequality \eqref{div-inequal} with ${\bf I}$  given by \eqref{I2} follows from \eqref{div-inequal-1}.
Moreover   the inequality \eqref{div-inequal} with ${\bf I}$  given by \eqref{I1} follows from \eqref{div-inequal-1} and   \eqref{div-inequal-2} with  $\eta=\frac1{4}\min\{1,(p-1)^2\}$.
\end{proof}



To prove Lemmas \ref{lem:key-ineq} and \ref{lem:key-ineq-2} we need the following two lemmas.
\begin{lem}\label{lem3.3}
For any symmetric $n\times n$ matrix $M$ and any  vector $\xi\in\rn$ with $|\xi|\le 1$, we have
\begin{equation}\label{matrixin}|M|^2\ge 2|M\xi|^2-|M\xi\cdot\xi|^2.
\end{equation}
\end{lem}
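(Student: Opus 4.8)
\textbf{Proof proposal for Lemma \ref{lem3.3}.}

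The plan is to prove the pointwise matrix inequality $|M|^2\ge 2|M\xi|^2-|M\xi\cdot\xi|^2$ for symmetric $M$ and $|\xi|\le1$ by reducing it to a clean statement about the vector $v:=M\xi$ and then estimating $|M|^2$ from below using only the column/row of $M$ in the direction of $\xi$. First I would dispose of the trivial case $\xi=0$, and otherwise normalise: since the right-hand side is monotone in $|\xi|$ when $|\xi|\le 1$ (enlarging $\xi$ towards a unit vector only increases $2|M\xi|^2-|M\xi\cdot\xi|^2$, as one checks by scaling $\xi=s e$ with $|e|=1$ and differentiating in $s\in(0,1]$), it suffices to treat the case $|\xi|=1$. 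So assume $|\xi|=1$ and set $v=M\xi$, $a=\langle v,\xi\rangle=\langle M\xi,\xi\rangle$.

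Next I would choose an orthonormal basis $e_1,\dots,e_n$ of $\rn$ with $e_1=\xi$. In this basis the first column of $M$ is exactly $v=M\xi$, and by symmetry of $M$ the first row equals the transpose of the first column. Writing $M=(m_{ij})$ we then have $m_{11}=a$ and $\sum_{j=1}^n m_{1j}^2=\sum_{j=1}^n m_{j1}^2=|v|^2$. Hence
\begin{equation*}
|M|^2=\sum_{i,j}m_{ij}^2\ \ge\ \sum_{j=1}^n m_{1j}^2+\sum_{i=2}^n m_{i1}^2 = |v|^2+\bigl(|v|^2-m_{11}^2\bigr)=2|v|^2-a^2,
\end{equation*}
which is precisely $2|M\xi|^2-|M\xi\cdot\xi|^2$; here the inequality just drops the nonnegative contribution of the entries $m_{ij}$ with $i,j\ge2$, and we used $m_{11}^2=a^2$ to avoid double-counting the corner entry.

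The argument is essentially a bookkeeping exercise, so I do not expect a genuine obstacle; the only point requiring mild care is the reduction to $|\xi|=1$, i.e.\ verifying that $s\mapsto 2s^2|Me|^2-s^4\langle Me,e\rangle^2$ is nondecreasing on $[0,1]$, which follows since its derivative $4s|Me|^2-4s^3\langle Me,e\rangle^2=4s(|Me|^2-s^2\langle Me,e\rangle^2)$ is nonnegative because $|\langle Me,e\rangle|\le|Me|$ and $s\le1$. One could alternatively skip this reduction entirely and carry the factor $|\xi|$ through the basis computation, choosing $e_1=\xi/|\xi|$ and noting $m_{11}=\langle M\xi,\xi\rangle/|\xi|$, $\sum_j m_{1j}^2=|M\xi|^2/|\xi|^2$, which gives $|M|^2\ge 2|M\xi|^2/|\xi|^2-\langle M\xi,\xi\rangle^2/|\xi|^2\ge 2|M\xi|^2-|M\xi\cdot\xi|^2$ for $|\xi|\le1$; I would present whichever version is shorter in the final write-up.
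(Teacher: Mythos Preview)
Your proof is correct and follows essentially the same approach as the paper: both reduce to the case $|\xi|=1$ and then, after an orthogonal change of basis sending $\xi$ to a coordinate vector, read off the inequality by keeping only the first row and first column of $M$ and subtracting the doubly counted corner entry. The only cosmetic difference is in the reduction step---you argue monotonicity of $s\mapsto 2s^2|Me|^2-s^4\langle Me,e\rangle^2$ via its derivative, while the paper applies the unit-vector case to $\xi/|\xi|$ and uses Cauchy--Schwarz to pass from $2|M\xi|^2/|\xi|^2-|M\xi\cdot\xi|^2/|\xi|^4$ down to $2|M\xi|^2-|M\xi\cdot\xi|^2$; these are equivalent. (A small typo in your alternative route: with $e_1=\xi/|\xi|$ one has $m_{11}=\langle M\xi,\xi\rangle/|\xi|^2$, not $/|\xi|$, and the resulting lower bound is $2|M\xi|^2/|\xi|^2-\langle M\xi,\xi\rangle^2/|\xi|^4$, exactly as in the paper.)
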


\begin{proof}
If  $|\xi|=1$, then $\xi= Oe_n$ for some orthogonal matrix $O$.
 Write $O^TMO=(m_{ij})_{1\le i,j\le n}$.
We have $$ |M|^2=|O^TMO|^2=\sum_{1\le i,j\le n}m_{ij}^2\ge \sum_{i=1}^n m_{in}^2 +\sum_{i=1}^n m_{ni}^2-m_{nn}^2.$$
Then \eqref{matrixin} follows from
  $$m_{nn} =O^TMOe_n\cdot e_n=MOe_n\cdot Oe_n= M\xi\cdot\xi,$$
and
$$\sum_{i=1}^n m_{ni}^2=\sum_{i=1}^n m_{in}^2 =|O^TMOe_n|^2=|M\xi|^2,$$
where we note $O^TMOe_n=(m_{in})_{1\le i\le n}$ and use symmetry.

Below assume $|\xi|<1 $. If  $|\xi|=0$ and \eqref{matrixin} holds trivially.
It $0<|\xi|<1$, applying \eqref{matrixin} to $\xi/|\xi|$, we have
$$|M|^2\ge 2\frac{|M\xi|^2}{|\xi|^2}-\frac{|M\xi\cdot\xi|^2}{|\xi|^4}=    \frac{|M\xi|^2}{|\xi|^2}+
 \frac{ |M\xi|^2|\xi|^2- |M\xi\cdot\xi|^2}{|\xi|^4}.$$
Since the Cauchy-Schwartz inequality gives   $|M\xi|^2|\xi|^2\ge |M\xi\cdot\xi|^2$, by $|\xi|<1$ and
$|\xi|+ 1/|\xi|\ge 2$ we have
$$|M|^2\ge    \frac{|M\xi|^2}{|\xi|^2}+
|M\xi|^2|\xi|^2- |M\xi\cdot\xi|^2 \ge 2 |M\xi|^2-|M\xi\cdot\xi|^2.$$

\end{proof}

\begin{rem}\rm
After diagonalizing   $M $, the inequality \eqref{matrixin}  reads as
  \begin{align}\label{I2-eq12}
 \left(\sum^n_{i=1}\xi_i^2\lambda_i\right)^2-2\sum^n_{i=1}\xi_i^2\lambda_i^2+\sum^n_{i=1}\lambda_i^2\geq 0,\quad  \quad\mbox{$\forall\xi\in\rn$ with $|\xi|\le 1$},
\end{align}
which was proved by \cite[Lemma  3.2]{CMaz} and used to prove Lemma \ref{lem:key-ineq} with $A=I_n$.
 Above we give a much simpler proof to \eqref{I2-eq12}.
 \end{rem}

\begin{lem}\label{lem3.2-a}
Assume that  $M $ is an $n\times n$ symmetric   matrix with $\frac1 L\le M\xi\cdot\xi\le L$ for all $\xi\in\rn$ with $|\xi|=1$.  Then
$\frac1 {\sqrt L}\le \sqrt M\xi\cdot\xi\le \sqrt L$ for all $\xi\in\rn$ with $|\xi|=1$. Moreover, for any $n\times n$ matrix  $H$,  we have
\begin{equation}\label{equv0}
\frac{1}{\sqrt{L }}|H|\leq|\sqrt{M}H|\leq \sqrt{L } |H| \quad\textrm{and}\quad \frac{1}{\sqrt L }|H|\leq|H\sqrt{M}|\leq \sqrt{L } |H|.
\end{equation}
\end{lem}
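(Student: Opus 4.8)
The plan is to prove Lemma~\ref{lem3.2-a} in three elementary steps, using the spectral decomposition of $M$ and the fact that $\sqrt{M}$ is defined as the unique symmetric positive-definite square root.

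First I would diagonalize: since $M$ is symmetric with $\frac1L\le M\xi\cdot\xi\le L$ on the unit sphere, its eigenvalues $\mu_1,\dots,\mu_n$ all lie in $[\frac1L,L]$. Writing $M=O^T\,\mathrm{diag}(\mu_i)\,O$ for an orthogonal $O$, the symmetric square root is $\sqrt{M}=O^T\,\mathrm{diag}(\sqrt{\mu_i})\,O$, whose eigenvalues $\sqrt{\mu_i}$ lie in $[\frac1{\sqrt L},\sqrt L]$. Hence $\frac1{\sqrt L}\le\sqrt{M}\xi\cdot\xi\le\sqrt L$ for all unit $\xi$; equivalently, the operator norms satisfy $\|\sqrt{M}\|\le\sqrt L$ and $\|(\sqrt{M})^{-1}\|\le\sqrt L$, since $(\sqrt{M})^{-1}=O^T\,\mathrm{diag}(\mu_i^{-1/2})\,O$ has eigenvalues in the same range.

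Next, for the matrix inequalities \eqref{equv0}, I would view $H$ column-by-column. The upper bound $|\sqrt{M}H|\le\sqrt L\,|H|$ is immediate from $|\sqrt{M}v|\le\sqrt L\,|v|$ applied to each column $v$ of $H$ and summing squares (the Frobenius norm is the $\ell^2$-sum of column norms). For the lower bound, write $H=(\sqrt{M})^{-1}(\sqrt{M}H)$ and apply the upper bound with $(\sqrt{M})^{-1}$ in place of $\sqrt{M}$, using $\|(\sqrt{M})^{-1}\|\le\sqrt L$; this gives $|H|\le\sqrt L\,|\sqrt{M}H|$, i.e. $\frac1{\sqrt L}|H|\le|\sqrt{M}H|$. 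The second pair of inequalities, for $H\sqrt{M}$, follows by the same argument applied to rows instead of columns, or simply by transposing: $|H\sqrt{M}|=|(\sqrt{M}H^T)^T|=|\sqrt{M}H^T|$ and $|H^T|=|H|$, since $\sqrt{M}$ is symmetric and the Frobenius norm is transpose-invariant.

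There is no real obstacle here; the statement is a routine consequence of spectral calculus for symmetric positive-definite matrices. The only point requiring a word of care is the passage from the quadratic-form bounds on $M$ to the genuine operator-norm (largest/smallest eigenvalue) bounds — this uses that $M$ is symmetric, so its numerical range equals the convex hull of its spectrum — and the observation that the Frobenius norm $|\cdot|$ used throughout the paper is submultiplicative against the operator norm, which is what makes the column-wise/row-wise estimates legitimate.
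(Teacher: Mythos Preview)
Your proof is correct and takes essentially the same approach as the paper: both diagonalize $M$ via an orthogonal matrix to locate the eigenvalues of $\sqrt{M}$ in $[\frac1{\sqrt L},\sqrt L]$, and both deduce the Frobenius-norm bounds from this spectral information. The only cosmetic difference is that the paper computes $|\sqrt{M}H|^2=|\mathrm{diag}(\sqrt{\lambda_i})\,O^TH|^2$ directly and reads off the bounds from the diagonal entries, whereas you phrase the same estimate via operator norms applied column-by-column (and handle $H\sqrt{M}$ by transposition rather than repeating the computation); either way the content is identical.
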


\begin{proof}[Proof of Lemma \ref{lem3.2-a}]
We can find an orthogonal matrix $O$ such that
 $O^TMO={\rm diag}\{\lz_1,\cdots,\lz_n\}$ with $\frac1L\le \lz_1\le\cdots\le \lz_n\le L$
and $O^T\sqrt {M}O={\rm diag}\{\sqrt {\lz_1},\cdots,\sqrt {\lz_n}\}$ with $\frac1{\sqrt L}\le \lz_1\le\cdots\le \lz_n\le \sqrt {L}$. Thus
$\frac1 {\sqrt L}\le \sqrt M\xi\cdot\xi\le \sqrt L$ for all $\xi\in\rn$ with $|\xi|=1$.

 Given any matrix $H$, we have
$$ |\sqrt{M}H|^2 =   |O^T\sqrt{M}OO^TH|^2 =|\mathrm{diag}\Big(\lambda_1 ,\cdots,\lambda_n \Big)O^TH|,$$
and hence
$$ \frac1{  L}|H|^2\le \lz_1 |O^TH|^2\le |\sqrt{M}H|^2\le \lz_n |O^TH|^2\le L|H|^2.$$
 The same  result holds for $|H\sqrt M|^2$.

\end{proof}

Below we prove Lemma \ref{lem:key-ineq} and Lemma \ref{lem:key-ineq-2}.
\begin{proof}[Proof of Lemma \ref{lem:key-ineq}]


For simple, we write
$$\Pi:=[\mathcal L_{\ez,A,p}u]^2-\mathrm{div}\,\big\{(|\sqrt{A}Du|^2+\epsilon)^{p-2}[\mathrm{div}\,(ADu)ADu-(ADu\cdot D )ADu]\big\},$$
and then,   \eqref{div-inequal-1} is equivalent to
\begin{align}\label{L-D}
\Pi
&\ge  \frac34\min\{1,(p-1)^2\}(| D_Au|^2+\epsilon)^{p-2}|   D^2_Au  |^2 - C(n,p,L)|DA|^2|V_{A,\ez}(Du)|^2.
\end{align}


We prove \eqref{L-D} as below. For vectors
 $a,b\in \rr^n$, we  use $\langle a,b\rangle=a\cdot b$ to denote  the usual inner product.
We always use Einstein summation convention, that is, $a_ib_i=a^ib_i=\sum_{i=1}^na_ib_i$.
 For short,
we always write
$Dv=(\partial_i v)_{1\le i\le n}=(v_i)_{1\le i\le n}$;
write \begin{equation}
  D_A v := \sqrt{A}Du,\quad \quad D^2_A v:= \sqrt{A} D^2 v \sqrt{A};
\end{equation}
  also write $$\xi\cdot Dv=\xi_i\partial_iv , \quad \langle (DA)\xi,\xi\rangle =\xi^T(DA)\xi =(a_k^{ij}\xi_i\xi_j)_{1\le k\le n}.$$

 Firstly, a direct calculation yields
\begin{align*}
  \mathcal L_{\ez,A,p}u&=\mathrm{div}\,\left((| D_Au|^2+\epsilon)^{\frac{p-2}{2}}ADu\right)\\
&=(| D_Au|^2+\varepsilon)^{\frac{p-2}{2}}\mathrm{div}\,\left(ADu\right) +(p-2)(| D_Au|^2+\epsilon)^{\frac{p-4}{2}}\Big\langle D\frac{| D_Au|^2}{2},ADu\Big\rangle,
\end{align*}
and hence
\begin{align}\label{LezAp2}
 [\mathcal L_{\ez,A,p}u]^2& =(|D_Au|^2+\epsilon)^{p-2}[\mathrm{div}\,\left(ADu\right)]^2\nonumber\\
 &\quad+2(p-2)(|D_Au|^2+\epsilon)^{p-3}\mathrm{div}\,(ADu)\Big\langle D\frac{|D_Au|^2}{2},ADu\Big\rangle\nonumber\\
  &\quad+(p-2)^2(|D_Au|^2+\epsilon)^{p-4}\Big\langle D\frac{|D_Au|^2}{2},ADu\Big\rangle^2.
\end{align}

We also observe that
\begin{align}\label{equa1}
  &\mathrm{div}\,\big\{(| D_Au|^2+\varepsilon)^{p-2}[\mathrm{div}\,(ADu)ADu-(ADu\cdot D )ADu]\big\} \nonumber \\
  &= (| D_Au|^2+\varepsilon)^{p-2}\mathrm{div}\,[\mathrm{div}\,(ADu)ADu-(ADu\cdot D )ADu]\nonumber \\
  &\quad +2 (p-2)(| D_Au|^2+\varepsilon)^{p-3}\mathrm{div}\,(ADu)\,\Big\langle D \frac{| D_Au|^2}{2} ,
   ADu \Big\rangle \nonumber \\
  &\quad - 2 (p-2)(| D_Au|^2+\varepsilon)^{p-3}\,\Big\langle D \frac{| D_Au|^2}{2},
    (ADu\cdot D )ADu \Big\rangle.
\end{align}
Write
\begin{align}\label{divdiv}
  &\mathrm{div}\,[\mathrm{div}\,(ADu)ADu-(ADu\cdot D )ADu] \nonumber\\ &
  = [\mathrm{div}\, (ADu)]^2 + (ADu\cdot D) \mathrm{div}\,(ADu) -  (ADu\cdot D ) \mathrm{div}\,(ADu) -  \big(\partial_i(a^{js} u_s)\big) \cdot \big(\partial_j (a^{ik} u_k)\big)\nonumber\\
 & = [\mathrm{div}\, (ADu)]^2-  \big(\partial_i(a^{js} u_s)\big) \cdot \big(\partial_j (a^{ik} u_k)\big).
 \end{align}
Since $$
-  \big(\partial_i(a^{js} u_s)\big) \cdot \big(\partial_j (a^{ik} u_k)\big)=
 - a^{js}_{i}u_s a^{ik}_j u_k-2a^{js}u_{is}a^{ik}_{j}u_k-a^{ik}u_{jk}a^{js}u_{is},$$
and
\begin{align*}
u_{jk}a^{js}u_{is}a^{ik}&=\mathrm{tr}\,(A D^2 u A D^2 u )\nonumber\\
&=\mathrm{tr}\,(\sqrt{A} D^2 u A D^2 u \sqrt{A})= \mathrm{tr}\, (\sqrt{A}D^2 u \sqrt{A})^2=\mathrm{tr}\,(D_A^2 u )^2=|D_A^2 u |^2,
\end{align*}
we have
\begin{align}\label{divdiv}
  &\mathrm{div}\,[\mathrm{div}\,(ADu)ADu-(ADu\cdot D )ADu] \nonumber\\
  & = [\mathrm{div}\, (ADu)]^2-  |D_A^2 u |^2- a^{js}_{i}u_s a^{ik}_j u_k-2a^{js}u_{is}a^{ik}_{j}u_k.
 \end{align}

From \eqref{LezAp2}, \eqref{equa1} and \eqref{divdiv}, we deduce that
\begin{align}\label{equa2}
\Pi&= (| D_Au|^2+\varepsilon)^{p-2} |D_A^2 u |^2\nonumber \\
&\quad+2(p-2) (| D_Au|^2+\varepsilon)^{p-3} \Big\langle D\frac{| D_Au|^2}{2}, (ADu\cdot D )ADu \Big\rangle \nonumber \\
&\quad+
(p-2)^2 (| D_Au|^2+\varepsilon)^{p-4} \Big\langle D\frac{| D_Au|^2}{2},ADu\Big\rangle^2 \nonumber \\
&\quad+(| D_Au|^2+\varepsilon)^{p-2}[ a^{js}_{i}u_s a^{ik}_j u_k+2a^{js}u_{is}a^{ik}_{j}u_k]\nonumber\\
 &=: \Pi_1+\Pi_2+\Pi_3+\Pi_4.
\end{align}


Next we bound $\Pi_2$, $\Pi_3$ and $\Pi_4$   as below.
To bound $\Pi_2$, write
\begin{equation}\label{Df1}
 D\frac{|D_Au|^2}{2}= \frac12 D\langle ADu,Du\rangle = D^2uADu +
 \frac12 (Du)^T (DA)Du. \end{equation}
Then
\begin{align*}
\langle D\frac{|D_Au|^2}{2}, ADu\rangle
& =\langle D^2uADu,ADu\rangle + \frac12\langle  (Du)^T (DA)Du,ADu\rangle\\
&=
(D_A u)^T D^2_A u  D_A u+ \frac12 (Du)^T [(ADu\cdot D)A]Du,
\end{align*}
that is,
\begin{align*}
 \Big\langle D\frac{|\sqrt{A}Du|^2}{2}, ADu\Big\rangle^2 & =[(D_A u)^T D^2_A u   D_A u]^2+ \frac14 [  (Du)^T [(ADu\cdot D)A]Du]^2 \nonumber \\
&\quad +  [(D_Au)^T D^2_A u D_A u]  (Du)^T [(ADu\cdot D)A]Du. \nonumber
\end{align*}
Multiplying both side by $(p-2)^2$ and using Cauchy-Schwartz inequality, we obtain
\begin{align}\label{DA3}
  &(p-2)^2\Big\langle D\frac{|\sqrt{A}Du|^2}{2}, ADu\Big\rangle^2\nonumber\\
  &\quad\ge   (p-2)^2[(D_A u)^T D^2_A u   D_A u]^2 -\eta |D_A^2u|^2 |D_Au|^4-
\frac{C}\eta |D A|^2|D_Au|^4|Du|^2.
\end{align}
Multiplying both side by $ (|D_Au|^2+\ez)^{p-4}$,   we obtain
\begin{align}\label{DA3}
 \Pi_2 & \ge   (p-2)^2(|D_Au|^2+\ez)^{p-4} [(D_A u)^T D^2_A u   D_A u]^2 -\eta \Pi_1-
\frac{C}\eta |D A|^2 |V_{A,\ez}(Du)|^2.
\end{align}

To bound $\Pi_3$,  by \eqref{Df1} and
$$
(ADu\cdot D )ADu= AD^2u  ADu+[(ADu\cdot D)A] Du,$$
we have
\begin{align*}
& \Big\langle D \frac{|\sqrt{A}Du|^2}{2}, \,(ADu\cdot D )ADu \Big\rangle\\
 &=\Big\langle  D^2uADu + \frac12 (Du)^T (DA)Du, AD^2u  ADu+[(ADu\cdot D)A] Du\Big\rangle\\
&= \langle  D^2u  A D u  ,AD^2u  ADu\rangle+
\langle  D^2u   A D u,[(ADu\cdot D)A] Du \rangle\\
&\quad+  \frac12\langle    (Du)^T (DA)Du, AD^2u  ADu  \rangle+
\frac12\langle    (Du)^T (DA)Du,  [(ADu\cdot D)A] Du \rangle.
 \end{align*}
By the Cauchy-Schwartz inequality one gets
\begin{align}\nonumber\label{DA4}
& 2(p-2)\Big\langle D \frac{|\sqrt{A}Du|^2}{2}, \,(ADu\cdot D )ADu \Big\rangle\nonumber\\
&\ge 2(p-2)  |D_A2uD_Au|^2-  \eta |D_A^2u|^2 |D_Au|^2-
\frac{C}\eta |D  A|^2|D_Au|^2|Du|^2.
\end{align}
Multiplying both side by $ (|D_Au|^2+\ez)^{p-3}$   we obtain
\begin{align}\label{DA3}
 \Pi_3 & \ge   (p-2) (|D_Au|^2+\ez)^{p-3} |D^2_A u   D_A u|^2 -\eta \Pi_1-
\frac{C}\eta |D A|^2 |V_{A,\ez}(Du)|^2.
\end{align}

For $\Pi_4$, we observe that
$$ a^{js}_{i}u_s a^{ik}_j u_k\ge -|D A|^2|Du|^2.$$
By Lemma \ref{lem3.2-a} and the Cauchy-Schwartz inequality,  one also has
$$2a^{js}u_{is}a^{ik}_{j}u_k \ge -|AD^2u||D A||Du|\ge -L|D_A^2u||DA||Du|\ge
-\eta| D_A^2 u|^2-\frac C\eta |DA|^2|Du|^2.
$$
Thus
 $$\Pi_4\ge -\eta \Pi_1-\frac C\eta |DA|^2|V_{A,\ez}(Du)|^2.$$

From the lower bound of $\Pi_2, \Pi_3$ and $\Pi_4$,
 noting $(| D_Au|^2+\epsilon)^{p-2}  | Du |^2\le L^2
|V_{A,\ez}(Du)|^2$,
we attain
\begin{align}\label{equa4}
\Pi&\ge  (1-3\eta) (| D_Au|^2+\epsilon)^{p-2} |D_A^2u|^2+2(p-2) (| D_Au|^2+\epsilon)^{p-3}  |D_A^2uD_Au|^2 \nonumber \\
&\quad+
(p-2)^2 (| D_Au|^2+\epsilon)^{p-4}   |(D_Au)^TD_A^2uD_Au|^2-\frac C\eta |DA|^2|V_{A,\ez}(Du)|^2.
\end{align}

 If $p\ge 2$, taking $\eta =\frac1{16}$ in \eqref{equa4}, noting that the second and third terms in the right hand side of \eqref{equa4} are  nonnegative,  we have  \eqref{L-D}.

Below we assume that $ 1<p<2$.   Then  $1-p(2-p)=(p-1)^2>0$ and hence,  $1>p(2-p)>0$.
We split the coefficient $(1-3\eta)$ of  $(| D_Au|^2+\varepsilon)^{p-2} |D_A^2u|^2$ in  the right hand side of \eqref{equa4}   as
\begin{align} \label{Inst1}(1-3\eta) &=  [1-p(2-p)-3\eta]
  + p(2-p)  \ge \frac 34  (p-1)^2
 + p(2-p)
\end{align}
where we choose  $\eta= \frac1{16}(p-1)^2$.   Then
\begin{align}\label{equa5}
\Pi&\ge \frac34(p-1)^2  (| D_Au|^2+\varepsilon)^{p-2} |D_A^2u|^2-\frac C\eta |DA|^2  |V_{A,\ez}(Du)|^2\nonumber \\
&\quad+(2-p)(| D_Au|^2+\varepsilon)^{p-2} \Big\{p|D^2_Au|^2-2\frac{|D^2_Au D_Au|^2}{|D_Au|^2+\ez}+ (2-p)
\frac{|(D_Au)^TD^2_Au D_Au|^2}{(|D_Au|^2+\ez)^2}  \Big\}.
\end{align}
Applying \eqref{matrixin} to $D^2_Au$ and $D_Au/\sqrt{|D_Au|^2+\ez}$, and multiplying both side  $p$,
 one has
\begin{align} \label{Inst2}p |D^2_Au|^2&\ge 2p \frac{|D^2_Au D_Au|^2}{|D_Au|^2+\ez} -p
\frac{|(D_Au)^TD^2_Au D_Au|^2}{(|D_Au|^2+\ez)^2}\nonumber\\
&=2  \frac{|D^2_Au D_Au|^2}{|D_Au|^2+\ez}+ \Big[(2p-2) \frac{|D^2_Au D_Au|^2}{|D_Au|^2+\ez}-p
\frac{|(D_Au)^TD^2_Au D_Au|^2}{(|D_Au|^2+\ez)^2}\Big]
.\end{align}
Since
  Cauchy-Schwartz inequality gives
$$
 |D^2_Au D_Au|^2  \ge
\frac{|(D_Au)^TD^2_Au D_Au|^2}{ |D_Au|^2+\ez} ,$$
we obtain
\begin{equation} \label{Inst3} p |D^2_Au|^2\ge 2\frac{|D^2_Au D_Au|^2}{|D_Au|^2+\ez}- (2-p)
\frac{|(D_Au)^TD^2_Au D_Au|^2}{(|D_Au|^2+\ez)^2}.
\end{equation}
This inequality shows us the third term in the right hand side of \eqref{equa5} is nonnegative.
Hence,  \eqref{L-D}  follows from \eqref{equa5}.
 \end{proof}

\begin{rem} \rm In the case $A=I_n$, we can take $\eta=0$ in the above proof to get
\begin{align}\label{equax}
\Pi &=[{\rm div}(|Du|^2+\ez)^{\frac{p-2}2}Du]^2-{\rm div}(|Du|^2+\ez)^{p-2}[\Delta u Du-D^2uDu] \nonumber\\
&\ge \min\{1,(p-1)^2\} (|Du|^2+\ez)^{ {p-2} }|D^2u|^2.
\end{align}
Indeed, when $A=I_n$,  \eqref{equa2} becomes
\begin{align}\label{equax}
\Pi
&= (| D u|^2+\epsilon)^{p-2} \Big[|D^2 u |^2 +2(p-2)   \frac{|D^2uDu|^2}{|Du|^2+\ez}
+ (p-2)^2   \frac{|(Du)^TD^2uDu|^2}{(|Du|^2+\ez)^2}\Big ].
\end{align}
If $p\ge2$, one has $\Pi \ge (| D u|^2+\epsilon)^{p-2}|D^2 u |^2$.
If $1<p<2$,    by splitting the coefficient 1 of $|D^2 u |^2$ as $(p-1)^2 + p(2-p) $ we have
\begin{align*}
\Pi&= (p-1)^2(| D u|^2+\epsilon)^{p-2} |D^2 u |^2 \\
&\quad+(2-p)(| D u|^2+\epsilon)^{p-2}[ p|D^2u|^2-2   \frac{|D^2uDu|^2}{|Du|^2+\ez}
+ (2-p)   \frac{|(Du)^TD^2uDu|^2}{(|Du|^2+\ez)^2} ].
\end{align*}
Applying Lemma \ref{lem3.3} to $D^2u$ and $Du/ \sqrt{ |Du|^2+\ez }$, by an argument same as \eqref{Inst3} we  obtain
$$p |D^2 u|^2\ge 2\frac{|D^2 u D u|^2}{|D u|^2+\ez}- (2-p)
\frac{|(D u)^TD^2 u D u|^2}{(|D u|^2+\ez)^2}.$$
Hence   $
\Pi \ge (p-1)^2(| D u|^2+\epsilon)^{p-2} |D^2 u |^2 $ as desired.
\end{rem}

\begin{proof}[Proof of Lemma \ref{lem:key-ineq-2}] A direct calculation gives
\begin{align*}
 &\mathrm{div}\,\big\{(|\sqrt{A}Du|^2+\epsilon)^{p-2}[\mathrm{div}\,(ADu)ADu-(ADu\cdot D )ADu]\big\}\\
&\quad\quad\quad-\mathrm{div}\,\big\{(|\sqrt{A}Du|^2+\epsilon)^{p-2}[\mathrm{tr}(AD^2u)ADu- AD^2uADu ]\big\}\\
&\quad=(|\sqrt{A}Du|^2+\epsilon)^{p-2}\\
&\quad\quad\quad\times \mathrm{div}\left\{[\mathrm{div}\,(ADu)ADu-(ADu\cdot D )ADu] -   [\mathrm{tr}(AD^2u)ADu- AD^2uADu ]\right\} \\
&\quad\quad+   D(|\sqrt{A}Du|^2+\epsilon)^{p-2} \\
&\quad\quad\quad\cdot  \left\{[\mathrm{div}\,(ADu)ADu-(ADu\cdot D )ADu]-[\mathrm{tr}(AD^2u)ADu- AD^2uADu ]\right\}\\
&\quad=:J_1+J_2.
\end{align*}

It then suffices to prove that, for $i=2$,
$$ J_i\ge   -\eta [U_{A,\ez}(Du)]^2-\frac C\eta|DA|^2|V_{ A,\ez}(Du)|^2$$
A direct calculation also yields
\begin{align}\label{diff}&[\mathrm{div}\,(ADu)ADu-(ADu\cdot D )ADu]- [\mathrm{tr}(AD^2u)ADu- AD^2uADu ]\nonumber\\
&\quad=({\rm div} A \cdot Du) ADu - [(ADu\cdot D)A]Du,
\end{align}
where
 ${\rm div} A \cdot Du= a_j^{jk}u_k$.
Since
$$D(| D_Au|^2+\epsilon)^{p-2} =  (p-2) (| D_Au|^2+\epsilon)^{p-4}
[2D^2uADu+  (Du)^T (D A)Du],$$
by the Cauchy-Schwartz inequality and Lemma \ref{lem3.2-a} one has
\begin{align*}
J_2&= (p-2) (| D_Au|^2+\epsilon)^{p-4}
[2D^2uADu+  (Du)^T (D A)Du]\\
&\quad\quad\cdot \left\{({\rm div A}\cdot Du) ADu - [(ADu\cdot D)A]Du\right\}\\
&=(p-2) (| D_Au|^2+\epsilon)^{p-4} \{ 2({\rm div} A \cdot Du)(ADu)^TD^2uADu
- 2D^2uADu \cdot [(ADu\cdot D)A]Du \\
&\quad\quad + ({\rm div} A \cdot Du)  (Du)^T (D A)Du\cdot A Du-
(Du)^T (D A)Du\cdot  [(ADu\cdot D)A]Du
\}\\
&\ge \eta (| D_Au|^2+\epsilon)^{p-2} | D_A^2u| ^2
-\frac C\eta |DA|^2 (| D_Au|^2+\epsilon)^{p-2} |Du|^2\\
&\ge  -\eta [U_{A,\ez}(Du)]^2 -\frac C\eta |DA|^2|V_{ A,\ez}(Du)|^2,
\end{align*}
for any $\eta\in(0,1)$.
Moreover, by  the Cauchy-Schwartz inequality,  one  has
\begin{align*} &{\rm div}[({\rm div} A\cdot Du) ADu - [(ADu\cdot D)A]Du] \\
&=  \langle D^2u ({\rm div }A),   A Du\rangle+ ({\rm div} A \cdot Du)^2+({\rm div }A \cdot Du) {\rm tr} (AD^2u)\\
&\quad\quad -   a^{jk}_iu_k a_j^{il}u_l- a^{jk}u_{ki} a_j^{il}u_l-a^{jk}u_k a_j^{il}u_{li} \\
&\ge  -\eta| D_A^2u| ^2-\frac C\eta|DA|^2|\sqrt A Du|^2.
\end{align*}
Thus $$ J_1\ge   -\eta [U_{A,\ez}(Du)]^2-\frac C\eta|DA|^2|V_{ A,\ez}(Du)|^2$$
as desired.
\end{proof}

\section{Proof of key Lemma \ref{L2}}

In this section we prove  Lemma \ref{L2}.
We always let $1<p<\fz$ and $\ez\in(0,1]$ and
suppose that $\Omega, f$ and $A$  satisfy  assumptions (S1)-(S3).
 Let $u$ be a weak solution to
\eqref{app-ellip-syseps0} with  Dirichlet/Neumann $0$-boundary.

\begin{proof}[Proof of Lemma \ref{L2}]

Recall that \eqref{div-inequal}  gives
$$\frac12\min\{1,(p-1)^2\} [U_{A,\ez}(Du)]^2 \le  f^2 + C  |DA|^2|V_{A,\ez}(Du)|^2
- {\bf I}.$$
Since
\begin{align*}DV_{A,\ez}(Du)&=(p-2)(|D_Au|^2+\ez)^{\frac{p-4}2}[  D^2uADu\otimes ADu+\frac12(Du)^T(DA)Du\otimes ADu  ]\\
&\quad +(|D_Au|^2+\ez)^{\frac{p-2}2} [(DA)Du+AD^2u],
\end{align*}
by the Cauchy-Schwartz inequality one has
$$
|DV_{A,\ez}(Du)|^2\le L[1+|p-2|]|U_{A,\ez}(Du)|^2+ C|DA|^2|V_{A,\ez}(Du)|^2.$$
We then get
$$ [U_{A,\ez}(Du)]^2+ |DV_{A,\ez}(Du)|^2\le Cf^2 + C  |DA|^2|V_{A,\ez}(Du)|^2
-C{\bf I}.
$$
Multiplying both sides by $\phi^2$ for any $\phi\in C_c^\fz(\rn)$,  and integrating over $\Omega$,   we get
\begin{equation}
\begin{split}
&\int_{\Omega}\phi^2|DV_{A,\ez}(Du) |^2\mathrm{d}x + \int_{\Omega}\phi^2 |U_{A,\ez}(Du) |^2\mathrm{d}x \\
&\le  C \int_{\Omega}\phi^2f^2\mathrm{d}x+ C\int_{\Omega}\phi^2  |DA|^2|V_\ez(ADu| ^2\mathrm{d}x +\wz {\bf K}(\phi),
\end{split}
\end{equation}
where if ${\bf I }$ is given by \eqref{I1}, then
\begin{equation}\label{KK1}
\wz {\bf K}(\phi)=-C \int_{\Omega}\phi^2\mathrm{div}\,\big\{(|\sqrt{A}Du|^2+\epsilon)^{p-2}[\mathrm{tr}(AD^2u)ADu - AD^2uADu]\big\} \mathrm{d}x,
\end{equation}
and if ${\bf I }$ is given by \eqref{I2}, then
\begin{equation}\label{KK2}
\wz {\bf K}(\phi)=-
 C \int_{\Omega}\phi^2 \mathrm{div}\,\Big\{(|\sqrt{A}Du|^2+\epsilon)^{p-2}[\mathrm{div}\,(ADu)ADu-(ADu\cdot D )ADu]\Big\} \mathrm{d}x.
\end{equation}

Regards of $\wz {\bf K}(\phi)$ as in \eqref{KK1},
we use  the divergence theorem for \eqref{KK1} to get
 \begin{equation}
\begin{split}
 \wz {\bf K}(\phi)&=2C\int_{\Omega}\phi D\phi (|\sqrt{A}Du|^2+\epsilon)^{p-2}\,[\mathrm{tr}(AD^2u)ADu - AD^2uADu]\dd x\nonumber \\
&\quad-C\int_{\partial\Omega}\phi^2(|\sqrt{A}Du|^2+\epsilon)^{p-2}\Big[ \mathrm{tr}(AD^2u)ADu- AD^2uADu\Big] \cdot\nu\dd\mathcal{H}^{n-1}(x) \label{I1-4}.
\end{split}
 \end{equation}
 Owing to   Young's inequality,  one has
 \begin{align}\label{J-es2}
& 2C \int_{\Omega}\phi D\phi (|\sqrt{A}Du|^2+\epsilon)^{p-2}\,[\mathrm{tr}(AD^2u)ADu - AD^2uADu]\dd x\nonumber\\
 & \le   C \int_{\Omega} |\phi D\phi | (|\sqrt{A}Du|^2+\epsilon)^{p-2} | AD^2u |\, |ADu| \dd x \nonumber \\
   &\le  \frac14 \int_{\Omega}\phi^2(|\sqrt{A}Du|^2+\epsilon)^{p-2}|D^2_Au|^2\dd x + C \int_{\Omega} | D\phi |^2|A Du|^2  (|\sqrt{A}Du|^2+\epsilon)^{p-2}\dd x\nonumber\\
&= \frac14 \int_{\Omega} \phi^2[U_{A,\ez}(Du)]^2\,dx+C \int_{\Omega} | D\phi |^2|V_{A,\ez}(Du)|^2 \,dx.\nonumber
 \end{align}
We therefore obtain \eqref{e3.x3} with ${\bf K}(\phi)$ given by \eqref{K1}.

With regard to $\wz {\bf K}(\phi)$ as in \eqref{KK2}, we use the divergence theorem for \eqref{KK2} to get
 \begin{equation}
\begin{split}
 \wz {\bf K}(\phi)&= 2C\int_{\Omega}\phi D\phi (|\sqrt{A}Du|^2+\epsilon)^{p-2}\,[\mathrm{div}(ADu)ADu-(ADu\cdot D )ADu]\dd x \nonumber\\
&\quad-C\int_{\partial\Omega}\phi^2(|\sqrt{A}Du|^2+\epsilon)^{p-2}\Big[\mathrm{div}(ADu)ADu  -  (ADu\cdot D )ADu \Big]\cdot\nu \dd\mathcal{H}^{n-1}(x).
\end{split}
 \end{equation}
Owing to   Young's inequality,  one has
 \begin{align}\label{J-es}
& 2C \int_{\Omega}\phi D\phi (|\sqrt{A}Du|^2+\epsilon)^{p-2}[\mathrm{div}\,(ADu)ADu-(ADu\cdot D )ADu]\dd x\nonumber\\
 & \le   C \int_{\Omega} |\phi D\phi | (|\sqrt{A}Du|^2+\epsilon)^{p-2} |\nabla(ADu)|\, |ADu| \dd x \nonumber \\
   &\le  \frac14 \int_{\Omega}\phi^2(|\sqrt{A}Du|^2+\epsilon)^{p-2}|A D^2u|^2\dd x \nonumber \\
  & \quad + C \int_{\Omega}\big(|D\phi |^2|A Du|^2 + |\phi|^2 |DA|^2 | Du|^2\big)(|\sqrt{A}Du|^2+\epsilon)^{p-2}\dd x\nonumber\\
&\le  \frac14 \int_{\Omega} \phi^2[U_{A,\ez}(Du)]^2\,dx+C \int_{\Omega} |D \phi|^2|V_{A,\ez}(Du)|^2 \,dx+  C \int_{\Omega} |\phi|^2|DA|^2|V_{A,\ez}(Du)|^2 \,dx.
 \end{align}
We therefore obtain \eqref{e3.x3} with ${\bf K}(\phi)$ given by \eqref{K2}.

\end{proof}

\section{Proof of key Lemma \ref{boundaryterm}}

Given any bounded smooth domain  $ \Omega\subset\rn$, recall that   $\nu$ denotes the  unit outer normal  vector of the boundary $\partial\Omega$.
Below we write   $T(\partial\Omega)$  the tangential space of $\partial\Omega$, and let $ \{\tau_1,\cdots,\tau_{n-1}\}$ be  an normalized orthogonal basis of $ T(\partial \Omega)$ so that
   $\{\tau_1,\cdots,\tau_{n-1},\nu\}$ has the same orientation as $\{e_1,\cdots,e_n\}$.
 We use $\mathrm{div}_{T}$ and $\nabla_{T}$ to denote the divergence and the gradient operator on $\partial\Omega$.
We also recall that $\mathcal{B}$ is the second fundamental form of $\partial\Omega$, which is given by
$$\mathcal{B}(\xi,\eta )=-\frac{\partial \nu}{\partial\xi}\cdot \eta=-\sum_{k=1}^{n-1} (\xi\cdot\tau_k)  \frac{\partial \nu}{\partial\tau_k}\cdot\eta
=-\sum_{i,k=1}^{n-1}(\frac{\partial \nu}{\partial\tau_k}\cdot \tau_i) (\xi\cdot\tau_k)  (\eta\cdot \tau_i),  \quad\forall\xi,\eta\in T(\partial\Omega),$$
where $\frac{\partial }{\partial\xi} $ denote the derivative along the direction $\xi$.
We use $|\mathcal B|$ to denote  the norm of $\mathcal B$, that is,$$ |\mathcal B|=\sup_{|\xi|,|\eta|\le1}\mathcal B(\xi,\eta).$$
The  trace of $\mathcal B$ is given by
$$\mathrm{tr}\mathcal{B}=\sum_{i=1}^{n-1}\mathcal{B}(\tau_i,\tau_i)  =-\sum_{i=1}^{n-1} \frac{\partial \nu}{\partial\tau_i}\cdot \tau_i. $$

To prove Lemma \ref{boundaryterm}, we need a series of lemmas. Firstly, we need the following result to bound the term in ${\bf K}(\phi)$ as in \eqref{K1}. The proof of Lemma \ref{lem:3.3.1a} is postponed to Section 5.1.
\begin{lem}\label{lem:3.3.1a}
 Assume  that $u\in C^2(\overline \Omega) $ and $u=0$ on $\partial\Omega$.  We have
 \begin{align}\label{lem5.1-1}
   -[\mathrm{tr}(AD^2u)ADu- AD^2uADu]\cdot\nu  \le C |\mathcal B| |ADu|^2 \quad\mbox{on $\partial\Omega$}
 \end{align}
with the constant $C $ depending on $L$;
 moreover if $\Omega$ is convex, we have
 \begin{align}\label{lem5.1-2}
   -[\mathrm{tr}(AD^2u)ADu- AD^2uADu]\cdot\nu\le   0 \quad\mbox{on $\partial\Omega$}.
 \end{align}
\end{lem}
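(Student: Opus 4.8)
\textbf{Proof proposal for Lemma \ref{lem:3.3.1a}.}

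The plan is to work at a fixed boundary point $x_0\in\partial\Omega$ in the adapted orthonormal frame $\{\tau_1,\dots,\tau_{n-1},\nu\}$ and exploit the boundary condition $u=0$ on $\partial\Omega$. The crucial consequence of $u\equiv 0$ on $\partial\Omega$ is that $Du$ is normal: $Du=(\partial_\nu u)\,\nu$ on $\partial\Omega$, so all tangential first derivatives vanish. Differentiating the identity $u=0$ twice along tangential directions and using the definition of $\mathcal B$, one gets the standard relation
\begin{equation*}
  D^2u(\tau_i,\tau_j) = (\partial_\nu u)\,\mathcal B(\tau_i,\tau_j)\quad\text{on }\partial\Omega,
\end{equation*}
i.e.\ the tangential-tangential block of the Hessian is controlled by $\mathcal B$ times $\partial_\nu u$, while the mixed $\tau_i\nu$ and the $\nu\nu$ entries of $D^2u$ remain unconstrained. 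The quantity $[\mathrm{tr}(AD^2u)ADu-AD^2uADu]\cdot\nu$ should then be expanded in this frame; the point is that it is a bilinear expression in $D^2u$ and $Du$, and since $Du$ only has a normal component, the ``bad'' unconstrained Hessian entries (those involving $\nu$) must get paired in a way that makes them cancel.

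Concretely, first I would write $ADu$ and $AD^2uADu$ componentwise in the frame. Because $Du=(\partial_\nu u)\nu$, the vector $ADu$ is $(\partial_\nu u)A\nu$, and $AD^2uADu\cdot\nu = (\partial_\nu u)^2\,(A\nu)^T D^2u (A\nu)$, while $\mathrm{tr}(AD^2u)\,ADu\cdot\nu = (\partial_\nu u)\,\mathrm{tr}(AD^2u)\,(A\nu\cdot\nu)$. The term $(A\nu)^T D^2u(A\nu)$ involves the full Hessian contracted against the fixed vector $A\nu$; to handle it one decomposes $A\nu$ into its tangential part $(A\nu)_T=\sum_i (A\nu\cdot\tau_i)\tau_i$ and its normal part $(A\nu\cdot\nu)\nu$. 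Expanding the bilinear form, the purely normal piece produces $(A\nu\cdot\nu)^2 D^2u(\nu,\nu)$ and the cross/tangential pieces produce terms with one or two factors of $(A\nu\cdot\tau_i)$. Similarly one must rewrite $\mathrm{tr}(AD^2u)$ in the frame as $\sum_i (A)(\tau_i,\tau_i)D^2u(\tau_i,\tau_j)\cdots$ — more precisely $\mathrm{tr}(AD^2u)=\sum_{\alpha,\beta} A_{\alpha\beta}D^2u_{\beta\alpha}$ expanded over $\{\tau_i,\nu\}$. The key algebraic fact to verify is that, after substituting $D^2u(\tau_i,\tau_j)=(\partial_\nu u)\mathcal B(\tau_i,\tau_j)$, \emph{all terms containing the unconstrained entries $D^2u(\tau_i,\nu)$ and $D^2u(\nu,\nu)$ cancel}, leaving only a combination of $(\partial_\nu u)^2$ times entries of $\mathcal B$ times entries of $A$, $A\nu$. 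That surviving expression is then bounded in absolute value by $C(L)|\mathcal B|\,|\partial_\nu u|^2 = C(L)|\mathcal B|\,|Du|^2$, and since $|ADu|^2$ and $|Du|^2$ are comparable with constant depending only on $L$, this gives \eqref{lem5.1-1}.

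For \eqref{lem5.1-2}, the convexity of $\Omega$ means $\mathcal B$ is negative semidefinite (with the outer-normal sign convention), or positive semidefinite depending on convention; the point is that the surviving quadratic-in-$\mathcal B$ expression from the previous paragraph should in fact be a contraction of $\mathcal B$ with a positive semidefinite matrix built from $A$, hence of a definite sign. So the remaining step is to identify that surviving expression precisely — I expect it to be something like $-(\partial_\nu u)^2\big[(\mathrm{tr}_A\mathcal B)(A\nu\cdot\nu) - \mathcal B(A\nu_T,A\nu_T)/|A\nu_T|^2\cdots\big]$ — and check its sign using that both $A$ and $-\mathcal B$ are nonnegative. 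The main obstacle I anticipate is precisely this bookkeeping: carefully expanding the two contractions in the non-orthonormal-looking frame (since $A\nu$ is not aligned with $\nu$) and confirming the cancellation of the uncontrolled Hessian entries, then extracting the correct sign-definite structure for the convex case. Once the cancellation is established the estimates are routine applications of $A\in\mathcal E_L$ and Cauchy--Schwarz.
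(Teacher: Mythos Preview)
Your plan is correct and in fact more direct than the paper's route. The paper does not compute in the frame at the Hessian level; instead it invokes Grisvard's boundary identity (Lemma~5.4) for the divergence form $[\mathrm{div}(ADu)ADu-(ADu\cdot D)ADu]\cdot\nu$, then subtracts the difference formula \eqref{diff} to pass to the $\mathrm{tr}(AD^2u)$ form, obtaining the closed identity \eqref{I1-3b}. Your direct expansion bypasses this detour, and the cancellation you anticipate does occur: writing everything in the frame $\{\tau_1,\dots,\tau_{n-1},\nu\}$ with $Du=(\partial_\nu u)\nu$, both the $D^2u(\nu,\nu)$ and the $D^2u(\tau_i,\nu)$ contributions to $\mathrm{tr}(AD^2u)(ADu\cdot\nu)$ and to $(A\nu)^TD^2u(A\nu)$ match exactly and cancel, leaving only the tangential block $D^2u(\tau_i,\tau_j)=-(\partial_\nu u)\,\mathcal B(\tau_i,\tau_j)$ (mind the sign: with the paper's convention $\mathcal B(\xi,\eta)=-\partial_\xi\nu\cdot\eta$ one gets a minus, not a plus).

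Two points to sharpen. First, the surviving expression is not quite what you guessed: there is no $|A\nu_T|^{-2}$. One gets exactly
\[
-[\mathrm{tr}(AD^2u)ADu-AD^2uADu]\cdot\nu
=(\partial_\nu u)^2\,\mathrm{tr}\big(M\,\mathcal B\big),
\qquad M_{ij}:=A_{\nu\nu}(A_{TT})_{ij}-(A_{T\nu})_i(A_{T\nu})_j.
\]
The bound \eqref{lem5.1-1} then follows from $|M|\le C(L)$ and $|\partial_\nu u|\le L|ADu|$. Second, for the convex case the sign is not obtained term-by-term: $M$ is $A_{\nu\nu}$ times the Schur complement of the $(\nu,\nu)$ block in $A$, hence positive definite, so $\mathrm{tr}(M\mathcal B)=\mathrm{tr}(\sqrt{M}\,\mathcal B\,\sqrt{M})\le 0$ when $\mathcal B\le 0$. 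The paper's argument for the sign uses a block-embedding $B=\mathrm{diag}(-\mathcal B,0)$ and $\mathrm{tr}(\sqrt{A}B\sqrt{A})\ge 0$, which is the same positivity mechanism packaged slightly differently. Either way, once you pin down $M$ the rest is routine.
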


We also need the following result to  bound the term in ${\bf K}(\phi)$ as in \eqref{K2}. The proof of Lemma \ref{lem:3.3.1a} is postponed to Section 5.1.

\begin{lem} \label{l3.yy1} Assume that $u\in C^2(\overline\Omega)$ and $ADu\cdot \nu=0$ on $\partial\Omega$.
We have
$$-[\mathrm{div}(ADu)ADu - (ADu\cdot D )ADu]\cdot\nu  \le |\mathcal B| |ADu|^2 \quad\mbox{on $\partial\Omega$};
 $$
  moreover, if $\Omega$ is convex, we have
$$-[\mathrm{div}(ADu)ADu - (ADu\cdot D )ADu] \cdot\nu\le0\quad\mbox{on $\partial\Omega$} .$$
\end{lem}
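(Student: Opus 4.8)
To prove Lemma~\ref{l3.yy1}, the plan is to use the boundary condition $ADu\cdot\nu=0$ to reduce the left-hand side, on $\partial\Omega$, to a pure quadratic form in the tangential vector field $w:=ADu$, and then to identify that form with $\mathcal{B}(w,w)$. Since $\Omega$ is smooth and $A\in C^\infty$ by our standing assumptions, and $u\in C^2(\overline\Omega)$, we have $w\in C^1(\overline\Omega;\rn)$ and $\nu$ smooth on $\partial\Omega$, so every derivative below is legitimate. First I would write
$$[\mathrm{div}(w)w-(w\cdot D)w]\cdot\nu=\mathrm{div}(w)\,(w\cdot\nu)-\big((w\cdot D)w\big)\cdot\nu;$$
on $\partial\Omega$ the first summand vanishes because $w\cdot\nu=0$ there, so the task reduces to showing $\big((w\cdot D)w\big)\cdot\nu=\mathcal{B}(w,w)$ on $\partial\Omega$.

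To obtain this identity, I would use that $w\cdot\nu\equiv0$ on the hypersurface $\partial\Omega$ and that $w(x)$ is tangent to $\partial\Omega$ for each $x\in\partial\Omega$; extending $\nu$ smoothly to a tubular neighbourhood of $\partial\Omega$ (for instance as the gradient of the signed distance function), the directional derivative $(w\cdot D)(w\cdot\nu)$ then vanishes identically on $\partial\Omega$, being a tangential derivative of a function that is constantly zero on $\partial\Omega$. Expanding by the Leibniz rule,
$$0=(w\cdot D)(w\cdot\nu)=\big((w\cdot D)w\big)\cdot\nu+w\cdot\big((w\cdot D)\nu\big)\quad\text{on }\partial\Omega,$$
whence $\big((w\cdot D)w\big)\cdot\nu=-\,w\cdot\big((w\cdot D)\nu\big)$. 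Writing $w=\sum_{l=1}^{n-1}(w\cdot\tau_l)\tau_l$ on $\partial\Omega$ gives $(w\cdot D)\nu=\sum_{l=1}^{n-1}(w\cdot\tau_l)\frac{\partial\nu}{\partial\tau_l}$, so
$$-\,w\cdot\big((w\cdot D)\nu\big)=-\sum_{i,l=1}^{n-1}(w\cdot\tau_l)(w\cdot\tau_i)\Big(\frac{\partial\nu}{\partial\tau_l}\cdot\tau_i\Big)=\mathcal{B}(w,w)$$
by the definition of $\mathcal{B}$ recalled at the start of this section. Combining the displays yields $-[\mathrm{div}(ADu)ADu-(ADu\cdot D)ADu]\cdot\nu=\mathcal{B}(ADu,ADu)\le|\mathcal{B}|\,|ADu|^2$, which is the first assertion; and if $\Omega$ is convex then $\mathcal{B}$ is negative semidefinite with respect to the outer unit normal, so $\mathcal{B}(ADu,ADu)\le0$, which is the second.

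This lemma is essentially a computation once the boundary condition is exploited correctly; the only delicate point — and the one I would watch most carefully — is that $w\cdot\nu=0$ holds merely on $\partial\Omega$ and not in a full neighbourhood, so $w\cdot\nu$ may be differentiated only in directions tangent to $\partial\Omega$. That is precisely what makes $(w\cdot D)(w\cdot\nu)$ vanish on $\partial\Omega$ (using that $w$ is itself tangential there) and makes the Leibniz expansion close up to give exactly $\mathcal{B}(ADu,ADu)$, with no leftover normal-derivative terms; by contrast, the Dirichlet counterpart Lemma~\ref{lem:3.3.1a} instead exploits that $Du$ is parallel to $\nu$ on $\partial\Omega$.
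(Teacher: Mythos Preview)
Your proof is correct and takes essentially the same approach as the paper: both reduce the left-hand side on $\partial\Omega$ to the identity $-[\mathrm{div}(ADu)ADu-(ADu\cdot D)ADu]\cdot\nu=\mathcal{B}((ADu)_T,(ADu)_T)$ using $ADu\cdot\nu=0$, and then read off the two conclusions. The only cosmetic difference is that the paper first quotes the full Grisvard-type boundary identity \eqref{I1-3} and then kills three of its four terms with the Neumann condition, whereas you apply the Neumann condition from the outset and derive the one surviving term directly via the Leibniz rule on $(w\cdot D)(w\cdot\nu)$.
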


The following trace inequality  was built up by Cianchi-Mazya \cite{CMaz} and will be used to prove Lemma \ref{boundaryterm} (ii).

\begin{lem}\label{CMineq} For any $x\in\partial \Omega$ and $0<r<1$, when $n\ge3$, one has
\begin{equation}\label{case1-1}
  \int_{\partial\Omega\cap B_r(x)}v^2|\mathcal{B}|\dd \mathcal{H}^{n-1}(y)\leq C\Psi_{\mathcal{B}}(r)\int_{\Omega\cap B_r(x)}|\nabla v|^2\dd y.
\end{equation}

\end{lem}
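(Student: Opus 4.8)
The final statement is Lemma~\ref{CMineq}, a trace inequality of Cianchi--Mazya type.

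=== PROOF PROPOSAL ===

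\textbf{Setup and strategy.} The plan is to establish the trace inequality \eqref{case1-1} locally near a boundary point $x\in\partial\Omega$ by a covering-plus-change-of-variables argument, reducing to a model half-space situation in which $\partial\Omega$ is a graph of a $W^2L^{n-1,\infty}$ function $\phi$. First I would fix $x\in\partial\Omega$ and $0<r<1$, and use the Lipschitz structure of $\Omega$ to straighten the boundary in a neighborhood $B_r(x)$: after an affine change of coordinates, $\Omega\cap B_r(x)$ corresponds to $\{y_n>\phi(y')\}$ with $\phi:\rr^{n-1}\to\rr$ Lipschitz with constant $\lip_\Omega$, and $\phi$ twice weakly differentiable with $D^2\phi\in L^{n-1,\infty}$ locally. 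On the straightened piece, the surface measure $\dd\mathcal{H}^{n-1}$ on $\partial\Omega$ pulls back to $\sqrt{1+|D\phi|^2}\,\dd y'$, and $|\mathcal{B}|$ is pointwise comparable (with constants depending on $\lip_\Omega$) to $|D^2\phi|/(1+|D\phi|^2)^{3/2}$, hence comparable to $|D^2\phi(y')|$.

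\textbf{The core estimate on the graph.} The task thus reduces to showing
$$\int_{Q} v(y',\phi(y'))^2\,|D^2\phi(y')|\,\dd y'\le C\,\Psi_{\mathcal B}(r)\int_{\{y_n>\phi(y')\}\cap B_r(x)}|\nabla v|^2\,\dd y,$$
for $Q$ the relevant projection. Here I would invoke the one-dimensional fundamental theorem of calculus in the $y_n$-direction: write $v(y',\phi(y'))^2 = -\int_{\phi(y')}^{\infty} \partial_{y_n}\big(\zeta(y_n)\,v(y',y_n)^2\big)\,\dd y_n$ for a suitable cutoff $\zeta$ supported at scale $r$, producing a term with $\partial_{y_n}v\cdot v$ plus a lower-order term with $v^2$; the latter is absorbed using Hardy-type / Poincar\'e control at scale $r$ (this is where the factor comparison and the normalization $r<1$ enter). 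Then one integrates in $y'$ against the weight $|D^2\phi(y')|$ and applies Young's inequality $2|v||\partial_{y_n}v|\le \eta^{-1}v^2 + \eta|\nabla v|^2$. The crucial point is that the resulting coefficient in front of $\int v^2$ must be controlled by $\Psi_{\mathcal B}(r)$; this requires a weighted Hardy inequality in which the weight $|D^2\phi|\in L^{n-1,\infty}$ is paired against $v^2$ via the embedding $\dot W^{1,2}(\rr^n)\hookrightarrow L^{2^\ast}$ with $2^\ast = 2n/(n-2)$ and the H\"older/Lorentz duality $L^{n-1,\infty}\cdot L^{(2^\ast/2)',1}$-type pairing restricted to the $(n-1)$-dimensional slice. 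Tracking the norms shows the constant is exactly of the form $C\,\sup_{x}\|\mathcal B\|_{L^{n-1,\infty}(\partial\Omega\cap B_r(x))} = C\,\Psi_{\mathcal B}(r)$.

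\textbf{Main obstacle.} The hard part will be the sharp Lorentz-space bookkeeping: one needs the trace embedding $W^{1,2}(\Omega)\hookrightarrow L^{2(n-1)/(n-2),2}(\partial\Omega)$ (the sharp Lorentz-space improvement of the usual $L^{2(n-1)/(n-2)}$ trace embedding) so that $v^2|_{\partial\Omega}\in L^{(n-1)/(n-2),1}(\partial\Omega)$, which is precisely the predual scale needed to pair with $|\mathcal B|\in L^{n-1,\infty}(\partial\Omega)$ by H\"older's inequality in Lorentz spaces. Getting the constant to come out as $\Psi_{\mathcal B}(r)$ rather than merely $\|\mathcal B\|_{L^{n-1,\infty}(\partial\Omega)}$ requires doing this on each ball $B_r(x)$ and using that the cutoff localizes both sides to $B_r(x)$; one must be careful that the trace embedding constant on $\Omega\cap B_r(x)$ does not blow up as $r\to0$, which follows from scaling invariance of the homogeneous trace inequality together with the Lipschitz character of $\Omega$. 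I would also note that, since the statement is asserted to be ``built up by Cianchi--Mazya \cite{CMaz}'', in the paper proper it suffices to cite \cite{CMaz} and record the brief reduction to their result rather than reproving it in full; I would present the reduction (straightening, weight comparison, localization) and then invoke their trace inequality verbatim.
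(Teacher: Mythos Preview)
The paper does not prove this lemma at all; it simply states the inequality and attributes it to Cianchi--Maz'ya \cite{CMaz}, with no reduction or argument given. Your proposal therefore goes well beyond what the paper does, and your final remark---that in context a bare citation suffices---matches the paper exactly.

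Your sketched argument (boundary straightening, comparison of $|\mathcal B|$ with $|D^2\phi|$, and then the sharp Lorentz trace embedding $W^{1,2}\hookrightarrow L^{2(n-1)/(n-2),2}(\partial\Omega)$ paired with $|\mathcal B|\in L^{n-1,\infty}$ via H\"older in Lorentz spaces) is indeed the mechanism behind the Cianchi--Maz'ya trace inequality, so the outline is sound. One small caution: the intermediate step with the one-dimensional fundamental theorem of calculus and a cutoff is not quite the clean route; the standard argument passes directly through the Lorentz trace embedding (which already encodes the needed integration in the normal direction) rather than a hand-made Hardy step, and this is what keeps the constant scale-invariant without extra absorption arguments. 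But since the paper only cites the result, none of this needs to appear.
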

Now we are ready to prove Lemma \ref{boundaryterm}.

\begin{proof} [Proof of Lemma \ref{boundaryterm} (i).]  If $\phi$ is supported in $\Omega$, then ${\bf K}(\phi)=0$. Assume that the support of $\phi$ has non empty intersection with $\partial\Omega$.
Since $\Omega$ is convex,  when $u=0$ on $\partial\Omega$, applying Lemma \ref{lem:3.3.1a},  we have
$$-[\mathrm{tr}(AD^2u)ADu - AD^2uADu]\cdot\nu\le 0. $$
Hence, for the ${\bf K}(\phi)$ given by \eqref{K1}, we have
$$ {\bf K}(\phi)=- C
\int_{\partial\Omega}\phi^2(|\sqrt{A}Du|^2+\epsilon)^{p-2} [ \mathrm{tr}(AD^2u)ADu- AD^2uADu ]\cdot\nu\dd\mathcal{H}^{n-1}(x) \le 0.$$

 When $ADu\cdot\nu=0$ on $\partial\Omega$, in view of Lemma \ref{l3.yy1} we know  $$-[{\rm div}(AD u)ADu -  (AD u\cdot D)ADu ]\cdot\nu \le 0.$$
Hence, for the ${\bf K}(\phi)$ given by \eqref{K2}, we have
$$ {\bf K}(\phi)=- C
\int_{\partial\Omega}\phi^2(|\sqrt{A}Du|^2+\epsilon)^{p-2} [ {\rm div}(AD u)ADu -   (AD u\cdot D)ADu  ]\cdot\nu\dd\mathcal{H}^{n-1}(x) \le 0.$$
\end{proof}

\begin{proof}[Proof of Lemma \ref{boundaryterm} (ii).]
When $u=0$ on $\partial\Omega$, taking advantage of Lemma \ref{lem:3.3.1a},  we have
$$-[ \mathrm{tr}(AD^2u)ADu- AD^2uADu]\cdot\nu\le |\mathcal{B} | |ADu|^2.$$
Hence, for the ${\bf K}(\phi)$ given by \eqref{K1}, we have
\begin{align*} {\bf K}(\phi)&=- C
\int_{\partial\Omega}\phi^2(|\sqrt{A}Du|^2+\epsilon)^{p-2}[ \mathrm{tr}(AD^2u)ADu- AD^2uADu ]\cdot\nu\dd\mathcal{H}^{n-1}(x)  \\
&\le C \int_{\partial\Omega}\phi^2 |V_{A,\ez}(Du)|^2|\mathcal{B} | \dd\mathcal{H}^{n-1}(x).
\end{align*}

When $ADu\cdot\nu=0$ on $\partial\Omega$,  applying Lemma \ref{l3.yy1}, one has
$$-[{\rm div}(AD u)ADu-   (AD u\cdot D)ADu ]\cdot\nu\le |\mathcal{B} | |ADu|^2.$$
Hence, for the ${\bf K}(\phi)$ given by \eqref{K2}, we have
\begin{align*} {\bf K}(\phi)&=- C
\int_{\partial\Omega}\phi^2(|\sqrt{A}Du|^2+\epsilon)^{p-2}[{\rm div}(AD u)ADu-   (AD u\cdot D)ADu ]\cdot\nu\dd\mathcal{H}^{n-1}(x)  \\
&\le C \int_{\partial\Omega}\phi^2 |V_{A,\ez}(Du)|^2|\mathcal{B} | \dd\mathcal{H}^{n-1}(x).
\end{align*}

Finally, in light of Lemma \ref{CMineq} and letting $v=\phi V_{\epsilon}(ADu)$, in both cases  we have
 \begin{align*}
  |{\bf K}(\phi) | &\le C \int_{\partial\Omega}\phi^2 |V_{A,\ez}(Du)|^2|\mathcal{B} | \dd\mathcal{H}^{n-1}(x)\\
&\le C \Psi_{\mathcal B}(r)\|\phi   D V_{A,\ez}(Du) \|_{L^2(\Omega)}^2+ C \Psi_{\mathcal B}(r)\||D \phi   | V_{A,\ez}(Du)\|_{L^2(\Omega)}^2.
\end{align*}
 \end{proof}

\subsection {Proofs of Lemma \ref{l3.yy1} and Lemma \ref{lem:3.3.1a}}


To see Lemma \ref{l3.yy1} and Lemma \ref{lem:3.3.1a}, we need the following Lemma; and for its proof we refer to  for example \cite[3,1,1,6]{Grisvard}.  Here we omit the details.

\begin{lem} Assume that $u\in C^2(\overline\Omega)$. Then
\begin{align}\label{I1-3}
  &[\mathrm{div}(ADu)ADu -  (ADu\cdot D )ADu ] \cdot\nu\nonumber\\
  &=\mathrm{div}_{T}(ADu\cdot\nu(ADu)_{T})-(\mathrm{tr}\mathcal{B})(ADu\cdot\nu)^2\nonumber\\
  &\quad -\mathcal{B}((ADu)_{T},(ADu)_{T})-2(ADu)_{T}\cdot\nabla_{T}(ADu\cdot \nu) \quad\quad\textrm{on}\;\partial\Omega.
\end{align}
\end{lem}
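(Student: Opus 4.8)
The statement to prove is the decomposition formula \eqref{I1-3} for
$[\mathrm{div}(ADu)ADu - (ADu\cdot D)ADu]\cdot\nu$ on $\partial\Omega$ in terms of tangential
operators, the second fundamental form, and the normal component of $ADu$. The plan is to treat
$W:=ADu$ as a general $C^1$ vector field on $\overline\Omega$ and compute the scalar quantity
$[(\mathrm{div}\,W)W - (W\cdot D)W]\cdot\nu$ at a boundary point, decomposing everything into
tangential and normal parts via $W = W_T + (W\cdot\nu)\nu$.

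First I would fix a boundary point and choose the normalized orthogonal frame
$\{\tau_1,\dots,\tau_{n-1},\nu\}$ as in the preamble, extended smoothly to a neighborhood so that
derivatives make sense. I would expand $\mathrm{div}\,W = \mathrm{div}_T W_T + \partial_\nu(W\cdot\nu) + (W\cdot\nu)\,\mathrm{div}\,\nu$-type terms, using the standard fact
that $\mathrm{div}\,\nu = -\mathrm{tr}\,\mathcal B$ on $\partial\Omega$ (with our sign convention for
$\mathcal B$), and similarly expand $(W\cdot D)W\cdot\nu$. The key identity to exploit is that for
the $\nu$-component of a derivative of $W$ one can trade a derivative of $W$ against a derivative of
$\nu$: $\partial_\xi W\cdot\nu = \partial_\xi(W\cdot\nu) - W\cdot\partial_\xi\nu$, and
$W\cdot\partial_\xi\nu$ is exactly (minus) the second fundamental form evaluated on the tangential
parts when $\xi$ is tangential. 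Carrying this out, the term $(\mathrm{div}\,W)(W\cdot\nu)$ contributes
$\mathrm{div}_T(W_T)(W\cdot\nu)$ plus $-(\mathrm{tr}\,\mathcal B)(W\cdot\nu)^2$ plus a
$\partial_\nu(W\cdot\nu)(W\cdot\nu)$ term, while $-(W\cdot D)W\cdot\nu$ splits according to whether
the differentiating direction $W$ is replaced by $W_T$ or $(W\cdot\nu)\nu$; the pieces involving
$\partial_\nu$ cancel against the analogous piece from $(\mathrm{div}\,W)(W\cdot\nu)$, leaving
$-\mathcal B(W_T,W_T)$, the cross term $-2W_T\cdot\nabla_T(W\cdot\nu)$, and a total tangential
divergence $\mathrm{div}_T((W\cdot\nu)W_T)$ after combining $\mathrm{div}_T(W_T)(W\cdot\nu)$ with
$W_T\cdot\nabla_T(W\cdot\nu)$. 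This is precisely \eqref{I1-3}.

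The main obstacle is bookkeeping: one must be careful that the ambient divergence and the surface
divergence differ by normal-derivative terms and curvature terms, and that the frame
$\{\tau_i,\nu\}$ is only defined on $\partial\Omega$ (or in a tubular neighborhood), so the
identification of $W\cdot\partial_{\tau_k}\nu$ with $-\mathcal B$ must be done using that
$\partial\nu/\partial\tau_k$ is tangential. A clean way to avoid chasing Christoffel-type terms is
to reduce to the half-space model locally via a smooth boundary chart straightening $\partial\Omega$,
where $\nu = -e_n$ and $\mathcal B$ becomes the Hessian of the defining function restricted to the
tangent hyperplane; then the identity \eqref{I1-3} is a direct computation with partial derivatives,
and invariance under changes of tangential coordinates (both sides are geometrically defined
scalars) transports it back to $\partial\Omega$. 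Since this is a standard computation, I would
simply cite \cite[3.1.1.6]{Grisvard} for the case $W = Du$ and note that the matrix weight $A$ only
changes $Du$ to $ADu$ without affecting the geometric identity, which is exactly what the excerpt
already does.
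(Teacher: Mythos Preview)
Your proposal is correct and matches the paper's own treatment: the paper does not give a proof but simply refers to \cite[3.1.1.6]{Grisvard} and omits the details, exactly as you suggest in your final paragraph. Your additional sketch of the direct computation (treating $W=ADu$ as a generic $C^1$ field and decomposing via $W=W_T+(W\cdot\nu)\nu$, then using $\partial_\xi W\cdot\nu=\partial_\xi(W\cdot\nu)-W\cdot\partial_\xi\nu$ to produce the $\mathcal B$-terms and cancel the normal derivatives) is the standard route behind the cited reference and is sound, so it adds value without diverging from the paper's approach.
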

Above   and below,   we  write $$
\mbox{$ (ADu)_{\nu}=\nu\cdot (ADu)$ and  $(ADu)_{T}=ADu-(ADu)_{\nu}\nu$.}$$
Using identity \eqref{I1-3} we prove Lemma \ref{lem:3.3.1a} and Lemma \ref{l3.yy1} as below.

\begin{proof}[Proof of Lemma \ref{l3.yy1}]
Since $ADu\cdot \nu=0$ on $\partial\Omega$, we  have
$\nabla_{T}(ADu\cdot \nu)=0$ and hence,
 $$\mathrm{div}_{T}(ADu\cdot\nu(ADu)_{T})=0\quad\mbox{on $\partial\Omega$}.$$
 We therefore obtain
$$
  - [\mathrm{div}(ADu)ADu - ((ADu\cdot D )ADu)] \cdot\nu
   =   \mathcal{B}((ADu)_{T},(ADu)_{T}) \quad\mbox{on $\partial\Omega$.}
$$
Thus
$$
 -[ \mathrm{div}(ADu)ADu - ((ADu\cdot D )ADu)] \cdot\nu
  \le |\mathcal{B}| |ADu|^2 \quad\mbox{on $\partial\Omega$.}
$$
If $\Omega$ is convex, we have $-\mathcal{B}\ge 0$, and hence
$$-[\mathrm{div}(ADu)ADu - ((ADu\cdot D )ADu) ]\cdot\nu \leq 0\quad\mbox{on $\partial\Omega$.}$$
\end{proof}

To prove Lemma \ref{lem:3.3.1a} we need the following auxiliary lemma.
\begin{lem}
Under the Dirichlet boundary condition $u=0$ on $\partial \Omega$, we have
\begin{align}\label{I1-3b}
-[\mathrm{tr}(AD^2u)ADu- AD^2uADu]\cdot\nu  =  \Big[A_{\nu,\nu}\mathrm{tr}(\mathcal{B} A_{T,T} )
+\mathcal{B}(A_{T,\nu}  ,A_{T,\nu}  ) \Big]\Big(\frac{\partial u}{\partial\nu}\Big)^2, \quad \mbox{on $\partial\Omega$},
\end{align}
where and below
 we always  set
$$ A_{\nu,\nu}:=\langle A\nu, \nu\rangle,\;\;  A_{T,\nu}:=A\nu-(A_{\nu,\nu})\nu,\;\; A_{T,T}= (\langle A\tau_{i},\tau_j\rangle)_{1\le i,j\le n-1}.$$
\end{lem}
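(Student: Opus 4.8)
The statement to prove is the identity
\begin{equation*}
-[\mathrm{tr}(AD^2u)ADu- AD^2uADu]\cdot\nu  =  \Big[A_{\nu,\nu}\mathrm{tr}(\mathcal{B} A_{T,T} )
+\mathcal{B}(A_{T,\nu}  ,A_{T,\nu}  ) \Big]\Big(\frac{\partial u}{\partial\nu}\Big)^2
\end{equation*}
on $\partial\Omega$, under the Dirichlet condition $u=0$ on $\partial\Omega$.

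My plan: Since $u\equiv 0$ on $\partial\Omega$, the tangential gradient $\nabla_T u$ vanishes, so $Du = \frac{\partial u}{\partial\nu}\,\nu$ on $\partial\Omega$. This immediately gives $ADu = \frac{\partial u}{\partial\nu}\,A\nu$, so $ADu = \frac{\partial u}{\partial\nu}(A_{\nu,\nu}\nu + A_{T,\nu})$, and hence $(ADu)\cdot\nu = \frac{\partial u}{\partial\nu}A_{\nu,\nu}$ and $(ADu)_T = \frac{\partial u}{\partial\nu}A_{T,\nu}$. The left-hand side expands as $-\mathrm{tr}(AD^2u)\,(ADu\cdot\nu) + (AD^2uADu)\cdot\nu$. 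I would then substitute $ADu = \frac{\partial u}{\partial\nu}A\nu$ into both terms, so the whole expression becomes $\frac{\partial u}{\partial\nu}\big[-\mathrm{tr}(AD^2u)(A\nu\cdot\nu) + (AD^2u A\nu)\cdot\nu\big]$, and I am reduced to computing $\mathrm{tr}(AD^2u)$ and $AD^2u\,A\nu\cdot\nu = D^2u(A\nu)\cdot(A\nu)$ in terms of $\frac{\partial u}{\partial\nu}$, using that $u=0$ on $\partial\Omega$.

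The key computational input is the standard boundary identity for second derivatives of a function vanishing on $\partial\Omega$: differentiating $u=0$ along tangential directions twice shows that on $\partial\Omega$ one has $D^2u(\tau_i,\tau_j) = \mathcal{B}(\tau_i,\tau_j)\frac{\partial u}{\partial\nu}$ for $i,j\le n-1$ (this is exactly the content underlying \eqref{I1-3}, and is the Dirichlet analogue of Grisvard's formula). Thus in the orthonormal frame $\{\tau_1,\dots,\tau_{n-1},\nu\}$ the Hessian block structure is: the tangential-tangential block of $D^2u$ equals $\frac{\partial u}{\partial\nu}\,\mathcal{B}$ (identifying $\mathcal B$ with its matrix), while the mixed $(\tau_i,\nu)$ entries and the $(\nu,\nu)$ entry are not controlled — but, crucially, they will cancel. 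Writing $A\nu = A_{\nu,\nu}\nu + A_{T,\nu}$ with $A_{T,\nu}\in T(\partial\Omega)$, I expand $D^2u(A\nu)\cdot(A\nu)$ bilinearly into four pieces: the $\nu$–$\nu$ piece $A_{\nu,\nu}^2 D^2u(\nu,\nu)$, two cross pieces $2A_{\nu,\nu}D^2u(\nu, A_{T,\nu})$, and the tangential piece $D^2u(A_{T,\nu},A_{T,\nu}) = \frac{\partial u}{\partial\nu}\mathcal B(A_{T,\nu},A_{T,\nu})$. Similarly $\mathrm{tr}(AD^2u) = \mathrm{tr}(D^2u\,A)$ expands, using the decomposition $A = A_{\nu,\nu}\nu\otimes\nu + \nu\otimes A_{T,\nu} + A_{T,\nu}\otimes\nu + (\text{tangential block }A_{T,T})$, into $A_{\nu,\nu}D^2u(\nu,\nu) + 2D^2u(\nu,A_{T,\nu}) + \mathrm{tr}(D^2u\big|_{TT}\,A_{T,T})$, and the last term equals $\frac{\partial u}{\partial\nu}\mathrm{tr}(\mathcal B\,A_{T,T})$.

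Finally I would assemble: the difference $-\mathrm{tr}(AD^2u)A_{\nu,\nu} + D^2u(A\nu,A\nu)$, after multiplying by $\frac{\partial u}{\partial\nu}$, has all terms involving the uncontrolled quantities $D^2u(\nu,\nu)$ and $D^2u(\nu,A_{T,\nu})$ appearing with matching coefficients, so they cancel exactly — the $A_{\nu,\nu}^2 D^2u(\nu,\nu)$ from the second term cancels the $A_{\nu,\nu}\cdot A_{\nu,\nu}D^2u(\nu,\nu)$ from the first, and likewise the cross terms $2A_{\nu,\nu}D^2u(\nu,A_{T,\nu})$ cancel. What survives is precisely $\big(\frac{\partial u}{\partial\nu}\big)^2\big[A_{\nu,\nu}\mathrm{tr}(\mathcal B A_{T,T}) + \mathcal B(A_{T,\nu},A_{T,\nu})\big]$, which is the claim. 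The main obstacle is organizing the frame-dependent bookkeeping cleanly — choosing the orthonormal boundary frame, writing both $A$ and $D^2u$ in block form relative to $T(\partial\Omega)\oplus\mathbb R\nu$, and carefully tracking which Hessian entries survive versus cancel; the symmetry of $A$ and of $D^2u$, plus the single algebraic fact $D^2u\big|_{TT} = \frac{\partial u}{\partial\nu}\mathcal B$ on $\partial\Omega$, do all the real work.
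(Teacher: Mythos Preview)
Your approach is correct and is genuinely different from---and more direct than---the paper's. The paper never touches $D^2u$ pointwise on $\partial\Omega$. Instead it starts from the divergence-form quantity $[\mathrm{div}(ADu)ADu-(ADu\cdot D)ADu]\cdot\nu$, applies the Grisvard boundary identity \eqref{I1-3} to it, and then uses the algebraic relation \eqref{diff} between the $\mathrm{tr}$-form and the $\mathrm{div}$-form to pass to the desired expression. That detour produces tangential derivatives of $A$ (from $\mathrm{div}_T A_{T,\nu}$, $\nabla_T A_{\nu,\nu}$, and the correction term $[(\mathrm{div}A)\cdot Du]ADu-[(ADu\cdot D)A]Du$), which must be computed separately as \eqref{I1-3a} and \eqref{I-3c} and then shown to cancel. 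Your route works directly with $\mathrm{tr}(AD^2u)$ and $D^2u(A\nu,A\nu)$ in the boundary frame; since neither involves differentiating $A$, no $DA$-terms ever appear, and the cancellation of the uncontrolled entries $D^2u(\nu,\nu)$ and $D^2u(\nu,A_{T,\nu})$ is a one-line observation. What the paper's route buys is that it reuses \eqref{I1-3}, which is needed anyway for the Neumann case; what your route buys is a transparent explanation of why no boundary information on $DA$ enters the Dirichlet estimate.

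One caution on signs. With the paper's convention $\mathcal{B}(\xi,\eta)=-\frac{\partial\nu}{\partial\xi}\cdot\eta$ (so convex domains have $\mathcal{B}\le 0$), differentiating $u=0$ twice tangentially gives $D^2u(\tau_i,\tau_j)=-\frac{\partial u}{\partial\nu}\,\mathcal{B}(\tau_i,\tau_j)$, not $+\frac{\partial u}{\partial\nu}\,\mathcal{B}(\tau_i,\tau_j)$ as you wrote; and in your final assembly, the surviving trace term carries the minus from $-A_{\nu,\nu}\,\mathrm{tr}(AD^2u)$, which you dropped. You should redo the bookkeeping carefully with these two corrections---the strategy is sound, but the signs of the two surviving terms need to be tracked consistently.
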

\begin{proof}
 Recall that \eqref{diff} gives
\begin{align}\label{equa-ADu-1}
&[\mathrm{tr}(AD^2u)ADu- AD^2uADu]\cdot\nu \nonumber\\
&= [\mathrm{div}\,(ADu)ADu-(ADu\cdot D )ADu]\cdot\nu - \{[({\rm div} A) \cdot Du] ADu - [(ADu\cdot D)A]Du\}\cdot\nu.
\end{align}
It then suffices to prove that
\begin{align}\label{I1-3a}
  &[\mathrm{div}(ADu)ADu-  (ADu\cdot D )ADu] \cdot\nu\nonumber\\
  &= -\left[A_{\nu,\nu}\mathrm{tr}(\mathcal{B} A_{T,T} )
+\mathcal{B}(A_{T,\nu}  ,A_{T,\nu} ) \right]\Big(\frac{\partial u}{\partial\nu}\Big)^2
\nonumber\\
  &\quad +
 \sum^{n-1}_{k=1}\Big\{A_{\nu,\nu}\Big(\frac{\partial A}{\partial\tau_k}\nu\cdot\tau_k\Big)-\Big(\frac{\partial A}{\partial\tau_k}\nu\cdot\nu\Big)\tau_k\cdot A_{T,\nu}\Big\} \Big(\frac{\partial u}{\partial\nu}\Big)^2,
\end{align}
and
\begin{align}\label{I-3c}
&\{[(\mathrm{div} A)\cdot Du] ADu - [(ADu\cdot D)A]Du\}\cdot\nu\nonumber\\
&
  =\sum^{n-1}_{k=1}\Big\{A_{\nu,\nu}\Big(\frac{\partial A}{\partial\tau_k}\nu\cdot\tau_k\Big) -
  (A_{T,\nu}\cdot \tau_k)\Big(\frac{\partial A}{\partial\tau_k}\nu\cdot\nu\Big)\Big\} \Big(\frac{\partial u}{\partial\nu}\Big)^2 .
\end{align}

We show   \eqref{I1-3a} by  considering  all terms in its right side  in order.
Note  that $u=0$  on $\partial \Omega$, which implies that
 $\nabla_{T} u=0$, we have  $Du|_{\partial\Omega}=  \frac{\partial u}{\partial\nu}  \,\nu$,
\begin{align}\label{eq:ADu-nu}
(ADu)_\nu=\langle ADu,\nu\rangle =\langle A\nu,Du\rangle=A_{\nu,T}\cdot \nabla_{T}u+A_{\nu,\nu}\frac{\partial u}{\partial \nu}=A_{\nu,\nu} \frac{\partial u}{\partial\nu}
\end{align}
and
$$(ADu)_{T}=ADu-(ADu)_{\nu}\nu=A_{T,T}\cdot\nabla_{T}u+A_{T,\nu}\frac{\partial u}{\partial \nu} = A_{T,\nu}\frac{\partial u}{\partial \nu} . $$
Thus,
\begin{align*}
\mathrm{div}_{T}(ADu\cdot\nu(ADu)_{T}) &=\mathrm{div}_T
  \Big(A_{\nu,\nu} \Big(\frac{\partial u}{\partial\nu}\Big)  ^2A_{T,\nu} \Big)\\
&=A_{\nu,\nu} \Big(\frac{\partial u}{\partial\nu}\Big)  ^2  \mathrm{div}_T A_{T,\nu} +
  A_{T,\nu} \cdot   \nabla_T \Big( A_{\nu,\nu} \Big(\frac{\partial u}{\partial\nu}\Big)  ^2\Big),
\end{align*}
\begin{align*}
-(\mathrm{tr}\mathcal{B})(ADu\cdot\nu)^2
&= -A_{\nu,\nu} ^2 \Big(\frac{\partial u}{\partial\nu}\Big)  ^2\mathrm{tr} \mathcal{B} , \quad
  -\mathcal{B}((ADu)_{T},(ADu)_{T}) =  - \Big(\frac{\partial u}{\partial\nu}\Big)^2 \mathcal{B}  (A_{T,\nu}  ,A_{T,\nu}   ) ,
\end{align*}
and moreover,
\begin{align*}
-2(ADu)_{T}\cdot\nabla_{T}(ADu\cdot \nu)&=
   -2
\Big( \frac{\partial u}{\partial\nu}  A_{T,\nu}\Big)\cdot\nabla_{T}
  \Big(A_{\nu,\nu} \frac{\partial u}{\partial\nu} \Big) \\
&=-2
  A_{T,\nu}\cdot\nabla_{T}
  \Big(A_{\nu,\nu} \Big(\frac{\partial u}{\partial\nu}\Big)^2 \Big) +2  A_{T,\nu}\cdot
A_{\nu,\nu} \Big(\frac{\partial u}{\partial\nu}\Big) \nabla_{T}
  \Big(  \frac{\partial u}{\partial\nu} \Big) \\
&=-2
 A_{T,\nu}\cdot\nabla_{T}
  \Big(A_{\nu,\nu} \Big(\frac{\partial u}{\partial\nu}\Big)^2 \Big) +   A_{T,\nu}\cdot
A_{\nu,\nu}  \nabla_{T}
  \Big(  \Big(\frac{\partial u}{\partial\nu}\Big) ^2 \Big) \\
&= -  A_{T,\nu}\cdot\nabla_{T}
  \Big(A_{\nu,\nu} \Big(\frac{\partial u}{\partial\nu}\Big)^2 \Big)-    \Big(\frac{\partial u}{\partial\nu}\Big) ^2  A_{T,\nu} \cdot
\nabla_{T}A_{\nu,\nu} .
 \end{align*}
Combining them together, we obtain
\begin{align} \label{e3.xx1}
  &\mathrm{div}(ADu)ADu\cdot\nu- ((ADu\cdot D )ADu) \cdot\nu\nonumber\\
  &=    \Big(\frac{\partial u}{\partial\nu}\Big)^2[ -A_{\nu,\nu} ^2  \mathrm{tr}(\mathcal{B}) -\mathcal{B}  (A_{T,\nu}  ,A_{T,\nu}   )  + A_{\nu,\nu}\mathrm{div}_TA_{T,\nu}  -  A_{T,\nu}\cdot \nabla_{T}A_{\nu,\nu}].
\end{align}

 Note  that  by $A_{T,\nu}=A\nu-A_{\nu,\nu}\cdot\nu$, we  have
\begin{align}\label{div}
 A_{\nu,\nu}  \mathrm{div}_T(A_{T,\nu})&=A_{\nu,\nu}\sum^{n-1}_{k=1}\Big(\frac{\partial}{\partial\tau_{k}}A_{T,\nu}\Big)\cdot\tau_k\nonumber\\
 &=A_{\nu,\nu}\sum^{n-1}_{k=1}\Big(\frac{\partial}{\partial\tau_{k}}\left\{A\nu-A_{\nu,\nu}\cdot\nu\right\}\Big)\cdot\tau_k\nonumber\\
  &=A_{\nu,\nu}\Big[\sum^{n-1}_{k=1}\Big(A\frac{\partial\nu}{\partial\tau_k}\cdot\tau_k+\frac{\partial A}{\partial \tau_k}\nu\cdot\tau_k-A_{\nu,\nu}\frac{\partial\nu}{\partial\tau_k}\cdot\tau_k\Big)\Big]\nonumber\\
  &=A_{\nu,\nu}\Big[A_{\nu,\nu}\mathrm{tr}\mathcal{B}+\sum^{n-1}_{i,k=1}\frac{\partial\nu}{\partial\tau_k}(A_{T,T})_{i,k}\cdot\tau_i+\sum^{n-1}_{k=1}\frac{\partial A}{\partial\tau_k}\nu\cdot\tau_k\Big]\nonumber\\
  &=A_{\nu,\nu}^2\mathrm{tr}\mathcal{B}-A_{\nu,\nu}\mathrm{tr}(\mathcal{B}A_{T,T})+A_{\nu,\nu}\sum^{n-1}_{k=1}\frac{\partial A}{\partial\tau_k}\nu\cdot\tau_k.
\end{align}
On the other hand,
By $A_{\nu,\nu}=\nu\cdot A\nu$, we also have
\begin{align}
  \frac{\partial}{\partial\tau_k}A_{\nu,\nu}&=2A\frac{\partial\nu}{\partial\tau_k}\cdot\nu+\frac{\partial A}{\partial\tau_k}\nu\cdot\nu=2 \frac{\partial\nu}{\partial\tau_k}\cdot A\nu+\frac{\partial A}{\partial\tau_k}\nu\cdot\nu.\nonumber \end{align}
Since $ \frac{\partial\nu}{\partial\tau_k}\cdot\nu=0$ and  $A_{T,\nu}=A\nu-A_{\nu,\nu}\cdot\nu$ we have

\begin{align}
\frac{\partial}{\partial\tau_k}A_{\nu,\nu}&=2 \frac{\partial\nu}{\partial\tau_k}\cdot  A_{ T, \nu} + \frac{\partial A}{\partial\tau_k}\nu\cdot\nu, 
\end{align}
which directly yields
\begin{align}\label{nubla}
 -A_{T,\nu}\cdot \nabla_{T}  A_{\nu,\nu}=-\sum_{k=1}^{n-1} (A_{T,\nu}\cdot\tau_k)  \frac{\partial   A_{\nu,\nu}}{\partial\tau_k}=  2\mathcal{B}( A_{T,\nu },A_{T,\nu})-\sum^{n-1}_{k=1}\left(\frac{\partial A}{\partial\tau_k}\nu\cdot\nu\right)   (A_{T,\nu}\cdot   \tau_k ).
\end{align}

Plugging the above identity and  \eqref{div}  into \eqref{e3.xx1}, we get the desired identity \eqref{I1-3a}.

Finally, by $D_Tu=0$ on $ \partial\Omega$ we have
  $$(\mathrm{div} A)\cdot Du= (\mathrm{div} A)\cdot \nu \frac{\partial u}{\partial \nu}=   \Big[\Big\langle\frac{\partial  A}{\partial \nu} \nu,\nu\Big\rangle  +
 \sum_{k=1}^{n-1} \Big\langle\frac{\partial A}{\partial \tau_k}  \nu,\tau_k\Big\rangle \Big]\frac{\partial u}{\partial \nu}  \quad\mbox{on $\partial\Omega$}.$$
By $ADu \cdot \nu= A_{\nu,\nu} \frac{\partial u}{\partial \nu}$ on $\partial\Omega$, we obtain
\begin{align} \label{xyyy-1}
    [(\mathrm{div} A)\cdot Du] ADu \cdot \nu
   & = \Big[\Big\langle\frac{\partial  A}{\partial \nu} \nu,\nu\Big\rangle  +
 \sum_{k=1}^{n-1} \Big\langle\frac{\partial A}{\partial \tau_k}  \nu,\tau_k\Big\rangle \Big]
   A_{\nu,\nu} \Big(\frac{\partial u}{\partial \nu}\Big)^2 \quad\mbox{on $\partial\Omega$}.
\end{align}
Moreover, by $D_Tu=0$ on $ \partial\Omega$, we have $$(ADu\cdot D)A =  A_{\nu,\nu}\frac{\partial u}{\partial \nu}
  \frac{\partial A }{\partial \nu}
  + \sum_{k=1}^{n-1} \frac{\partial u}{\partial \nu} (A_{T,\nu}\cdot \tau_k)\frac{\partial A}{\partial \tau_k}  \quad\mbox{on $\partial\Omega$},$$

 $$[(ADu\cdot D)A]Du = A_{\nu,\nu}\Big(\frac{\partial u}{\partial \nu}\Big)^2 \frac{\partial A}{\partial\nu}\nu + \sum_{k=1}^{n-1} \Big(\frac{\partial u}{\partial \nu}\Big)^2 (A_{T,\nu}\cdot \tau_k)\frac{\partial A}{\partial \tau_k} \nu,$$
 and
\begin{align*}
[(ADu\cdot D)A]Du \cdot \nu
  & = A_{\nu,\nu}\Big(\frac{\partial u}{\partial \nu}\Big)^2    \Big\langle\frac{\partial  A}{\partial \nu} \nu,\nu\Big\rangle  +
\sum_{k=1}^{n-1} \Big(\frac{\partial u}{\partial \nu}\Big)^2 (A_{T,\nu}\cdot \tau_k)\Big\langle \frac{\partial A}{\partial \tau_k} \nu,\nu\Big\rangle .
\end{align*}
From the above identity and \eqref{xyyy-1} we conclude \eqref{I-3c}.

 \end{proof}

Now we are ready to prove Lemma \ref{lem:3.3.1a}.

\begin{proof}[Proof of Lemma \ref{lem:3.3.1a}]
Recall that
\begin{align}\label{I1-3b}
-[\mathrm{tr}(AD^2u)ADu- AD^2uADu]\cdot\nu  =  \left[A_{\nu,\nu}\mathrm{tr}(\mathcal{B} A_{T,T} )
+\mathcal{B}(A_{T,\nu}  ,A_{T,\nu}  ) \right]\Big(\frac{\partial u}{\partial\nu}\Big)^2,
\end{align}
We obviously have
$$-
[\mathrm{tr}(AD^2u)ADu- AD^2uADu]\cdot\nu \le C|\mathcal B||ADu|^2$$
for some constant $C$ depending on $L$, that is, \eqref{lem5.1-2} holds.

Moreover,
if $\Omega$ is convex,   we know  that
  $  -\mathcal B\ge0$, and hence  $\mathcal{B} (A_{T,\nu}  ,A_{T,\nu}  )\le0$.
Since  $$  B_{n-1} :=  \left(-\mathcal B(\tau_i,\tau_j) \right)_{1\le i,j\le n-1}
\ge 0,$$
writing $B:={\rm diag}\,\{B_{n-1},0\} $, we also have $B\ge 0$.
 Since $A\ge 0$,  we have  $A_{\nu,\nu}\ge 0$ and $\sqrt AB\sqrt A\ge0$, and hence $\mathrm{tr}(\sqrt AB\sqrt A)\ge0$.
Note that $$\mathrm{tr}(B_{n-1}A_{T,T})=\mathrm{tr}(BA)=\mathrm{tr}(\sqrt AB\sqrt A) $$
and that
$A_{\nu,\nu}\mathrm{tr}(\mathcal{B} A_{T,T} )=A_{\nu,\nu}\mathrm{tr}(B_{n-1}A_{T,T})$,
we conclude that $A_{\nu,\nu}\mathrm{tr}(\mathcal{B} A_{T,T} )\le0$.
Thus by \eqref{I1-3b} we have
  $$-
[\mathrm{tr}(AD^2u)ADu- AD^2uADu]\cdot\nu \le 0,$$
that is, \eqref{lem5.1-1} holds.


\end{proof}

\section{Proofs of key Lemmas \ref{lem:Veps-es1} \& \ref{cmineq2}} 

Recall   the following Garliardo-Nirenberg-Sobolev  inequality

\begin{lem}\label{G-N-S} For any $1\le s<\frac{n+2}{n-2}$ and $\theta=\frac{2n}{n+2}\frac{s-1}s$, or $s=\frac{n+2}{n-2}$ with $n\ge3$ and $\theta=1$, we have
$$\|v-v_\Omega\|^2_{L^{s}(\Omega)}\le C\|Dv\|_{L^{2}(\Omega)}^{2\theta}\|v\|_{L^1(\Omega)}^{2(1-\theta)}\quad\forall v\in W^{1,2}(\Omega),$$
where $v_\Omega=\frac{1}{|\Omega|}\int_\Omega v\dd x$, and $C>0$ is a constant depends on $s,n$
and  $\lip_\Omega$.
\end{lem}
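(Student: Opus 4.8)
The plan is to obtain this from the Sobolev--Poincar\'e inequality by an elementary interpolation between Lebesgue exponents, the one delicate point being to keep the constant scale invariant so that it depends only on $n$, $s$ and $\lip_\Omega$. First I would reduce to the normalized case $\diam\Omega=1$: under $\Omega\mapsto\lambda\Omega$, $v\mapsto v(\cdot/\lambda)$ the Lipschitz constant $\lip_\Omega$ is unchanged, and both sides of the asserted inequality get multiplied by the same power of $\lambda$ precisely because $\tz=\frac{2n}{n+2}\frac{s-1}s$; hence it suffices to treat domains with $\diam\Omega=1$.

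Suppose first $n\ge3$ and set $2^\ast=\frac{2n}{n-2}$. The Sobolev--Poincar\'e inequality on a bounded Lipschitz domain yields
\begin{equation*}
\|v-v_\Omega\|_{L^{2^\ast}(\Omega)}\le C\,\|Dv\|_{L^{2}(\Omega)},
\end{equation*}
where, after the normalization, $C$ depends only on $n$ and $\lip_\Omega$. For $1\le s\le2^\ast$, writing $\frac1s=\frac{1-\tz}1+\frac{\tz}{2^\ast}$ and solving for $\tz$ gives exactly $\tz=\frac{2n}{n+2}\frac{s-1}s\in[0,1]$; the log-convexity (H\"older) inequality for Lebesgue norms then gives
\begin{equation*}
\|v-v_\Omega\|_{L^{s}(\Omega)}\le\|v-v_\Omega\|_{L^{1}(\Omega)}^{1-\tz}\,\|v-v_\Omega\|_{L^{2^\ast}(\Omega)}^{\tz},
\end{equation*}
and combining this with the preceding display and the trivial bound $\|v-v_\Omega\|_{L^1(\Omega)}\le2\|v\|_{L^1(\Omega)}$, then squaring, produces the claim for $1\le s<\frac{n+2}{n-2}$ (in fact for all $1\le s\le2^\ast$). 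For the endpoint $s=\frac{n+2}{n-2}$ with $\tz=1$, since $\frac{n+2}{n-2}<2^\ast$ one has by H\"older on the bounded set $\Omega$ and the Sobolev--Poincar\'e inequality $\|v-v_\Omega\|_{L^{(n+2)/(n-2)}(\Omega)}\le C\|v-v_\Omega\|_{L^{2^\ast}(\Omega)}\le C\|Dv\|_{L^2(\Omega)}$, and squaring settles this case.

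For $n=2$ the exponent $\frac{n+2}{n-2}$ is $+\infty$, there is no $L^{2^\ast}$ to interpolate against, and the scheme above would only return the power $\|Dv\|_{L^2}^{2\tz}$; so instead I would invoke the Gagliardo--Nirenberg inequality on $\rr^2$,
\begin{equation*}
\|g\|_{L^{s}(\rr^2)}\le C\,\|Dg\|_{L^{2}(\rr^2)}^{1-1/s}\,\|g\|_{L^{1}(\rr^2)}^{1/s},\qquad 1\le s<\infty,
\end{equation*}
applied to $g=E(v-v_\Omega)$, where $E$ is a Stein--type extension operator, bounded both $W^{1,2}(\Omega)\to W^{1,2}(\rr^2)$ and $L^1(\Omega)\to L^1(\rr^2)$; on the normalized domain its norms, and the Poincar\'e constant in $\|v-v_\Omega\|_{L^2(\Omega)}\le C\|Dv\|_{L^2(\Omega)}$, depend only on $\lip_\Omega$. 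Then $\|DE(v-v_\Omega)\|_{L^2(\rr^2)}\le C(\|Dv\|_{L^2(\Omega)}+\|v-v_\Omega\|_{L^2(\Omega)})\le C\|Dv\|_{L^2(\Omega)}$ and $\|E(v-v_\Omega)\|_{L^1(\rr^2)}\le C\|v\|_{L^1(\Omega)}$, and since $\|v-v_\Omega\|_{L^s(\Omega)}\le\|g\|_{L^s(\rr^2)}$ this yields the claim with $\tz=1-1/s=\frac{2n}{n+2}\frac{s-1}s|_{n=2}$ after squaring.

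The index bookkeeping and the concluding squarings are routine; the only point requiring real care is that all auxiliary constants (Sobolev--Poincar\'e, Poincar\'e, extension) be used in their form for a domain of unit diameter, so that after the scaling reduction they depend on $\Omega$ only through $\lip_\Omega$ together with the dimensional data implicit in the definition of a Lipschitz domain. This is the mild obstacle I expect, since for a general bounded Lipschitz domain these constants a priori also see the number and sizes of the boundary charts.
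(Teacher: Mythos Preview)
The paper does not prove this lemma: it is introduced with ``Recall the following Gagliardo--Nirenberg--Sobolev inequality'' and used thereafter as a standard black box. Your argument---scaling to unit diameter, Sobolev--Poincar\'e followed by log-convex interpolation between $L^1$ and $L^{2^\ast}$ for $n\ge3$, and an extension-based Gagliardo--Nirenberg argument for $n=2$---is a correct and standard route to this inequality.

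One remark on the endpoint. Your scaling reduction checks scale invariance precisely when $\theta=\tfrac{2n}{n+2}\tfrac{s-1}{s}$, but the stated endpoint $s=\tfrac{n+2}{n-2}$, $\theta=1$ is \emph{not} scale invariant: the formula gives $\theta=\tfrac{8n}{(n+2)^2}<1$ at that $s$, so after un-normalizing, the constant in your H\"older-then-Sobolev argument will also pick up a dependence on $\diam\Omega$. This is almost certainly a typo in the statement for $s=\tfrac{2n}{n-2}=2^\ast$, at which the formula does yield $\theta=1$ and the claim is exactly the Sobolev--Poincar\'e inequality you already invoke; indeed the paper's later applications use the $s=2^\ast$ embedding directly. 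Your closing caveat about the constant's dependence on the full Lipschitz character (number and radii of boundary charts, not merely the single number $\lip_\Omega$) is also well taken; the paper is informal on this point.
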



%
%
%
%
%
%

\begin{proof}[proof of Lemma \ref{lem:Veps-es1}]
(i) Let $s=\frac{2q}{q-2}$, that is, $q=\frac{2s}{s-2}$. Note that $q>n\ge2$ implies that $2<s=\frac{2q}{q-2}<\frac{2n}{n-2}$.
By the H\"older inequality, one has
 \begin{align*}
  \int_{\Omega}  |DA|^2|V_{A,\ez}(Du)|^2  \dd x
   & \le
  \Big( \int_{\Omega\cap B(x,r)} |DA|^{\frac {2s}{s-2} }\Big)^{ \frac {s-2}s} \Big(\int_{\Omega}|  V_{A,\ez}(Du)|^s  \dd x\Big)^{2/s}\\
&=  \|DA\|^2_{ L^{q }(\Omega \cap B(x,r))} \|  V_{A,\ez}(Du)\|_{L^s(\Omega)}^{2}.
  \end{align*}
Write  $ \theta=\frac{2n}{n+2}\frac{s-1}s $ and note $0<\theta<1$.
By the Garliardo-Nirenberg-Sobolev   inequality  we have
$$\|V_{A,\ez}( A Du)-[V_{A,\ez}( A Du)]_\Omega\|^2_{L^{s}(\Omega)}\le C\|DV_{A,\ez}( A Du)\|_{L^{2}(\Omega)}^{2\theta}\|V_{A,\ez}( A Du)\|_{L^1(\Omega)}^{2(1-\theta)}.$$
Applying  Young's inequality, for any $\eta\in(0,1)$ we have
$$\|V_{A,\ez}( A Du)-[V_{A,\ez}( A Du)]_\Omega\|^2_{L^{s}(\Omega)}\le  \eta \|DV_{A,\ez}( A Du)\|^2_{L^{2}(\Omega)}+ \frac C\eta\|V_{A,\ez}( A Du)\|^2_{L^1(\Omega)}.$$
Thus
  \begin{align*}
    \| V_{A,\ez}(Du)\|_{L^s(\Omega)}^{2}
&\le \eta\|D   V_{A,\ez}(Du) \|^2_{L^2(\Omega)}+ \frac C\eta \|  V_{A,\ez}(Du) \|_{L^1(\Omega)}^2.
  \end{align*}

(ii) For any $\phi\in C^\fz_c(B(z,r))$, by the H\"older inequality, we have
 \begin{align*}
  \int_{\Omega} |DA|^2|V_{A,\ez}(Du)|^2 \phi^2\dd x
   & \le   \|DA\|^2_{ L^{n }(\Omega \cap B(z,r))} \|\phi  V_{A,\ez}(Du)\|_{L^{\frac{2n}{n-2}}(\Omega)}^{2}.
  \end{align*}
Recall the Sobolev imbedding gives
 \begin{align*}
\|\phi  V_{A,\ez}(Du)-[\phi  V_{A,\ez}(Du)]_\Omega\| _{L^{\frac{2n}{n-2}}(\Omega)}\le C\|D[\phi  V_{A,\ez}(Du)]\|_{L^2(\Omega)},\end{align*}
we have
 \begin{align*}
\|\phi  V_{A,\ez}(Du) \|^2 _{L^{\frac{2n}{n-2}}(\Omega)}&\le C\|D[\phi  V_{A,\ez}(Du)]\|^2_{L^2(\Omega)}+ \|\phi  V_{A,\ez}(Du)\|^2_{L^1(\Omega)}\\
&\le C\|\phi D  V_{A,\ez}(Du) \|^2_{L^2(\Omega)}+  C\| |D\phi| | V_{A,\ez}(Du)| \|^2_{L^2(\Omega)} +  \|\phi  V_{A,\ez}(Du)\|^2_{L^1(\Omega)} \end{align*}.
\end{proof}

Below we prove Lemma \ref{cmineq2}.

\begin{proof}[Proof of Lemma \ref{cmineq2}]
Applying Lemma \ref{G-N-S} for $v=V_{A,\epsilon}(Du)$ with $s=2$ and $ \theta=\frac{n}{n+1}$, in light of Young's inequality we obtain
\begin{align*}
\|  V_{A,\ez}(Du)-[   V_{A,\ez}(Du)]_\Omega\| _{L^2 (\Omega)}\le \eta\|D V_{A,\ez}(Du)\|_{L^2(\Omega)}+\frac  C\eta \|V_{A,\ez}(Du)\|_{L^1(\Omega)}\end{align*}
for any $\eta\in(0,1)$. This then gives the desired inequality.
\end{proof}

\section{Proofs  of Theorems \ref{thm:convex} and \ref{thm}}


To prove Theorems \ref{thm:convex} and \ref{thm} we need the following auxiliary lemmas.

The first is the following extension lemma,
whose proof is given in appendix, where we adapt some arguments of Sobolev extension operator by Jones and also \cite{KZZ}.
\begin{lem}\label{Aconve} Suppose that $\Omega$ is a bounded Lipschitz domain.
Let  $A\in\mathcal E_L(\Omega)$ satisfy \eqref{sobA}.
 There exists a family    $\{A^\ez\}_{\ez\in(0,1]}\subset\mathcal E_L(\rn)\cap C^\fz(\rn)$ such that

(i) if $DA\in L^q(\Omega)$ with $q> n\ge 2$,  we have
 $A^\ez\to A \in W^{1,q}(\Omega)$ and
$\|D   A^\ez\|_{L^q(\rn)}\le C \|DA\|_{L^q(\Omega)} $ for some constant $C$ depending $n,q$ and $\Omega$.

(ii) if  $DA\in L^n(\Omega)$ with $ n\ge 2$, we have  $A^\ez\to A \in W^{1,n}(\Omega)$,  $\|D  A^\ez \|_{L^n(\rn)}<C\|DA\|_{L^n(\Omega)} $ and
 $$\Phi_{  A^\ez,\Omega^t}(r):=\sup_{x\in\overline \Omega_t}\|DA^\ez\|_{L^n(B(x,r)\cap\Omega^t)}\le C \Phi_{  A}( C r),\quad \forall 0<t,\ez< r<\diam\Omega $$
for some constant $C$ depending on  $n$ and $\Omega$.   Here and below $$\Omega^t:=\{x\in\rn,\dist(x,\Omega)<t\}.$$
\end{lem}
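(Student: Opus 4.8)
The plan is to build $A^\ez$ in three steps --- a Sobolev extension of $A$ to $\rn$, a pointwise correction restoring symmetry and uniform ellipticity, and a mollification --- and to deduce the quantitative bound on $\Phi_{A^\ez,\Omega^t}$ in (ii) from a \emph{local} form of the extension estimate. First I would use that a bounded Lipschitz domain is an $(\ez_0,\dz_0)$-domain, so Jones' extension operator $E$ applies componentwise to $A$. To obtain a bound involving only $\|DA\|_{L^q(\Omega)}$, set $\bar A:=\frac1{|\Omega|}\int_\Omega A\,\dd x$ and $\wz A:=E(A-\bar A)+\bar A$; by the boundedness of $E$ and the Poincar\'e inequality on $\Omega$ one gets $\wz A=A$ on $\Omega$, $\wz A\in\dot W^{1,q}(\rn)$, and
\begin{equation*}
\|D\wz A\|_{L^q(\rn)}=\|DE(A-\bar A)\|_{L^q(\rn)}\le C\|A-\bar A\|_{W^{1,q}(\Omega)}\le C\|DA\|_{L^q(\Omega)},
\end{equation*}
with $C=C(n,q,\Omega)$.

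Since $\wz A$ need not be valued in $\mathcal E_L$ outside $\Omega$, let $\mathcal K$ be the closed convex set of symmetric $n\times n$ matrices $M$ (with the Frobenius inner product) satisfying $\frac1L|\xi|^2\le\langle M\xi,\xi\rangle\le L|\xi|^2$ for all $\xi\in\rn$, and let $\Pi$ be the composition of the symmetrization $M\mapsto\frac12(M+M^{T})$ with the nearest-point projection onto $\mathcal K$; then $\Pi$ is $1$-Lipschitz, maps into $\mathcal K$, and fixes every symmetric matrix of $\mathcal K$, so $\Pi\circ A=A$ on $\Omega$. Put $\widehat A:=\Pi\circ\wz A$: by the chain rule for Lipschitz maps composed with Sobolev functions, $\widehat A\in L^\infty(\rn)$, $|D\widehat A|\le|D\wz A|$ a.e., $\widehat A=A$ on $\Omega$, and $\widehat A\in\mathcal E_L(\rn)$. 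Finally set $A^\ez:=\widehat A*\rho_\ez$ for a standard mollifier $\rho_\ez$ at scale $\ez$; convolution preserves symmetry, and $A^\ez(x)$, a convex average of matrices in $\mathcal K$, lies in $\mathcal K$, so $A^\ez\in\mathcal E_L(\rn)\cap C^\fz(\rn)$. Moreover $DA^\ez=(D\widehat A)*\rho_\ez$ gives $\|DA^\ez\|_{L^q(\rn)}\le\|D\widehat A\|_{L^q(\rn)}\le C\|DA\|_{L^q(\Omega)}$, and $A^\ez\to\widehat A$ in $L^q_{\loc}(\rn)$ with $DA^\ez\to D\widehat A$ in $L^q(\rn)$, hence $A^\ez\to A$ in $W^{1,q}(\Omega)$. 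This already gives (i) (the case $q>n$) and every structural assertion of (ii).

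For (ii) it remains to estimate $\Phi_{A^\ez,\Omega^t}$, and the only additional ingredient is the local extension bound: there is $C=C(n,\Omega)$ with
\begin{equation*}
\|D\widehat A\|_{L^n(B(y,s))}\le\|D\wz A\|_{L^n(B(y,s))}\le C\|DA\|_{L^n(B(y,Cs)\cap\Omega)}\le C\,\Phi_{A,\Omega}(Cs)\qquad(y\in\overline\Omega,\ s>0).
\end{equation*}
Granting this, fix $0<t,\ez<r<\diam\Omega$ and $x\in\overline{\Omega^t}$, pick $x'\in\overline\Omega$ with $|x-x'|\le t<r$, and note $B(x,r)+B(0,\ez)\subset B(x,2r)\subset B(x',3r)$; then, since $DA^\ez=(D\widehat A)*\rho_\ez$,
\begin{equation*}
\|DA^\ez\|_{L^n(B(x,r)\cap\Omega^t)}\le\|D\widehat A\|_{L^n(B(x',3r))}\le C\|DA\|_{L^n(B(x',Cr)\cap\Omega)}\le C\,\Phi_{A,\Omega}(Cr),
\end{equation*}
and taking the supremum over $x\in\overline{\Omega^t}$ gives $\Phi_{A^\ez,\Omega^t}(r)\le C\,\Phi_{A,\Omega}(Cr)$, as claimed.

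The main obstacle is the displayed local bound on $D\wz A=DE(A-\bar A)$, which requires revisiting Jones' construction. On a Whitney cube $Q$ of $\rn\setminus\overline\Omega$ near $\partial\Omega$, $E(A-\bar A)$ is a partition-of-unity gluing of the constant approximations $\frac1{|Q^*|}\int_{Q^*}(A-\bar A)$ on the reflected cubes $Q^*\subset\Omega$, with $\ell(Q)\approx\dist(Q,\partial\Omega)\approx\dist(Q,Q^*)$; hence on $Q$ the derivative $|DE(A-\bar A)|$ is dominated by $\ell(Q)^{-1}$ times differences of these averages over reflected cubes of neighbouring Whitney cubes. Chaining such differences through a bounded-length string of cubes lying in $\Omega$ (available because $\Omega$ is a uniform domain) and applying Poincar\'e on each cube converts them into $L^n$-averages of $D(A-\bar A)=DA$ over a subregion of $B(y,Cs)\cap\Omega$, while the bounded overlap of the reflected cubes for a Lipschitz domain permits summing the $L^n$-contributions over all relevant $Q$; the part of $B$ inside $\Omega$ contributes only $\|DA\|_{L^n(B\cap\Omega)}\le\Phi_{A,\Omega}(s)$. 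This is exactly the self-improving local extension estimate established in \cite{KZZ}, whose argument I would follow, carrying out the remaining routine details (including the treatment of the cut-off region of Jones' extension when $Cs$ exceeds $\diam\Omega$) in the appendix.
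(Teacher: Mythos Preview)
Your proposal is correct and follows the same overall architecture as the paper --- Jones extension, then mollification, with the local estimate on $\Phi$ reduced to the local Sobolev extension bound of \cite{KZZ} --- but there is one genuine simplification you miss. The paper does not need your projection step $\Pi$ onto $\mathcal K$, nor the preliminary subtraction of $\bar A$. The reason is that the specific Jones-type operator $\Lambda$ used there assigns to $x\in(\overline\Omega)^\complement$ the value $\sum_{Q}(\bint_{Q^\ast}A\,dz)\vz_Q(x)$, a convex combination (via the partition of unity $\{\vz_Q\}$) of averages of $A$ over cubes in $\Omega$; hence $\langle\wz A(x)\xi,\xi\rangle=\sum_Q\bigl(\bint_{Q^\ast}\langle A\xi,\xi\rangle\,dz\bigr)\vz_Q(x)\in[L^{-1}|\xi|^2,L|\xi|^2]$ automatically, and symmetry is preserved for the same reason. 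Likewise, by setting $Q^\ast=\Omega$ for the far Whitney cubes, $\Lambda$ is already a homogeneous $\dot W^{1,q}$ extension whose gradient bound involves only $\|DA\|_{L^q(\Omega)}$, so the Poincar\'e correction you build in by hand is absorbed into the construction. Your route via $E(A-\bar A)$ followed by the $1$-Lipschitz projection $\Pi$ is perfectly valid (and has the virtue of working with any black-box Sobolev extension), but it introduces an extra layer that the paper avoids by exploiting the convex-average structure of the extension; once you see this, both the ellipticity of $A^\ez$ and the local bound follow in one line each.
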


Next we recall the following two approximation results of domains; see for example \cite{CMaz}.
\begin{lem}\label{appconvex}
Given any bounded convex domain $\Omega$ in $\rn$,
there is a  sequence $\{\Omega_k\}$ of smooth  bounded convex domains in $\rn$ such that
\begin{equation}\label{e3.xx2}\mbox{
$\Omega\Subset\Omega_{k}$, $\lim_{k\rightarrow\infty}|\Omega_{k}\setminus\Omega|=0$
and $\lim_{k\rightarrow\infty} d_H(\Omega_k,\Omega)=0$. }
\end{equation}
\end{lem}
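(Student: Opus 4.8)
The plan is to realize each $\Omega_k$ as a strict sublevel set of a mollified Minkowski gauge of $\Omega$, so that convexity is automatic and smoothness of the boundary comes from the non-degeneracy of the gradient of a convex function. First I would translate so that $0$ lies in the interior of $\Omega$, fix $0<r\le R$ with $\overline{B(0,r)}\subseteq\Omega\subseteq B(0,R)$, and introduce the gauge $f(x):=\inf\{t>0:\ x/t\in\Omega\}$. The elementary facts I would record, each with a one‑line proof, are: $f$ is finite and convex, positively $1$‑homogeneous, satisfies $|x|/R\le f(x)\le |x|/r$, is globally Lipschitz with constant $L_0:=1/r$ (subadditivity of a $1$‑homogeneous convex function together with the upper bound), and $\{f<t\}=t\Omega$, $\{f\le 1\}=\overline\Omega$ for every $t>0$; in particular $f(0)=0$.

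Next I would mollify. Let $\rho_\varepsilon$ be a standard mollifier supported in $B(0,\varepsilon)$ and set $f_\varepsilon:=f*\rho_\varepsilon\in C^\infty(\rn)$. Then $f_\varepsilon$ is convex (mollification preserves convexity), coercive since $f_\varepsilon\ge f-L_0\varepsilon\ge |x|/R-L_0\varepsilon$, and $|f_\varepsilon-f|\le L_0\varepsilon$ pointwise (Lipschitz bound on $f$). I would then define
\[
\Omega_k:=\bigl\{x\in\rn:\ f_{1/k}(x)<1+2L_0/k\bigr\}.
\]
Each $\Omega_k$ is open, nonempty, bounded (coercivity of $f_{1/k}$), and convex, being a sublevel set of the convex function $f_{1/k}$. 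To see that $\Omega_k$ is a \emph{smooth} domain I would verify that $\nabla f_{1/k}$ does not vanish on $\partial\Omega_k=\{f_{1/k}=1+2L_0/k\}$: for a $C^1$ convex function on $\rn$ the zero set of the gradient coincides with the set of global minimizers, while $f_{1/k}(0)\le f(0)+L_0/k=L_0/k<1+2L_0/k$, so the level value $1+2L_0/k$ lies strictly above $\min f_{1/k}$ and no boundary point is a minimizer. Hence $\partial\Omega_k$ is a $C^\infty$ hypersurface by the implicit function theorem, $\Omega_k$ sits on one side of it, and $\Omega_k$ is a smooth bounded convex domain.

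It then remains to check \eqref{e3.xx2}, which I would read off from the sandwich
\[
\Omega\subseteq\overline\Omega\subseteq\Omega_k\subseteq(1+3L_0/k)\,\Omega .
\]
Here $\overline\Omega\subseteq\Omega_k$ holds because $f_{1/k}\le f+L_0/k\le 1+L_0/k<1+2L_0/k$ on $\overline\Omega=\{f\le 1\}$, and since $\overline\Omega$ is compact this yields $\Omega\Subset\Omega_k$; and $\Omega_k\subseteq(1+3L_0/k)\Omega$ holds because $x\in\Omega_k$ forces $f(x)\le f_{1/k}(x)+L_0/k<1+3L_0/k$, i.e.\ $x\in(1+3L_0/k)\Omega$ by $1$‑homogeneity. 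From the sandwich, $|\Omega_k\setminus\Omega|\le\bigl((1+3L_0/k)^n-1\bigr)|\Omega|\to 0$; and since $(1+3L_0/k)\Omega\subseteq B(0,2R)$ for large $k$, every $x\in\Omega_k$ satisfies $\dist(x,\Omega)\le|x-x/(1+3L_0/k)|\le 6RL_0/k$, so $d_H(\Omega_k,\Omega)\le 6RL_0/k\to 0$. Letting $k\to\infty$ finishes the argument.

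The argument is essentially routine bookkeeping; the only step requiring a little care is the non‑degeneracy of $\nabla f_{1/k}$ on the boundary level set, which is what guarantees that $\Omega_k$ genuinely has smooth boundary. I would handle it via the minimizer characterization of critical points of convex functions rather than through any curvature estimate, and I emphasize that no strict convexity (or uniform curvature) of $\Omega$ is needed anywhere.
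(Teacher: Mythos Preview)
Your argument is correct. The paper does not actually prove this lemma; it merely records it as a known approximation fact and refers to \cite{CMaz}, so there is no ``paper's own proof'' to compare against. Your mollified Minkowski--gauge construction is a clean, self-contained way to manufacture the approximating domains: convexity of the sublevel sets is free, the nondegeneracy of $\nabla f_{1/k}$ on the boundary level set (via the minimizer characterization of critical points of convex $C^1$ functions) is exactly the right mechanism for smoothness, and the sandwich $\overline\Omega\subseteq\Omega_k\subseteq(1+3L_0/k)\Omega$ immediately delivers $\Omega\Subset\Omega_k$, the measure convergence, and the Hausdorff convergence. Nothing is missing.
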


\begin{lem}\label{appdomain}
Given any    Lipschitz domain $\Omega$  satisfying \eqref{exx1}, 
there is a  sequence $\{\Omega_k\}$ of smooth   domains in $\rn$ such that
\begin{equation}\label{e3.xx2}\mbox{
$\Omega\Subset\Omega_{k}$, $\lim_{k\rightarrow\infty}|\Omega_{k}\setminus\Omega|=0$
and $\lim_{k\rightarrow\infty} d_H(\Omega_k,\Omega)=0$, $\Phi_{\Omega_k}(r)\le C\Phi_{\Omega} (Cr)$,}
\end{equation}
where $C$ is a constant depending only in $n$ and $\Omega$.
\end{lem}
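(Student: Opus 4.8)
The statement to prove is Lemma~\ref{appdomain}, the approximation of a Lipschitz domain satisfying \eqref{exx1} by smooth domains with controlled curvature functional $\Phi_\Omega$.

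The plan is to reduce to a local construction via a finite partition of unity on $\partial\Omega$, mollify the defining Lipschitz graphs, and then patch the pieces together into a global smooth domain that contains $\Omega$. First I would fix, for each boundary point, a neighborhood $B(x_j,r_j)\cap\partial\Omega$ in which $\Omega$ agrees with the subgraph $\Gamma(\phi_j)$ of a Lipschitz function $\phi_j\colon\rr^{n-1}\to\rr$ that is twice weakly differentiable with $D^2\phi_j$ in the relevant weak Lebesgue/Zygmund space. By compactness, finitely many such neighborhoods cover $\partial\Omega$; choose a subordinate smooth partition of unity. The local approximating domains are obtained by replacing $\phi_j$ with $\phi_j^{(k)}:=\phi_j*\rho_{1/k}-c_k$, where $\rho_{1/k}$ is a standard mollifier and $c_k>0$ is a small vertical shift chosen so that the new subgraph strictly contains the old one near that chart (this guarantees $\Omega\Subset\Omega_k$). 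Since $\phi_j$ is Lipschitz, $\phi_j^{(k)}\to\phi_j$ uniformly and in $W^{1,q}$ for every $q<\infty$, $\|D\phi_j^{(k)}\|_\infty\le\|D\phi_j\|_\infty=\lip_\Omega$ (up to a fixed constant), and $\phi_j^{(k)}\in C^\infty$; standard mollification estimates on weak-$L^{n-1}$ (or $L^{1,\infty}\log L$) norms give $\|D^2\phi_j^{(k)}\|_{L^{n-1,\infty}(B_r)}\le C\|D^2\phi_j\|_{L^{n-1,\infty}(B_{Cr})}$ with $C$ depending only on $n$, which is exactly the curvature bound translated to graphs.

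Next I would assemble the global domain. The sets $\{\Gamma(\phi_j^{(k)})\}_j$ agree with each other and with $\Omega$ well away from the overlaps, so a careful gluing — e.g. defining $\Omega_k$ by a smoothed maximum of the local signed distance functions to the $\Gamma(\phi_j^{(k)})$, weighted by the partition of unity — produces a single smooth bounded domain with $\Omega\Subset\Omega_k$, $|\Omega_k\setminus\Omega|\to0$ and $d_H(\Omega_k,\Omega)\to0$ from the uniform convergence $\phi_j^{(k)}\to\phi_j$. The second fundamental form $\mathcal B_k$ of $\partial\Omega_k$ is, in each chart, a smooth function of $D\phi_j^{(k)}$ and $D^2\phi_j^{(k)}$ that is bounded by $C(\lip_\Omega)|D^2\phi_j^{(k)}|$, so translating the graph estimate back to the boundary via the bi-Lipschitz chart maps yields $\Phi_{\Omega_k}(r)=\sup_{x\in\partial\Omega_k}\|\mathcal B_k\|_{*}(\partial\Omega_k\cap B_r(x))\le C\Phi_\Omega(Cr)$ with $C=C(n,\Omega)$, where $\|\cdot\|_*$ is the weak-$L^{n-1}$ or weak Zygmund norm as appropriate to the dimension. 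Finally I would record that \eqref{exx1} ensures $\Phi_\Omega(r)<\infty$ for all $r$, so the right-hand side is finite and the limit $\limsup_{r\to0}\Phi_{\Omega_k}(r)$ is controlled by $\limsup_{r\to0}\Phi_\Omega(r)$ uniformly in $k$; this is what makes the smallness hypothesis \eqref{smallness} persist under the approximation and is the whole point of the lemma.

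The main obstacle is the gluing step together with the curvature bookkeeping: one must ensure that the patching of finitely many mollified graphs does not create spurious high-curvature regions in the overlaps, and that the weak-type norms behave subadditively over the covering with a multiplicity constant depending only on $n$. The cleanest way around this is to exploit that the $\phi_j$ already coincide on overlaps (they are all restrictions of the same set $\Omega$), so the mollifications differ there only by $O(1/k)$ in $C^1$ and by a mollification-of-the-same-function amount in the second derivatives; hence on overlaps $D^2\phi_j^{(k)}$ and $D^2\phi_\ell^{(k)}$ agree up to terms controlled by the same weak norm, and the smoothed-maximum construction inherits the bound with a constant depending only on $n$ and the (finite, $n$-dependent) overlap multiplicity. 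Since this is essentially the construction already carried out by Cianchi--Mazya \cite{CMaz} (and Lemma~\ref{appconvex} is the classical convex case), I would cite \cite{CMaz} for the detailed verification and only indicate the modifications needed to track $\Phi_\Omega$ rather than redo every estimate.
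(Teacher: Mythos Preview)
Your proposal is correct and aligns with the paper's own treatment: the paper does not give a proof of Lemma~\ref{appdomain} at all but simply cites \cite{CMaz} for this approximation result, and you end up doing the same after sketching the underlying mollify-and-glue construction. Your added outline of the construction (local graph mollification with a vertical shift, weak-type stability of $D^2\phi$ under convolution, and gluing via a partition of unity) is accurate and more informative than what the paper provides, but since you also defer the technical verification to \cite{CMaz}, the two approaches are effectively identical.
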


Moreover the following two approximation solutions to certain equations are needed.
Recall that notions  $V_{ A,\ez }(Du)$ and  $V_{  \ez }(Du)$  in Section 2,  we write $V_{A}(Du)=V_{A,0}(Du)$ and $V(Du)= V_{0}(Du)$.

\begin{lem}\label{appconv}
Let $\Omega_\fz=\Omega$ and $ \{\Omega_k\}_{k\in\nn}$ be as Lemma \ref{appconvex}, or be as in Lemma \ref{appdomain}.
Let $g\in C^\fz_c(\Omega)$, $A\in C^\fz(\rn)\cap \mathcal E_L(\rn)$ and $\ez\in(0,1]$.
  For $k\in\nn\cup\{\fz\}$, denote by $v_k\in W^{1,p} (\Omega_k)$ the weak solution to $\mathcal L_{A,\ez,p}v_k=g$ in $\Omega_k$ with Dirichlet $0$-boundary
or Neumann $0$-boundary.
Then  
$V_{A,\ez}(Dv_k)\to V_{A,\ez}(D v_\fz)$ and $V_{ \ez}(Dv_k)\to V_{ \ez}(D v_\fz)$  in $\Omega_\fz$ as $\nn\ni k\to\fz$.

\end{lem}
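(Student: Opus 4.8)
\textbf{Proof proposal for Lemma \ref{appconv}.}
The plan is to combine a uniform a priori estimate with weak compactness and a uniqueness argument. First I would extend each $v_k$ (defined on $\Omega_k$) by its boundary data so that it makes sense to compare them on $\Omega_\fz=\Omega\Subset\Omega_k$; for the Dirichlet case the natural object is $v_k$ restricted to $\Omega$, while for the Neumann case one normalizes by subtracting the mean so that $v_k$ is unique. Since $\Omega_k\to\Omega$ in the Hausdorff sense and $|\Omega_k\setminus\Omega|\to0$, and since $g\in C_c^\fz(\Omega)$ has support eventually contained in every $\Omega_k$, the equation $\mathcal L_{A,\ez,p}v_k=g$ holds on a fixed neighborhood of $\supp g$ for all large $k$.

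Next I would establish a uniform bound. Using $v_k$ itself (or $v_k$ minus its mean in the Neumann case) as a test function in the weak formulation and the ellipticity \eqref{ell} together with the coercivity coming from the $(|\sqrt A Dv_k|^2+\ez)^{(p-2)/2}$ weight, one gets $\|Dv_k\|_{L^p(\Omega_k)}\le C(g,\ez,L,p,n,d_\Omega)$ uniformly in $k$, hence also a uniform bound for $\|V_{A,\ez}(Dv_k)\|_{L^{p'}(\Omega_k)}$ and $\|V_\ez(Dv_k)\|_{L^{p'}(\Omega_k)}$. (For $p<2$ the weight is bounded so this is immediate; for $p>2$ one uses that $(|\sqrt A Dv_k|^2+\ez)^{(p-2)/2}|\sqrt A Dv_k|^2\gtrsim |Dv_k|^p-C$.) By the Poincaré inequality (Dirichlet case) or Poincaré--Wirtinger inequality (Neumann case, after mean subtraction) we also control $\|v_k\|_{W^{1,p}(\Omega)}$ uniformly. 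Therefore, up to a subsequence, $v_k\rightharpoonup v_\ast$ weakly in $W^{1,p}(\Omega)$, $Dv_k\to Dv_\ast$ a.e.\ (via compactness in $L^p$), and one can pass to the limit in the weak formulation on $\Omega$: the monotonicity of the operator $\xi\mapsto(|\sqrt A\xi|^2+\ez)^{(p-2)/2}A\xi$ lets one identify the weak limit of $V_{A,\ez}(Dv_k)$ with $V_{A,\ez}(Dv_\ast)$ by a Minty-type argument, so $v_\ast$ is a weak solution of $\mathcal L_{A,\ez,p}v_\ast=g$ in $\Omega$ with the correct boundary condition. By uniqueness of the weak solution (strict monotonicity of the regularized operator for $\ez>0$; uniqueness up to constants in the Neumann case, killed by the mean normalization) we get $v_\ast=v_\fz$, and since the limit is independent of the subsequence, the full sequence converges.

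Finally, to upgrade weak convergence to the claimed convergence of $V_{A,\ez}(Dv_k)$ and $V_\ez(Dv_k)$ I would use the strong monotonicity estimate: testing the difference of the equations for $v_k$ and $v_\fz$ on $\Omega$ with $v_k-v_\fz$ (Dirichlet) — being careful that $v_k$ is not zero on $\partial\Omega$, so this needs the boundary-layer smallness $|\Omega_k\setminus\Omega|\to0$ and the uniform bound to show the boundary contribution vanishes — yields $\int_\Omega\langle V_{A,\ez}(Dv_k)-V_{A,\ez}(Dv_\fz),Dv_k-Dv_\fz\rangle\,\dd x\to0$, which by the standard vector inequality $\langle V_{A,\ez}(\xi)-V_{A,\ez}(\eta),\xi-\eta\rangle\gtrsim (|\xi|^2+|\eta|^2+\ez)^{(p-2)/2}|\xi-\eta|^2$ forces $Dv_k\to Dv_\fz$ strongly in $L^p_{\loc}(\Omega)$, and then continuity of $\xi\mapsto V_{A,\ez}(\xi)$ and $\xi\mapsto V_\ez(\xi)$ gives the conclusion. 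I expect the main obstacle to be the bookkeeping at the boundary: since $v_k$ solves the equation on the larger domain $\Omega_k$, it does not satisfy the boundary condition on $\partial\Omega$ exactly, so the test-function manipulations on $\Omega$ produce boundary integrals over $\partial\Omega$ that must be shown to be negligible using only $d_H(\Omega_k,\Omega)\to0$, $|\Omega_k\setminus\Omega|\to0$, and the uniform $W^{1,p}$ bound — this is where the approximation hypotheses \eqref{e3.xx2} are really used, and the Neumann case requires additional care with the mean-value normalization.
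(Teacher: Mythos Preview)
Your approach is genuinely different from the paper's. The paper exploits that $\ez>0$ and $A,g$ are smooth, so the regularized equation is uniformly elliptic with smooth data: it invokes interior $C^{1,\alpha}$ estimates to get $Dv_k\to Dv$ locally uniformly in $\Omega_\fz$ (hence $V_{A,\ez}(Dv_k)\to V_{A,\ez}(Dv)$ pointwise for free), and uniform global $C^{0,\bz}$ estimates (from \cite{LU1,LadU68,Tr}) to show $v_k|_{\partial\Omega}\to0$ uniformly, then cites \cite{SZ} to conclude $v\in W^{1,p}_0(\Omega)$; uniqueness finishes. Your route via energy bounds, weak compactness and monotonicity avoids regularity theory entirely and is in principle more robust, but you then have to do by hand what the $C^{1,\alpha}$ estimate gives instantly.

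Two concrete points. First, the step ``$Dv_k\to Dv_\ast$ a.e.\ (via compactness in $L^p$)'' is not right: Rellich--Kondrachov gives $v_k\to v_\ast$ strongly in $L^p$, not a.e.\ convergence of gradients. This is harmless for the Minty argument, which only needs weak convergence, but you should not list it as an input. Second, the boundary bookkeeping you flag is indeed the crux, and it can be resolved cleanly within your framework without appealing to $C^{0,\bz}$: extend $v_\ast$ (once you know $v_\ast\in W^{1,p}_0(\Omega)$, which follows by extending each $v_k$ by zero to $\Omega_1$ and noting the weak limit vanishes on $\Omega_1\setminus\overline\Omega$) by zero to $\Omega_k$ and test the equation for $v_k$ with $v_k-v_\ast\in W^{1,p}_0(\Omega_k)$. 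The resulting identity contains, besides the monotonicity integral over $\Omega$, the term $\int_{\Omega_k\setminus\Omega}\langle V_{A,\ez}(Dv_k),Dv_k\rangle\,\dd x$, which is \emph{nonnegative}; combined with $\int_\Omega\langle V_{A,\ez}(Dv_\ast),D(v_k-v_\ast)\rangle\to0$ (weak convergence) and $\int_\Omega g(v_k-v_\ast)\to0$, this forces $\int_\Omega\langle V_{A,\ez}(Dv_k)-V_{A,\ez}(Dv_\ast),D(v_k-v_\ast)\rangle\to0$ without ever testing on $\Omega$ with a function that fails to vanish on $\partial\Omega$. The Neumann case needs an extension of test functions from $\Omega$ to $\Omega_k$ and control of the strip $\Omega_k\setminus\Omega$ via $|\Omega_k\setminus\Omega|\to0$ and the uniform $L^{p'}$ bound on $V_{A,\ez}(Dv_k)$; the paper does essentially this, approximating $\Omega$ from inside by $U_m$ and using H\"older on $\Omega_k\setminus U_m$.
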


\begin{lem}\label{approx} Let $\Omega \subset\rn$ be a bounded Lipschitz domain, $g\in C^\fz_c(\Omega )$ and
$\{A^\ez\}_{\ez\in[0,1]}\in\mathcal E_L(\Omega )$  with $A^\ez\to A^0$ in as $\ez\to0$.
 For $\ez\in[0,1]$, denote by $v_\ez\in W^{1,p}(\Omega)$ be a weak solution to $\mathcal L_{A^\ez,\ez,p} v_\ez =g $ in $\Omega $
with Dirichlet $0$-boundary or 
with  Neumann $0$-boundary. 
Then
$V_{A,\ez}(Dv_\ez )\to V_{A^0 }(D v_0)$ and $V_{ \ez}(Dv_\ez)\to V(D v_0)$ almost everywhere in $\Omega$ as $0<\ez \to0$.

\end{lem}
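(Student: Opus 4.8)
\textbf{Proof plan for Lemma \ref{approx}.}
The plan is to combine energy estimates uniform in $\ez$ with the $L^1$-estimate cited in Corollary \ref{L1-esti} and then extract a.e.\ convergent subsequences, finally identifying the limit by passing to the limit in the weak formulation. First I would test the weak formulation of $\mathcal L_{A^\ez,\ez,p}v_\ez=g$ against $v_\ez$ itself (admissible in both the Dirichlet and, up to an additive constant, Neumann settings), giving
\begin{equation*}
\int_\Omega (|\sqrt{A^\ez}Dv_\ez|^2+\ez)^{\frac{p-2}2}\langle A^\ez Dv_\ez,Dv_\ez\rangle\,\dd x=\int_\Omega g\,v_\ez\,\dd x .
\end{equation*}
Using the ellipticity \eqref{ell} (uniform in $\ez$ since $A^\ez\in\mathcal E_L(\Omega)$), Poincar\'e's inequality and Young's inequality, this yields a bound on $\|Dv_\ez\|_{L^p(\Omega)}$ (when $p\ge 2$) or, when $1<p<2$, a bound on $\int_\Omega(|Dv_\ez|^2+\ez)^{(p-2)/2}|Dv_\ez|^2\,\dd x$ together with the $L^1$-bound $\|V_\ez(Dv_\ez)\|_{L^1(\Omega)}\le C\|g\|_{L^1(\Omega)}$ from Corollary \ref{L1-esti}; in all cases one gets $\{v_\ez\}$ bounded in $W^{1,r}(\Omega)$ for some $r>1$ depending on $p$. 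Hence, along a subsequence, $v_\ez\rightharpoonup v_0$ weakly in $W^{1,r}(\Omega)$ and $v_\ez\to v_0$ strongly in $L^r(\Omega)$.

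The main obstacle is upgrading this weak gradient convergence to \emph{a.e.} convergence of $Dv_\ez$, which is what is needed to conclude a.e.\ convergence of the nonlinear quantities $V_{A,\ez}(Dv_\ez)$ and $V_\ez(Dv_\ez)$. Here I would use the standard monotonicity/compactness argument for $p$-Laplacian-type operators: test the equations for $v_\ez$ and for $v_0$ with $v_\ez-v_0$ (or with $(v_\ez-v_0)\zeta$ for suitable cut-offs $\zeta$ near $\partial\Omega$ to handle the boundary), subtract, and exploit that $A^\ez\to A^0$ strongly together with $g$ fixed to show
\begin{equation*}
\int_\Omega\big\langle (|\sqrt{A^\ez}Dv_\ez|^2+\ez)^{\frac{p-2}2}A^\ez Dv_\ez-(|\sqrt{A^0}Dv_0|^2)^{\frac{p-2}2}A^0 Dv_0,\ Dv_\ez-Dv_0\big\rangle\,\dd x\to 0 .
\end{equation*}
The strict monotonicity of the map $\xi\mapsto(|\sqrt{A}\xi|^2)^{(p-2)/2}A\xi$ (for fixed positive-definite $A$) then forces $Dv_\ez\to Dv_0$ in measure, hence a.e.\ along a further subsequence; one must be slightly careful with the $\ez$-regularization, using that $|(|\sqrt{A^\ez}\xi|^2+\ez)^{(p-2)/2}A^\ez\xi-(|\sqrt{A^\ez}\xi|^2)^{(p-2)/2}A^\ez\xi|\to 0$ uniformly on compact $\xi$-sets, and with the $A^\ez\to A^0$ convergence (in the relevant norm from Lemma \ref{Aconve}, hence a.e.\ after passing to a subsequence). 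A truncation/equi-integrability step, again supplied by the uniform $L^1$-bound on $V_\ez(Dv_\ez)$ and Chacon's biting lemma or Vitali's theorem, lets one pass the limit inside the integral even in the singular range $1<p<2$.

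Once $Dv_\ez\to Dv_0$ a.e.\ and $A^\ez\to A^0$ a.e., continuity of the maps $(\xi,A)\mapsto(|\sqrt{A}\xi|^2+\ez)^{(p-2)/2}A\xi$ and $(\xi,A)\mapsto(|\xi|^2+\ez)^{(p-2)/2}\xi$ — note the second does not even involve $A$ — gives $V_{A,\ez}(Dv_\ez)\to V_{A^0}(Dv_0)$ and $V_\ez(Dv_\ez)\to V(Dv_0)$ a.e.\ in $\Omega$, which is the claim; since the limit is independent of the chosen subsequence (the limit equation $\mathcal L_{A^0,0,p}v_0=g$ has a unique solution in the Dirichlet case and a unique $V$ in the Neumann case by monotonicity), the full family converges. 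The only remaining point is to verify that $v_0$ is indeed \emph{a} weak solution of $\mathcal L_{A^0,0,p}v_0=g$ with the prescribed boundary condition: this follows by passing to the limit in $\int_\Omega\langle(|\sqrt{A^\ez}Dv_\ez|^2+\ez)^{(p-2)/2}A^\ez Dv_\ez,D\varphi\rangle\,\dd x=\int_\Omega g\varphi\,\dd x$ for every test function $\varphi$, using the a.e.\ convergence of the integrand together with its equi-integrability, exactly as in the classical theory of \cite{CMaz2017}.
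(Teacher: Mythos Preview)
Your proposal is correct and follows essentially the same route as the paper: uniform energy bounds from testing with $v_\ez$, then subtracting the weak formulations tested against $v_\ez-v_0$, and using monotonicity to force strong (hence a.e.) convergence of the gradients. Two small remarks: (i) since $v_0$ is already \emph{given} as the solution of the limit problem, your preliminary extraction of a weak limit and subsequent identification step are superfluous---you can test the $v_0$-equation with $v_\ez-v_0$ immediately; (ii) the paper streamlines the ``$A^\ez$ varying'' issue you flag by passing to the intrinsic variable $\sqrt{A^\ez}Dv_\ez$, applying the standard scalar monotonicity inequality
\[
(|\xi|^2+|\eta|^2+\ez)^{\frac{p-2}2}|\xi-\eta|^2\le C(p)\big[(|\xi|^2+\ez)^{\frac{p-2}2}\xi-(|\eta|^2+\ez)^{\frac{p-2}2}\eta\big]\cdot(\xi-\eta)
\]
to $\xi=\sqrt{A^\ez}Dv_\ez$, $\eta=\sqrt{A^0}Dv_0$, and isolating the $A^\ez\!-\!A^0$ discrepancy as three explicit remainder terms, each shown to vanish by H\"older plus dominated convergence. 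This yields $\sqrt{A^\ez}Dv_\ez\to\sqrt{A^0}Dv_0$ in $L^p(\Omega)$ directly, which is slightly cleaner than your in-measure/Vitali route but equivalent in substance.
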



\begin{proof}[Proof of Theorem \ref{thm:convex}]
 We only consider the case Dirichlet $0$-boundary; the case  Neumann $0$-boundary
follows  exactly the same argument.
Let $\Omega$ be a bounded convex domain, $A\in \mathcal E_L(\Omega)$ satisfying \eqref{sobA},  and $f\in L^2(\Omega)$. Let $u\in W^{1,2}_0(\Omega)$ be the unique generalized solution
to $\mathcal L_{A,p}u=f$ in $\Omega$ with Dirichlet  $0$-boundary.
We prove \eqref{2nd-reg} as below. Note that \eqref{2nd-reg} reads as
\begin{equation}\label{2nd-reg-1}
\mbox{$V_A(Du), V(Du)\in W^{1,2}(\Omega)$
with   $ \|D V_A(Du)\|_{L^2(\Omega)}+ \|DV(Du)\|_{L^2(\Omega)}\le C\|f\|_{L^{2}(\Omega)}$}
\end{equation}

We choose $\{f^\ell\}_{\ell\in\nn}\subset C^\fz_c(\Omega)$ so that
 $\|f^\ell\|_{L^2(\Omega)}\le 2 \|f\|_{L^2(\Omega)}$ for all   $\ell$ and
 $\|f^\ell-f\|_{L^2(\Omega)}\to 0$ as $\ell\to\fz$.
 Let  $\{A^\ez\}_{\ez\in(0,1]} $ be as in Lemma \ref{Aconve}
 and  $\{\Omega_k\}_{k\in\nn}$ be as in Lemma \ref{appconv}.
Given any $\ell\in\nn,$ $\ez\in(0,1)$ and $k\in\nn$, let  $u_{(\ell,\ez,k)}$ be the smooth solution to the problem
\begin{equation}\label{AppDiri}
\mathcal L_{A^\ez,\ez,p} u_{(\ell,\ez,k)}=f^\ell\quad \textrm{in}\;\; \Omega_k;\\
     u_{(\ell,\ez,k)} =0\quad\textrm{on}\;\;\partial\Omega_k.
\end{equation}
Observe that by Lemma \ref{Aconve} and Lemma \ref{appconv}, we have
\begin{enumerate}
\item[(i)]  If $DA\in L^q(\Omega)$ for some $q>n\ge 2$,  writing $R_\sharp=C_{\rm ext,q}(\Omega)\|A\|_{L^q(\Omega)}$,  we have
$\|DA^\ez\|_{L^q(\Omega_k)}\le  R_\sharp$ for all $ k$.

\item[(ii)]  If $DA\in L^n(\Omega)$ with $n\ge 3$, let $\dz_\sharp$ be as in  Theorem \ref{thm:convex-smooth} (ii).   Then
$\Phi_{A,\Omega}(r_A)< \dz_\sharp/C_{\rm ext,n}(\Omega)$ for some $r_A>0$.
Let $r_\sharp=r_A/C_{\rm ext,n}(\Omega)$.
Then
$\Phi_{A^\ez,\Omega_k}(r_\sharp)< \dz_\sharp $ whenever $\ez<r_\sharp$ and $k$ large such that $d_H(\Omega_k,\Omega)\le r_\sharp$.
\end{enumerate}
Thanks to this, for all $\ell\in\nn$, for all sufficiently small $\ez>0$ and all sufficiently large $k$,
 we apply  Theorem \ref{thm:convex-smooth} to $u_{(\ell,\ez,k)}$ so to obtain
\begin{align}\label{estimate1}
\|D V_{A^\ez,\ez}(Du_{(\ell,\ez,k)})\|_{L^2(\Omega_k)}+ \|DV_{\ez}(Du_{(\ell,\ez,k)})\|_{L^2(\Omega_k)}&\leq C_{\star} \|f^\ell\|_{L^2(\Omega_k)} \le 2 C_{\star}\|f\|_{L^2(\Omega)}.
\end{align}
where the constant $ C_{\star}$ is as determined by Theorem \ref{thm:convex-smooth}, in particular, independent of $\ez,\ell,k$. 
From this, we conclude the desired result \eqref{2nd-reg-1}  by sending $k\to\fz$, $\ez\to0$,  and $\ell\to \fz$ in order.
The details are given as below.

\noindent {\bf Send $k\to\fz$.}  Fix any $\ell\in\nn$ and any sufficiently small $\ez>0$, from \eqref {estimate1} one deduces that
 $V_{A^\ez,\ez}(Du_{(\ell, \ez,k)})\in W^{1,2}(\Omega)$ and
$V_{\ez}(Du_{(\ell, \ez,k)})\in W^{1,2}(\Omega)$, both of which are uniform in all sufficiently large  $k$.
By  the compactness of Sobolev space, we know that
$V_{A^\ez,\ez}(Du_{(\ell,\ez,k)})$
 converges to some function $G \in W^{1,2}(\Omega)$
and
$V_{\ez}(Du_{(\ell,\ez,k)})$ converges to $\wz G$
in $L^2(\Omega)$ and weakly in $W^{1,2}(\Omega)$, and

$$\|D  G  \|_{L^2(\Omega)}+ \|D \wz G  \|_{L^2(\Omega)}\le
\liminf_{k\to\fz}[\|D V_{A^\ez,\ez}(Du_{(\ell,\ez,k)})\|_{L^2(\Omega)}+ \|D V_{\ez}(Du_{(\ell,\ez,k)})\|_{L^2(\Omega)}]\le  2C_{\star}\|f\|_{L^2(\Omega)}.$$

On the other hand, denote by
$u_{(\ell,\ez)}\in W^{1,p}_0(\Omega)$  the  weak solution to the
 equation   $ \mathcal L_{A^\ez,\ez,p} u_{(\ell,\ez)}
 =f^\ell $ in $\Omega$
  Dirichlet 0-boundary.   By Lemma 7.4 one has $V_{A^\ez,\ez}(Du_{(\ell, \ez,k)})\to  V_{A^\ez,\ez}(Du_{(\ell, \ez )})$ almost verywhere in $\Omega$ as $k\to\fz$, and also
 $V_{ \ez}(Du_{(\ell, \ez,k)})\to  V_{ \ez}(Du_{(\ell, \ez )})$ almost everywhere in $\Omega$ as $k\to\fz$.
Thus $G=
 V_{A^\ez,\ez}(Du_{(\ell, \ez )})\in W^{1,2}(\Omega)$
and  $\wz G=
 V_{A^\ez,\ez}(Du_{(\ell, \ez )})\in W^{1,2}(\Omega)$, and hence
 and
\begin{equation}\label{uellez}\|D   V_{A^\ez,\ez}(Du_{(\ell, \ez )})\|_{L^2(\Omega)}+
\|D  V(Du_{(\ell, \ez )})\|_{L^2(\Omega)} \le  2C_{\star}\|f\|_{L^2(\Omega)}.\end{equation}

\noindent {\bf Send $\ez\to\fz$.}
Given any sufficiently large $  \ell$,   denote by   $u_{(\ell)}\in W^{1,p}(\Omega)$   the unique  weak solution to the
 equation  $
\mathcal L_{A,p} u_{( \ell)}
=f^\ell $ in $\Omega$ with Dirichlet 0-boundary.
By Lemma 7.5,
$V_{A^\ez,\ez}(Du_{(\ell,\ez)})\to  V_{A }(Du_{( \ell)})$ and
$V_{\ez}(Du_{(\ell,\ez)})\to  V(Du_{( \ell)})$ almost everywhere  as $\ez\to0$ (up to some subsequence)
Thanks to this, \eqref {uellez} and  the compactness of Sobolev space $W^{1,2}(\Omega)$, we know that
$V_{A^\ez,\ez}(Du_{(\ell,\ez)})$
 converges to some function $V_{A }(Du_{( \ell)})$
and
$V_{\ez}(Du_{(\ell,\ez)})$ converges to $V (Du_{( \ell)})$
in $L^2(\Omega)$ and weakly in $W^{1,2}(\Omega)$,
and
\begin{align}\label{uell}&\|D   V_{A }(Du_{(\ell  )})\|_{L^2(\Omega)}+
\|D  V_{ \ez}(Du_{(\ell  )})\|_{L^2(\Omega)}\nonumber\\
&\le
\liminf_{\ez\to0}[\|D V_{A^\ez,\ez}(Du_{(\ell,\ez)})\|_{L^2(\Omega)}+ \|D V(Du_{(\ell,\ez)})\|_{L^2(\Omega)}]\le  2C_{\star}\|f\|_{L^2(\Omega)}.\end{align}

\noindent {\bf Send $\ell\to\fz$.}
 Sending $\ell\to\fz$,  by \cite{CMaz2017}, we have
$u_{(  \ell)} \to   u $  in $W^{1,p}(\Omega)$. This yields that
$V_{A^\ez,\ez}(Du_{(\ell )})\to  V_{A }(Du )$ almost everywhere  as $\ez\to0$ (up to some subsequence)
and
$V_{\ez}(Du_{(\ell )})\to  V(Du )$ almost everywhere  as $\ell\to0$.
By this, \eqref{uell} and   the compactness of Sobolev space,
$V_{A^\ez,\ez}(Du_{(\ell )})$
 converges to some function $V_{A }(Du )$
and
$V_{\ez}(Du_{(\ell )})$ converges to $V (Du )$
in $L^2(\Omega)$ and weakly in $W^{1,2}(\Omega)$,
and
\begin{align*}\label{u}&\|D   V_{A }(Du )\|_{L^2(\Omega)}+
\|D  V(Du )\|_{L^2(\Omega)}\nonumber\\
&\le
\liminf_{\ell\to\fz}[\|D V_{A}(Du_{(\ell )})\|_{L^2(\Omega)}+ \|D V(Du_{(\ell )})\|_{L^2(\Omega)}]\le  2C_{\star}\|f\|_{L^2(\Omega)} \end{align*}
as desired.
 \end{proof}


\begin{proof}[Proof of Theorem \ref{thm}]
Given any $\Omega$ satisfying \eqref{exx1}, let $\Omega_k$ be as in Lemma \ref{appdomain}.  Let  $\dz_\ast$ be as in Theorem 1.6 and  $C_\ast$ be as in Lemma \ref{appdomain}.
If  $\Psi_{\Omega}(C_\ast r_\ast)\le \delta_\ast/C_\ast$, then  $\Psi_{\Omega_k}( r_\ast)\le \delta_\ast $.
Following the procedure for the proof of Theorem 1.1 we will get Theorem 1.2. We omit the details.
\end{proof}

\subsection{Proofs of Lemmas \ref{appconv} and \ref{approx}}

\begin{proof}[Proof of Lemma \ref{appconv}]
{\it Case Dirichlet $0$-boundary.}  It suffices to prove $v_k\to v_\fz$ in $C^{1,\alpha}(\Omega_\fz)$ as $\ez\to0$.
Firstly, we show that,  for $k\in\nn$,
\begin{equation}\label{vez1}
  \int_{\Omega_k }|\sqrt{A^\ez }Dv_k|^p\,\dd x \leq C(n,p,L,\Omega_1)
  \int_{\Omega }(|g|^{p'}+1)\,\dd x.
\end{equation}
Indeed, since $v_k\in W^{1,p}_0(\Omega_k)$ is a weak solution to
  $\mathcal L_{A^\ez,\ez,p} v_k=g $ in $\Omega_k$ we obtain
\begin{equation} \label{vez2}
\int_{\Omega_k } (\sqrt{A  })^{-1}V_{A ,\ez}(Dv_k)\cdot \sqrt {A} Dv_k\,\dd x =
\int_{\Omega_k }  V_{A ,\ez}(Dv_k)\cdot Dv_\ez\,\dd x =\int_{\Omega_k}g  v_k\,\dd x= \int_{\Omega_\fz}g  v_k\,\dd x.
\end{equation}
 Observe that
 $$|\sqrt{A^\ez }Dv_k|^p\le (\sqrt{A  })^{-1}V_{A ,\ez}(Dv_k)\cdot \sqrt {A} Dv_k
 +C(p)\ez^{\frac p2}\quad \forall \ez\in (0,1).$$
 Thanks to this, applying Young's inequality with $\eta\in(0,1)$   we have
\begin{equation}\label{vezz}
  \int_{\Omega_k }|\sqrt{A^\ez }Dv_k|^p\,\dd x \leq \frac4{\eta}\int_{\Omega }|g|^{p'}\,\dd x+\eta\int_{\Omega_k}|v_k|^p\,\dd x+C(p,\Omega_1).
\end{equation}
Since $v_k\in W^{1,p}_0(\Omega_1)$, where  $v_k$ is extended to the Lipchitz domain $\Omega_1$ by setting $v_k= 0$ on $\Omega_1\setminus \Omega_k$,
by Poincar\'e's inequality we have
\begin{equation} \label{vez3}\int_{\Omega_k}|v_k|^p\,\dd x=\int_{\Omega_1}|v_k|^p\,\dd x \le C(n,p,\Omega_1)\int_{\Omega_1}|D v_k|^p\,\dd x =  C(n,p,\Omega_1)\int_{\Omega_k}|D v_k|^p\,\dd x. \end{equation}
Since  $\frac1L\leq\sqrt{A^\ez }\leq L $,   choosing $\eta$ smooth enough so that
$$C(n,p,\Omega_1)\eta\int_{\Omega_k  }|Dv_k|^p\,\dd x\le \frac12 \int_{\Omega_k }|\sqrt{A^\ez }Dv_k|^p\,\dd x.$$
From this and \eqref{vezz} we conclude  \eqref{vez1}.

On the other hand, it is well-known that,  there exists $\alpha>0$ such that,
for  any smooth subdomain $ U\Subset\Omega_\fz$,
 $v_k\in C^{1,\alpha}(\overline U)$  uniformly in  all $k\in\nn$.   Moreover, observe that $\Omega_k$ satisfies the uniform regular condition uniformly in $k$, that is,
$$|B(x,r)\setminus \Omega|\ge c|B(x,r)|\quad\forall x\in \partial\Omega_k,\forall k\in\nn$$
for some constant $c>0$. There exists some $\bz>0$ such that $v_k\in C^{0,\bz}(\Omega_k)$ with the norm $\sup_{k\in\nn}\|v_k\|_{C^{0,\bz}(\overline{\Omega_k})}<\fz$; see    \cite{LU1,LadU68,Tr}.
Since $v_k|_{\partial \Omega_k}=0$ and $d_H(\Omega,\Omega_k)\to0$ we know that $v_k|_{\partial\Omega}\to0$ uniformly as $\nn\ni k\to\fz$.
Thus
We can find  a function $v\in C^{0,\bz}(\overline\Omega_\fz)\cap C^{1,\alpha}(\Omega_\fz)$ such that
  $v_k\to v $ in $C^{0,\bz}(\overline\Omega)$ and $Dv_k\to Dv$ in $ C^{0,\alpha}(\Omega_\fz)$      as $ \nn\ni k\to\infty$ (up to some subsequence).
Consequently, one has
$v_\fz|_{\partial\Omega}=0$ and $v \in C^{0,\bz}(\overline \Omega)$, and moreover,
$\frac1{r^n}\int_{B(x,r)\cap\Omega}|v |\,\dd x=0$ as $r\to0$ for all $x\in\partial\Omega$. By \cite{SZ} and $v\in W^{1,p}(\Omega_\fz)$ one conclude  $v \in W^{1,p}_0(\Omega)$.

 Next,  as $k\to\fz$, since $Dv_k\to Dv$ in $ C^{0,\alpha}(\Omega_\fz)$,  we have    $V_{A ,\ez}(Dv_k)\to V_{A,\ez}(Dv)
$  in $C^{0,\alpha}(\Omega_\fz)$.
Thus for any $\phi\in C^\fz_c(\Omega_\fz)$, it follows that
\begin{align*}\int_{\Omega_\fz} V_{A^\ez,\ez}(Dv)\cdot D\phi\,\dd x&=\lim_{k\to\fz}\int_{\Omega_\fz}
V_{A^\ez,\ez}(Dv_k)\cdot D\phi\,\dd x\\
&= \lim_{k\to\fz}\int_{\Omega_k}
V_{A^\ez,\ez}(Dv_k)\cdot D\phi\,\dd x
  = \lim_{k\to\fz}\int_{\Omega_k} g \phi\,\dd x
 =\int_{\Omega_\fz} g \phi\,\dd x.
\end{align*}
Observe that \eqref{vez1} implies that
  $V_{A,\ez} (Dv_k)\in L^{p'}(\Omega_\fz)$ uniformly in $k\in\nn$.
By a density argument one has
\begin{align*}\int_{\Omega_\fz} V_{A^\ez,\ez}(Dv)\cdot D\phi\,\dd x&
 =\int_{\Omega_\fz} g\phi\,\dd x \mbox{ for all $\phi\in W^{1,p'}_0(\Omega_\fz) $, }
\end{align*}
 that is,  $\mathcal L_{A^\ez,\ez,p} v=g$  in $ \Omega_\fz$ in weak sense.
By the uniqueness of solutions to
 the equation $\mathcal L_{A^\ez,\ez,p} v=g$  in $ \Omega_\fz$ with Dirichlet $ 0$-boundary,
  we have $v=v_\fz$ as desired.

{\it Case Neumann $0$-boundary.}  In this case we may assume in addition  that $\int_{\Omega_k} v_k\,\dd x=0$.
 It suffices to prove $v_k\to v_\fz$ in $C^{1,\alpha}(\Omega_\fz)$ as $\ez\to0$.   Firstly we show that  \eqref{vez1} also holds with some constant $C$ independent of $k$. The proof is much similar to the case Dirichlet $0$-boundary.
We sketch it. First, since $v_k\in W^{1,p}(\Omega)$ is a weak solution to $\mathcal L_{A,\ez,p}v_k=g$ in $\Omega_k$ with Neumann $0$-boundary,
one also has  \eqref{vez2}, and then gets \eqref{vezz}.
Thanks to the assumption $\int_{\Omega_k} v_k\,\dd x=0$ in this case, we could apply the Sobolev-Poincar\'e inequality to get  \eqref{vez3}
with the constant uniformly in $k$, where   note that  $\partial \Omega_k$ are uniform Lipschitz  and has uniform bounded diameters.
We then  choose small $\eta$ to get the desired result.

 Next, it is well-known that,  there exists $\alpha>0$ such that,
for  any smooth subdomain $ U\Subset\Omega_\fz$,
 $v_k\in C^{1,\alpha}(\overline U)$  uniformly in  all $k\in\nn$.
Noting the assumption $\int_{\Omega_k} v_k\,\dd x=0$ in this case,
 we can find  a function $v\in C^{1,\alpha}(\Omega_\fz)$ such that
  $v_k\to v $ and $Dv_k\to Dv$ in $ C^{0,\alpha}(\Omega_\fz)$      as $ \nn\ni k\to\infty$ (up to some subsequence).
In particular, $V_{A ,\ez}(Dv_k)\to V_{A,\ez}(Dv)
$  in $C^0(\Omega_\fz)$  as $ k\to\infty$. In particular $V_{A^\ez,\ez}(Dv_k)\to V_{A^\ez,\ez}(Dv)$ in $C^{0,\alpha}(\Omega_\fz)$.

Moreover, given any $\phi\in W^{1,\fz} (\Omega_\fz)$ we extend it to be a function $\wz \phi\in W^{1,\fz}(\rn)$.
Let $U_m\Subset U_{m+1}\Subset\Omega_\fz$ with $d_H(\Omega_\fz,U_m)\to 0$ as $m\to \fz$. One has
\begin{align*}\int_{\Omega_\fz} V_{A^\ez,\ez}(Dv)\cdot D\phi\,\dd x
&= \lim_{m\to\fz} \int_{U_m} V_{A^\ez,\ez}(Dv )\cdot D\phi\,\dd x&\\
&= \lim_{m\to\fz}\lim_{k\to\fz}
\int_{U_m} V_{A^\ez,\ez}(Dv )\cdot D\phi\,\dd x
\\
&= \lim_{m\to\fz}\lim_{k\to\fz}
[\int_{\Omega_k} V_{A^\ez,\ez}(Dv_k)\cdot D\wz \phi\dd x-\int_{\Omega_k\setminus U_m} V_{A^\ez,\ez}(Dv_k)\cdot D\wz \phi\,\dd x].
\end{align*}
Observe that
\begin{align*}
\int_{\Omega_k} V_{A^\ez,\ez}(Dv_k)\cdot D\wz \phi\dd x
&=\int_{\Omega_k}g\wz \phi \dd x=\int_{\Omega_\fz} g  \phi\,\dd x.
\end{align*}
Since
 $\|\sqrt ADv_k \|_{L^p(\Omega_k)}\le C\|g\|_{L^{p'(\Omega_\fz)}}^{p'/p}$ for all $k\in\nn$, one has
\begin{align*}
\Big|\int_{\Omega_k\setminus U_m } V_{A^\ez,\ez}(Dv_k)\cdot D\phi\dd x\Big|
&\le \|D\phi\|_{L^\fz(\rn)} \| V_{A^\ez,\ez}(Dv_k)\|_{L^1(\Omega_k\setminus U_m)}\\
&\le
\|D\phi\|_{L^\fz(\rn)}\|V_{A^\ez,\ez}(Dv_k)\|_{L^{p'}(\Omega_k)}|\Omega_k\setminus U_m|^{1/p}\to0
\end{align*}
We therefore get
\begin{align*}\int_{\Omega_\fz} V_{A^\ez,\ez}(Dv)\cdot D\phi\,\dd x  =\int_{\Omega_\fz} g\phi\,\dd x.
\end{align*}
Since $W^{1,\fz}(\Omega_\fz)$ is dense in $W^{1,p'}(\Omega_\fz)$,
and $V_{A.\ez}(Dv_k)\in W^{1,p}(\Omega_\fz)$ uniformly in $k$,
 we know that this holds for all $\phi \in W^{1,p}(\Omega)$. Thus  $v$ is a weak solution to
$\mathcal L_{A,\ez,p} v=g$ in $\Omega_\fz$  with  Neumann $0$-boundary.

To see get $v=v_\fz$, it then suffices to show that
 $\int_{ \Omega_\fz}v_\fz\,\dd x=0$. To see this, write
$$
\int_{\Omega_\fz}v_\fz\,\dd x=\lim_{k\to\fz} \int_{\Omega_\fz}v_k\,\dd x=
\lim_{k\to\fz} \Big[\int_{\Omega_k}v_k\,\dd x+  \int_{\Omega_k\setminus \Omega_\fz}v_k\,\dd x\Big]$$
By $\int_{\Omega_k}v_k\,\dd x=0$ and the H\"older inequality, one has
$$
\Big|\int_{\Omega_\fz}v_\fz\,\dd x\Big| =
\lim_{k\to\fz}  \Big| \int_{\Omega_k\setminus \Omega_\fz}v_k\,\dd x\Big|\le  \liminf_{k\to\fz} \|v_k\|_{L^p(\Omega_k)}| \Omega_k\setminus \Omega_\fz|^{1/p'}. $$
By $\int_{\Omega_k}v_k\,\dd x=0$ again and the Sobolev-Poincar\'e inequality, one has $$\|v_k\|_{L^p(\Omega_k)}\le C  \|Dv_k\|_{L^p(\Omega_k)}\le C\|g\|_{L^{p'}(\Omega_\fz)}^{p'/p}.$$
Since $|\Omega_k\setminus \Omega_\fz|\to0$ as $\nn\ni k\to\fz$, we have
$\int_{\Omega_k }v_k\,\dd x=0$ as desired.
\end{proof}

\begin{proof}[Proof of Lemma \ref{approx}]
It suffices to prove $\sqrt {A^\ez}Dv_\ez \to \sqrt A D v_0$ in  $L^p(\Omega)$  as $\ez\to0$.

In the case Dirichlet $0$-boundary, similarly to the proof of Lemma \ref{appconv}, for any $\ez\in(0,1)$ one has
\begin{equation}\label{vezz1}
  \int_{\Omega  }|\sqrt{A^\ez }Dv_\ez|^p\,\dd x \leq C(n,p,L,\Omega )\int_{\Omega}(|g|^{p'}+1)\,\dd x.
\end{equation}
%
Since
 $v_{\ez}-v_0\in W^{1,p}_0(\Omega)$,    one has
\begin{equation}\label{vezz2}
 \int_{\Omega } V_{A^\ez,\ez}(Dv^\ez)\cdot\left( D v_\ez- D v_0\right)\,\dd x
  =\int_{\Omega }g(v^\ez-v_0)\,\dd x, 
\end{equation}
and using $v_{\ez}-v_0$  to replace $v_0$ one also has
\begin{equation}\label{vezz3}
 \int_{\Omega } V_{A^0,0}(Dv_0)\cdot\left( D v_\ez - D v_0 \right)\,\dd x
  =\int_{\Omega }g(v^\ez-v_0)\,\dd x.
\end{equation}
In the case Neumann $0$-boundary, we may further assume that  $\int_\Omega v_\ez\,\dd x=0$ for $\ez\in[0,1]$. Similarly to the proof of Lemma \ref{appconv},  one also has
\eqref{vezz1}, and then \eqref{vezz2} and \eqref{vezz3}.

From
 \eqref{vezz2} and \eqref{vezz3}, it follows that
$$\int_{\Omega } (\sqrt{A^\ez })^{-1}V_{A^\ez,\ez}(Dv_\ez)\cdot (  \sqrt{A^\ez } Dv_\ez-
 \sqrt{A^\ez }  Dv_0 )\,\dd x
  =\int_\Omega  (\sqrt{A^0  })^{-1}V_{A^0}(Dv_0)\cdot ( \sqrt {A^0} Dv_\ez- \sqrt {A^0}Dv_0 )\,\dd x,$$
 and hence
\begin{align*}
&\int_{\Omega } (\sqrt{A^\ez })^{-1}V_{A^\ez,\ez}(Dv_\ez)\cdot (  \sqrt{A^\ez } Dv_\ez-
 \sqrt{A^0}  Dv_0 )\,\dd x \\
&=  \int_{\Omega } (\sqrt{A^\ez })^{-1}V_{A^\ez,\ez}(Dv_\ez)\cdot (  \sqrt{A^\ez }  Dv_0 - \sqrt{A^0 } Dv_0
)\,\dd x\\
&\quad+ \int_{\Omega } (\sqrt{A^0  })^{-1}V_{A^0}(Dv_0)\cdot ( \sqrt {A^0} Dv_\ez- \sqrt {A^0}Dv_0 )\,\dd x.
%
\end{align*}
Moreover, adding both sides with
$$- \int_{\Omega }
 (\sqrt{A^0  })^{-1}V_{A ^0,\ez}(Dv_0)   (\sqrt{A^\ez } D v_\ez-\sqrt{A^0}Dv_0  )\,\dd x$$
we further have
\begin{align}\label{conver-1}
  I&= \int_{\Omega } [(\sqrt{A^\ez })^{-1}V_{A^\ez,\ez}(Dv_\ez)- (\sqrt{A^0  })^{-1}V_{A ^0,\ez}(Dv_0)]\cdot (  \sqrt{A^\ez } Dv_\ez-
 \sqrt{A^0}  Dv_0 )\,\dd x  \nonumber\\
   &= \int_{\Omega } (\sqrt{A^\ez })^{-1}V_{A^\ez,\ez}(Dv_\ez)\cdot (  \sqrt{A^\ez }  Dv_0 - \sqrt{A^0 } Dv_0
)\,\dd x\nonumber\\
&\quad+ \int_{\Omega }[ (\sqrt{A^0  })^{-1} V_{A^0,0}(Dv_0)-   (\sqrt{A^0  })^{-1}V_{A^0 ,\ez}(Dv_0)  ]\cdot\left(\sqrt{A^0}Dv_\ez-\sqrt{A^0}Dv_0\right)\,\dd x\nonumber\\
  &\quad+\int_{\Omega }  (\sqrt{A^0  })^{-1}V_{A^0 ,\ez}(Dv_0)\cdot(\sqrt{A^0 } Dv_\ez-\sqrt{A^\ez} Dv_\ez)\,\dd x\nonumber\\
&=J_1+J_2+J_3.
\end{align}

Now we show that   $I\to0$ as $\ez\to0$.  Indeed,
by the H\"older inequality and \eqref{vezz1},
\begin{align*}
|J_1|
&\le  \|(\sqrt{A^\ez })^{-1}V_{A^\ez,\ez}(Dv_\ez)\|_{L^{p'}(\Omega )}\|(\sqrt {A^\ez}Dv_\ez -\sqrt {A^0})Dv_0   \|_{L^p(\Omega )}\\
& \le  C[1+\| g\|_{L^{p'}(\Omega )}^{p'/p}]  \|(\sqrt {A^\ez}-\sqrt {A^0})Dv_0\|_{L^p(\Omega )}.
\end{align*}
Thanks to $A^\ez\to A^0$ almost everywhere and $A^\ez\in\mathcal E_L(\Omega )$, this yields that
  $J_3\to 0$ as $\ez\to0$.

By the H\"older inequality and \eqref{vezz1}, one has
\begin{align}|J_2|
\leq&C \| g\|_{L^{p'}(\Omega )}^{p'/p}
\| (\sqrt{A^0  })^{-1} V_{A^0,0}(Dv_0)-   (\sqrt{A^0  })^{-1}V_{A^0 ,\ez}(Dv_0)\|_{L^{p'} (\Omega )}.
\end{align}
Observing
\begin{equation}\label{control}
 |(\sqrt{A^0  })^{-1}V_{A^0 ,\ez}(Dv_0)|^{p'} \le |\sqrt {A^0}Dv_0|^p+1
\end{equation}
and $V_ {A^0 ,\ez}(Dv_0) \to V_{A^0 ,0}(Dv_0)$ almost everywhere,
thanks to \eqref{vezz1} we have   $J_2\to0$ as $\ez\to 0$.
By the H\"older inequality again, one has
\begin{align*}
|J_3|& =
\int_{\Omega } [|\sqrt {A^0} Dv_0|^2+\ez]^{\frac{p-2}2}  [(\sqrt{A ^0} - \sqrt{A ^\ez})\sqrt{A ^0}   Dv_0]\cdot Dv_\ez \,\dd x\\
&\le  \||\sqrt {A^0} Dv_0|^2+\ez]^{\frac{p-2}2}  [(\sqrt{A^0 } -\sqrt{A^\ez} )\sqrt {A^0} Dv_0\|_{L^{p'}(\Omega )}\|Dv_\ez\|_{L^p(\Omega )}.
\end{align*}
By \eqref{vezz1} and \eqref{control},  noting $A^\ez, A\in\mathcal E_L(\rn)$ and $A^\ez\to A$ a.\,e. we know that
$J_3\to0$ as $\ez\to0$ as desired.

On the other hand, recall that
$$
(|\xi|^2+|\eta|^2+\ez)^{\frac{p-2}2}|\xi-\eta|^2\le C(p)[(|\xi|^2+\ez)^{\frac{p-2}2}\xi- (|\eta|^2+\ez)^{\frac{p-2}2}\eta]\cdot (\xi-\eta)\quad\forall\xi,\eta\in\rn.
 $$
Applying this to $\xi= \sqrt{A^\ez }Dv_\ez$ and $\eta= \sqrt{A }Dv $ we have
$$ \int_{\Omega }\left(|\sqrt{A^\ez }Dv_\ez|^2+|\sqrt{A}Dv|^2+\ez\right)^{\frac{p-2}{2}}|\sqrt{A^\ez }Dv_\ez-\sqrt{A}Dv|^2\,\dd x\le I\to 0.$$
If $p\ge 2$, this obviously yields
 $\|\sqrt {A^\ez}Dv_\ez-\sqrt ADv\|_{L^p(\Omega )}\to0.$
   If $1<p<2$,  by Holder's inequality,
\begin{align}
  &\int_{\Omega }|\sqrt{A^\ez }Dv_\ez-\sqrt{A}Dv|^p\,\dd x\nonumber\\
  \leq&\left(\int_{\Omega }\left(|\sqrt{A^\ez }Dv_\ez|^2+|\sqrt{A}Dv|^2+\ez\right)^{\frac{p-2}{2}}|\sqrt{A^\ez }\nabla u_\ez-\sqrt{A}Dv|^2\,\dd x\right)^{\frac{p}{2}}\nonumber\\
  &\left(\int_{\Omega }(|\sqrt{A^\ez }Dv_\ez|^2+|\sqrt{A}Dv|^2+\ez)^{\frac{p}{2}}\,\dd x\right)^{\frac{2-p}{2}},
\end{align}
which converges to $ 0$
 as $\ez\to0$.

By this, noting $A^\ez, A\in\mathcal E_L(\rn)$ and $A^\ez\to A$ a.\,e.,write   $$|Dv_\ez- Dv| \le L |\sqrt {A^\ez}Dv_\ez-\sqrt ADv|+ L|(\sqrt {A^\ez}-\sqrt A)Dv|,$$
one has
$\| Dv_\ez- Dv\|_{L^p(\Omega )}\to0.$
\end{proof}

\renewcommand{\thesection}{Appendix A}
 \renewcommand{\thesubsection}{ A }
\newtheorem{lemapp}{Lemma \hspace{-0.15cm}}
\newtheorem{corapp}[lemapp] {Corollary \hspace{-0.15cm}}
\newtheorem{remapp}[lemapp]  {Remark  \hspace{-0.15cm}}
\newtheorem{defnapp}[lemapp]  {Definition  \hspace{-0.15cm}}
\renewcommand{\theequation}{A.\arabic{equation}}

\renewcommand{\thelemapp}{A.\arabic{lemapp}}

\section{Proof of Lemma 7.1}

 To prove lemma 7.1, given any    bounded uniform domain  $\Omega$, below we briefly recall the construction of extension operator $\Lambda: \dot W^{1,q} (\Omega)\to \dot W^{1,q} (\rn)$ by
  Jones \cite{j81} (see also \cite{KZZ}).   For $1\le q<\fz$, denote by $\dot W^{1,q} (\Omega)$  the homogeneous Sobolev space in any domain  $\Omega\subset\rn$, that is, the collection of all function $v\in L^q_\loc(\Omega)$
with its distributional derivative $Dv\in L^q(\Omega)$.

Recall that $\Omega$  is an  $\ez_0
$-uniform domain for some $\ez_0>0$ if
for any $x,y\in\Omega$ one can find a rectifiable curve $\gz:[0,T] \to\Omega$ joining $x,y$ so that
$$\mbox{ $T=\ell(\gz) \le \frac1{\ez_0} |x-y|$ and
$ \dist(\gz(t),\partial\Omega)\ge \ez_0\min\{t,T-t\}\quad\forall t\in[0,T]$}, $$
where $C$ is a constant.
Note that $|\partial\Omega|=0$.   It is well-known that Lipschitz domains are always $\ez_0$-uniform domains,
where $\ez_0$ depends on Lipschitz constant of $\Omega$.
In the case $\Omega$ is convex, $\ez_0$ depends on $\diam\Omega$ and $|\Omega|$.

  Denote by $W_1=\{S_j\}$   the Whitney decomposition of $\Omega$ and $W_2=\{Q_j\}$ as the Whitney decomposition of $(\overline\Omega)^\complement$
as   \cite[Section 2]{KZZ}.
Set also $W_3=\{Q\in W_2, \ell(Q)\le \frac{\ez_0}{16n}\diam\Omega \}$ as \cite[Section 2]{KZZ}.
By Jones and also \cite{KZZ},   any cube $Q\in W_3$  has a reflection cube $Q^\ast\in W_1$ such that 
 $ \ell (Q)\le \ell(Q ^\ast)\le 4\ell (Q)$ and hence   $\dist(Q^\ast,Q)\le C\ell(Q)$ for some constant $C\ge 1$ depending only on $\ez_0$ and $n$.
For any $Q\in W_2\setminus W_3$ we just write $Q^\ast=\Omega$.

Let  $\{\vz_Q\}_{Q\in W_2}$ be a  partition of unit associate to  $W_2$  so that ${\rm supp}\vz_Q\subset\frac{17}{16}Q$.
The  extension operator is then defined by
$$\Lambda v(x)=\left\{\begin{split} &
\sum_{Q\in W_2}(\bint_{Q^\ast}v\,dz)\vz_{Q}&\quad\forall x\in(\overline\Omega)^\complement\\
&\liminf_{r\to0} \bint_{B(x,r)\cap\Omega} v\,dz&\quad\forall x\in\partial \Omega\\
 &v(x) &\quad\forall x\in \Omega\\
\end{split}
\right.$$
Such extension operator is a slight modification of that in \cite{KZZ} and also \cite{j81}.

By some slight modification of the argument by Jones \cite{j81} (see also \cite{KZZ}),   for   $1\le q<\fz$     one has that
$\Lambda :\dot W^{1,q} (\Omega)\to \dot W^{1,q} (\Omega)$ is a   linear bounded
extension  operator, that is,  for any $v\in W^{1,q}(\Omega)$
we have $\Lambda  v\in \dot W^{1,q}(\rn)$ so that $ \Lambda v|_\Omega=v$ and $\|D \Lambda  v\|_{L^q(\rn)}\le C\|Dv\|_{L^q(\Omega)}$
for some $C$ depending on $n,\ez_0$ and $q$.

Moreover, by some slight modification of arguments in \cite{KZZ},
for any $x\in\overline\Omega$ and  $r\le \frac{\ez_0}{16n}\diam\Omega$,
   one has $\|D \Lambda v\|_{L^n(B(x,r))}\le C\|Dv\|_{L^n(\Omega\cap B(\bar x,Cr))}$ .
Indeed,  the choice of
$r$ implies that  $B(x,r)\cap Q=\emptyset$ for any $Q\in W_2\setminus W_3$  and hence one only need to
bound ${\bf H}_{1,1}$ in \cite[P.1422]{KZZ}  and ${\bf H}_{1,2}=0$ and ${\bf H}_2=0$   in \cite[P.1422]{KZZ}.
Thus $\|D \Lambda v\|_{L^n(B(x,r)\setminus \Omega)}\le C\|Dv\|_{L^n(\Omega\cap B(\bar x,Cr))}$.
Moreover, for any  $x\notin\overline\Omega$,  denote by  $\bar x\in\partial \Omega$ is the nearest point of $x$.
If   $\dist(x,\partial\Omega)<r<\diam\Omega$,
one has $$\|D \Lambda v\|_{L^q(B(x,r))}\le \|D \Lambda v\|_{L^q(B(\bar x,2r))} \le C\|Dv\|_{L^q(\Omega\cap B(\bar x,Cr))}.$$


%


\begin{proof}[Proof of Lemma 7.1.]

Let
  $A=(a_{ij})\in \mathcal E_L(\Omega)$ with $DA\in L^q(\Omega)$ with $q\ge n$. Write  $\wz A=(\Lambda a_{ij})$.
By the boundedness of $\Lambda$, we have $\|D\wz A\|_{L^q(\rn)}<C\|DA\|_{L^q(\Omega)}$. Noting
$$
\langle \wz A(x)\xi, \xi\rangle= \sum_{Q\in W_2}(\bint_{Q^\ast}\langle  A(z)\xi,\xi\rangle\,dz) \vz_Q(x)$$
we know that $ \wz A\in \mathcal E_L (\rn)$.
Moreover  in the case $\|DA\|_{L^n(\Omega)}<\fz$,  we have
$ \Phi_{\wz  A}(\Omega_\eta,r)\le C\Phi_{ A}(\Omega, Cr)$ whenever $x\in\Omega_\eta$ and
$0< \eta<r<\diam\Omega$.

For $\ez>0$, $A^\ez= \wz A\ast\eta_\ez$, where $\eta_\ez$ is  the standard smooth  mollifier.
Since $\langle \wz A\ast  \eta (x)\xi, \xi\rangle=\langle \wz A \xi, \xi\rangle \ast\eta (x)$, we know that
$A^\ez\in\mathcal E_L(\rn)$.
 Moreover,
in the case $\|DA\|_{L^n(\Omega)}<\fz$,  we have $\|\wz A\ast \eta_\ez\|_{L^n(B(x,r))}\le \|\wz A \|_{L^n(B(x,r+\ez))}$.
For any $x\in\Omega_\eta$ and
$0< \ez\le \eta <r<\diam\Omega$, we know that  $ \Phi_{ A^\ez }(\Omega_\eta,r)\le   \Phi_{ \wz A   }(\Omega_{\ez+\eta},r+\ez) $.
and hence $ \Phi_{ A^\ez}(\Omega_\eta,r)\le C\Phi_{ A}(\Omega, Cr)$  as desired.
\end{proof}

\noindent Fa Peng,

\noindent
Academy of Mathematics and Systems Science, the Chinese Academy of Sciences, Beijing 100190, P. R. China

\noindent{\it E-mail }:  \texttt{fapeng@amss.ac.cn}

\bigskip

\noindent Qianyun Miao,

\noindent
School of Mathematics and Statistics, Beijing Institute of Technology, Beijing 100081, P. R. China.

\noindent{\it E-mail }:  \texttt{qianyunm@bit.edu.cn}
\bigskip

\noindent Yuan Zhou

\noindent School of Mathematical Sciences, Beijing Normal University, Haidian District Xinejikou Waidajie No.19, Beijing 100875, P. R. China

\noindent {\it E-mail }:
\texttt{yuan.zhou@bnu.edu.cn}



\begin{thebibliography}{99}

\vspace{-0.3cm}
\bibitem{A92}
V. Adolfsson,
$L^2$-integrability of second-order derivatives for Poisson's equation in nonsmooth domains.
Math. Scand. 70 (1992),  146-160.

\vspace{-0.3cm}
\bibitem{ADN59}
S. Agmon, A. Douglis and L. Nirenberg, {\em Estimates near the boundary for solutions of elliptic partial differential equations satisfying general boundary conditions. I.}
Commun. Pure Appl. Math. \textbf{12} (1959), 623--727.



\vspace{-0.3cm}
\bibitem{Bern04}
S. Bernstein, {\em Sur la nature analytique des solutions des \'equations aux d\'eriv\'ees partielles du second ordre}.
Math. Ann. \textbf{59} (1904), 20--76.

\vspace{-0.3cm}
\bibitem{bcdm}
A. K. Balci, A. Cianchi, L. Diening and V. Maz'ya,
{\em
A pointwise differential inequality and second-order regularity for nonlinear elliptic systems},
Mathematische Annalen (2021).


\vspace{-0.3cm}
\bibitem{CMaz11}
A. Cianchi and V. G. Maz'ya,
{\em Global Lipschitz regularity for a class of quasilinear elliptic equations}.
Comm. Partial Differential Equations \textbf{36} (2011), no. 1, 100--133.


\vspace{-0.3cm}
\bibitem{CMaz2017}
A. Cianchi and V. G. Maz'ya,
{\em Quasilinear elliptic problems with general growth and merely integrable, or measure, data}.
Nonlinear Anal.  {\bf 164} (2017), 189--215.

\vspace{-0.3cm}
\bibitem{CMaz}
A. Cianchi and V. G. Maz'ya,
{\em Second-order two-sided estimates in nonlinear elliptic problems}.
Arch. Rational Mech. Anal {\bf 229} (2018), 569-599.


\vspace{-0.3cm}
\bibitem{CMaz2019}
A. Cianchi and V. G. Maz'ya,
{\em Optimal Second-order regularity for the p-Laplace system}.
J. Math. Pures Appl. {\bf 132} (2019), 41-78.

\vspace{-0.3cm}
\bibitem{Grisvard}
P. Grisvard,
{\em Elliptic problems in nonsmooth domains}.
Pitman, Boston, (1985).

\vspace{-0.3cm}
\bibitem{Horm63}
L. H\"ormander, {\em Linear Partial Differential Operators}. Springer, Berlin, (1963).

\vspace{-0.3cm}
\bibitem{j81}
P. W. Jones, {\em Quasiconformal mappings and extendability of functions in Sobolev spaces.} Acta Math. {\bf 147} (2018), 71-88.


\vspace{-0.3cm}
\bibitem{KZZ}
P. Koskela, Y. Zhang and Y. Zhou, {\em Morrey-Sobolev extension domains.} J. Geom. Anal. {\bf 27} (2017),   1413-1434.

\vspace{-0.3cm}
\bibitem{LU1}
 O. A. Ladyzenskaya and N. N. Ural'ceva, {\em Quasilinear elliptic equations and variational problems
with many indepedent variables}.  Usp.Mat.Nauk.16(1961),19-92(Russian);English translation in Russian Math.Surveys 16(1961),17-91.





\vspace{-0.3cm}
\bibitem{LadU68}
O. A. Ladyzenskaya and N. N. Ural'ceva, {\em Linear and quasilinear elliptic equations}. Academic Press, New York, (1968).

\vspace{-0.3cm}
\bibitem{LGM}
Lieberman, G. M.,
{\em The natural generalization of the natural conditions of Ladyzenskaya and Ural'ceva for elliptic equations}.
Commun. Partial. Differ. Equ.  {\bf 16} (1991), 311--361.

\vspace{-0.3cm}
\bibitem{Maz1962}
 V. G. Maz'ya,
{\em The negative spectrum of the higher-dimensional Schr\"{o}dinger operator}.
Dokl. Akad. Nauk SSSR  {\bf 144} (1962)(Russian), 721-722. English translation: Sov. Math. Dokl. {\bf 3} (1962)

\vspace{-0.3cm}
\bibitem{Maz1964}
 V. G. Maz'ya,
{\em On the theory of the higher-dimensional Schr\"{o}dinger operator}.
Izv. Akad. Nauk SSSR Ser. Mat.  {\bf 28} (1964)(Russian), 1145--1172.

\vspace{-0.3cm}
\bibitem{Horm63}
L. H\"ormander, {\em Linear Partial Differential Operators}. Springer, Berlin, (1963).

\vspace{-0.3cm}
\bibitem{HorJ}
R. A. Horn and Ch. R. Johnson,
{\em Matrix Analysis}. Cambridge University Press (1985).



\vspace{-0.3cm}
\bibitem{MazS09}
V. G. Maz'ya and T. O. Shaposhnikova, {\em Theory of Sobolev Multipliers. With Applications to Differential and Integral Operators}. Springer, Berlin, (2009).

\vspace{-0.3cm}
\bibitem{SZ} D. Swanson and W. P. Ziemer, {\em Sobolev functions whose inner trace at the boundary is zero}. Ark. Mat., {\bf 37} (1999), 373-380.

\vspace{-0.3cm}
\bibitem{Shaud34}
J. Schauder, {\em Sur les \'equations lin\'eaires du type elliptique a coefficients continuous}.
C. R. Acad. Sci. Paris \textbf{199} (1934), 1366--1368.

\vspace{-0.3cm}
\bibitem{TG}
G. Talenti,
{\em Nonlinear elliptic equations, rearrangements of functions and Orlicz spaces}.
Ann. Math. Pura Appl.  {\bf 120} (1979), 159-184.


\vspace{-0.3cm}
\bibitem{Tr}
N. S. Trudinger, {\em On harnack type inequalities and their application to quasilinear elliptic equations}. Comm. Pure. Appl. Math.
\textbf{20} (1967), 721-747.


%
%
%
%
%
%
%
%
%
%
%
%
%
%
%
%
%
%
%
%
%
%
%
%
%
%
%
%
%
%
%
%

%
%
%
%
%
%
%
%
%
%
%
%
%
%
%
%
%
%
%
%
%
%
%
%
%
%
%
%

%
\end{thebibliography}
\end{document}